\theoremstyle{definition}
\newtheorem{definition}{Definition}[section]
\newtheorem{remark}[definition]{Remark}
\theoremstyle{plain}
\newtheorem{theorem}[definition]{Theorem}
\newtheorem{proposition}[definition]{Proposition}
\newtheorem{lemma}[definition]{Lemma}
\newtheorem{corollary}[definition]{Corollary}
\newtheorem{claim}[definition]{Claim}
\newtheorem{problem}[definition]{Problem}
\numberwithin{equation}{section}
\title[RAAGs in mapping class groups]{The RAAGs on the complement graphs of path graphs in mapping class groups}
\author[T.~Katayama]{Takuya Katayama}
\address{
(Takuya Katayama)
Department of Mathematics, Graduate School of Science, 
Hiroshima University, 
1-3-1 Kagamiyama, Higashi-Hiroshima, Hiroshima 739-8526, Japan 
}
\email{tkatayama@hiroshima-u.ac.jp}
\author[E.~Kuno]{Erika Kuno}
\address{
(Erika Kuno)
Department of Mathematics, Graduate School of Science, Osaka University,
Machikaneyama 1-1, Toyonaka, Osaka 560-0043, Japan}
\email{e-kuno@math.sci.osaka-u.ac.jp}
\date{\today}
\keywords{Mapping class group; right-angled Artin group; Birman--Hilden theory; embeddability} 
\subjclass[2010]{20F36, 20F38, 57M12}
\begin{document}

\sloppy

\begin{abstract}
In this article, we determine the function $\ell(S_{g, p})$ such that the right-angled Artin group $G(P_{m})$ is embedded in the mapping class group $\mathrm{Mod}(S_{g, p})$ if and only if $m$ is not more than $\ell(S_{g, p})$. 
Using this function and Birman--Hilden theory, we prove that $\mathrm{Mod}(S_{0, p})$ is virtually embedded in $\mathrm{Mod}(S_{g, 0})$ if and only if $p \leq 2g+2$. 
\end{abstract}

\maketitle

\section{Introduction \label{Introduction_section}}

Let $\Gamma$ be a finite graph without loops and multi-edges. 
We denote by $V(\Gamma)$ and $E(\Gamma)$ the vertex set and the edge set of $\Gamma$, respectively. 
The {\it right-angled Artin group} on $\Gamma$ is defined by the following group presentation: 
\begin{align*}
G(\Gamma)=\left\langle V(\Gamma)\mid \left[v_{i}, v_{j}\right]=1 {\rm ~if~and~only~if}~\{v_{i}, v_{j} \}\not\in E(\Gamma) \right\rangle. 
\end{align*}
For two groups $G_{1}$ and $G_{2}$, we write $G_{1}\leq G_{2}$ if there is an embedding of $G_{1}$ into $G_{2}$, that is, an injective homomorphism $G_{1} \hookrightarrow G_{2}$. 
A subgraph $\Lambda$ of a graph $\Gamma$ is called {\it induced} if any vertices $u$ and $v$ of $\Lambda$  with $\{ u, v \} \in E(\Gamma)$ span an edge in $\Lambda$. 
We write $\Gamma_{1} \leq \Gamma_{2}$ for two graphs $\Gamma_{1}$ and $\Gamma_{2}$ if $\Gamma_{1}$ is isomorphic to an induced subgraph of $\Gamma_{2}$. 
Let $S=S_{g, p}^{b}$ be the connected orientable surface of genus $g$ with $p$ punctures and $b$ boundary components, and we put $S_{g, p}=S_{g, p}^{0}$ and $S_{g}^{b}=S_{g, 0}^{b}$. 
We denote by $\mathrm{Mod}(S_{g, p}^{b})$ the {\it mapping class group} of $S_{g, p}^{b}$, the group of orientation-preserving homeomorphisms of $S_{g, p}^{b}$, fixing the punctures setwise and boundary components pointwise, up to isotopy relative to the boundary. 
The {\it curve graph} of $S$ is the graph whose vertices are isotopy classes of curves on $S$ and whose edges are given by pairs of isotopy classes of curves that can be realized disjointly. 
The {\it complement graph} $\bar{\Gamma}$ of $\Gamma$ is the graph with the vertex set $V(\Gamma)$ and the edge set 
$E(\bar{\Gamma})= \{ \{ u, v \} \mid u,v \in V(\Lambda), \ \{ u , v \} \notin E(\Lambda) \}$. 
In this paper, we use the complement graph of the curve graph, which is denoted by $\bar{\mathcal{C}}(S)$, rather than the original curve graph. 
Koberda's embedding theorem~\cite[Theorem 1.1]{Koberda12} asserts that if the Euler characteristic of an orientable surface $S$ is negative, then $\Gamma \leq \bar{\mathcal{C}}(S)$ implies $G(\Gamma) \leq \mathrm{Mod}(S)$. 
We write $B_{p}$ for the {\it braid group} on $p$ strands, which is identified with $\mathrm{Mod}(S_{0, p}^{1})$, the group of orientation preserving homeomorphisms of $S_{0, p}^{1}$, fixing the punctures setwise and the boundary pointwise, up to isotopy relative to the boundary. 
Besides, $PB_{p}$ denotes the {\it pure braid group} on $p$ strands, which is the subgroup of $B_{p}$ consisting of the elements fixing the punctures point-wise. 
In 2015 Kim-Koberda~\cite[Corollary 1.2 (1)]{Kim-Koberda15} proved that for any graph $\Gamma$ there is $p \geq 1$ such that $G(\Gamma) \leq PB_{p}$. 
Then the following problem naturally arises.

\begin{problem}
For a graph $\Gamma$ and a non-negative integer $p$, decide whether $G(\Gamma) \leq PB_{p}$ or not. 
\label{question_about_smallest_pure_braid_group}
\end{problem}

The following problem \cite[Question 1.1]{Kim-Koberda14} in the case where $g=0$ is closely related to the above problem.

\begin{problem} 
Let $\Gamma$ be a finite graph and $(g,p)$ a pair of non-negative integers. 
Decide whether $G(\Gamma) \leq \mathrm{Mod}(S_{g, p})$ or not.  
\label{question_about_simplest_surface}
\end{problem}

Let $P_{m}$ denotes the {\it path graph} on $m$ vertices, that is, the graph whose vertices is listed in the order $v_{1}, v_{2}, \ldots, v_{m}$ such that the edges are $\{v_{i}, v_{i+1}\}$ where $i=1, 2,\ldots, m-1$. 
Some of previous studies concerning embeddings of right-angled Artin groups into mapping class groups are as follows. 
Birman--Lubotzky--McCarthy~\cite[Theorem A]{Birman-Lubotzky-McCarthy83} proved that if $G$ is an abelian subgroup of $\mathrm{Mod}(S_{g, p})$, then $G$ is finitely generated and the rank of $G$ is at most $\xi(S_{g, p}):=3g-3+p$. 
We call $\xi(S_{g, p})$ the {\it topological complexity} of the surface $S_{g, p}$. 
Besides, using the chromatic numbers of graphs, Kim--Koberda \cite[Theorem 1.2]{Kim-Koberda14} proved that, for any orientable surface $S$ with negative Euler characteristic and for any positive integer $M$, there is a finite graph $\Gamma_{M}$, there is a finite graph $\Gamma_{S, M}$ with combinatorial girth $M$ such that $G(\bar{\Gamma}_{S, M}) \not\leq \mathrm{Mod}(S)$. 
Furthermore, Bering I\hspace{-.1em}V--Conant--Gaster \cite{Bering-Conant-Gaster16} introduced a graph invariant called the {\it nested complexity length} $\mathrm{NCL}(\Gamma)$, and proved that $G(\bar{\Gamma}) \leq \mathrm{Mod}(S_{g, p})$ implies $\mathrm{NCL}(\Gamma) \leq 6g-6+2p$ (see \cite[Corollary 10]{Bering-Conant-Gaster16}). 
Let $F_2$ be the free group of rank $2$, and we regard the direct product $F_2\times F_2\times \cdots\times F_2$ as $G(P_{2} \sqcup P_{2} \sqcup \cdots \sqcup P_{2})$. 
Then \cite[Lemma 5.4]{Katayama17-1} proved that $F_2 \times F_2 \times \cdots \times F_2 \leq \mathrm{Mod}(S_{g, p})$ if and only if the number of the direct factors is not more than $g - 1 + \lfloor \frac{g+p}{2} \rfloor$. 

In this paper, we prove the following theorem, which gives the answer for the path graphs to Problem~\ref{question_about_simplest_surface}.

\begin{theorem}
$G(P_{m}) \leq \mathrm{Mod}(S_{g, p})$ if and only if $m$ satisfies the following inequality. 
\begin{eqnarray*}
m \leq \left\{ \begin{array}{ll}
0 & ((g, p)\in\{(0, 0), (0, 1), (0,2), (0, 3)\}) \\
2 & ((g, p)\in\{(0, 4), (1, 0), (1,1)\}) \\
p-1 & (g=0, \ p \geq 5) \\
p+2 & (g=1, \ p \geq 2) \\
2g+p+1 & (g \geq 2). \\
\end{array} \right.
\end{eqnarray*}
\label{mcg_raag_for_path_graph}
\end{theorem}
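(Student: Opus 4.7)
My plan is to prove the \emph{if} direction by applying Koberda's embedding theorem to explicit chains of curves, and the \emph{only if} direction by a combination of structural facts for the small-complexity mapping class groups and a regular-neighborhood analysis for the remaining cases. In each instance the goal for the \emph{if} direction is to exhibit $P_m$ as an induced subgraph of $\bar{\mathcal{C}}(S_{g,p})$; in the \emph{only if} direction the task is to obstruct such an induced subgraph (or, more strongly, to obstruct the abstract embedding of $G(P_m)$).

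\textbf{Constructions.} For each regime I would exhibit a chain $c_1,\dots,c_m$ of essential simple closed curves on $S_{g,p}$ with $i(c_i,c_j)\neq 0$ iff $|i-j|=1$. For $g\ge 2$, one starts from the classical chain of $2g+1$ curves on the closed surface $S_g$ (the one used in the chain relation), places the $p$ punctures in a small disk adjacent to the last curve $c_{2g+1}$, and extends the chain by $p$ further curves, each subsequent curve meeting only its predecessor and enclosing one additional puncture, so as to produce a chain of length $2g+p+1$. For $g=1$, $p\ge 2$, one starts from two intersecting simple closed curves on the torus (viewed inside $S_{1,p}$) and appends $p$ puncture-loops, carefully arranged so each intersects only its predecessor, to obtain length $p+2$. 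For $g=0$, $p\ge 5$, one arranges the punctures in a row and lets $c_i$ enclose the pair $\{p_i,p_{i+1}\}$, drawn so that non-consecutive curves are disjoint; this gives length $p-1$. In all three regimes Koberda's theorem then yields $G(P_m)\le\mathrm{Mod}(S_{g,p})$.

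\textbf{Obstructions.} The sporadic values $m\le 0$ and $m\le 2$ follow from the fact that $\mathrm{Mod}(S_{g,p})$ is finite for $(g,p)\in\{(0,0),(0,1),(0,2),(0,3)\}$ (so only the trivial RAAG embeds), and is virtually free for $(g,p)\in\{(0,4),(1,0),(1,1)\}$; in the latter case the $\mathbb{Z}^2$ generated by the two commuting end-vertices of $P_3$ cannot embed in a virtually free group, forcing $m\le 2$. For the remaining cases I would suppose $G(P_m)\hookrightarrow\mathrm{Mod}(S_{g,p})$ with $m>\ell(S_{g,p})$ and use the centralizer and support-subsurface techniques of Koberda and Kim--Koberda to extract from the embedding a sequence $c_1,\dots,c_m$ of essential simple closed curves on $S_{g,p}$ inducing $P_m$ in $\bar{\mathcal{C}}(S_{g,p})$. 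The regular neighborhood $N=N(c_1\cup\cdots\cup c_m)$ is then an orientable subsurface with $\chi(N)=1-m$, so that $2g(N)+b(N)=m+1$. Combining $g(N)\le g$ with a careful case analysis of how the $b(N)$ boundary components of $N$ can be distributed among the $p$ punctures and the residual genus of $S_{g,p}\setminus N$ yields the sharp inequality $m\le \ell(S_{g,p})$.

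\textbf{Main obstacle.} The explicit chain constructions and the sporadic case arguments are essentially routine. The hard part is the generic-case necessity argument, for two linked reasons. First, Koberda's theorem is one-directional, so the passage from an abstract embedding $G(P_m)\hookrightarrow\mathrm{Mod}(S_{g,p})$ to a genuine induced chain in $\bar{\mathcal{C}}(S_{g,p})$ must be justified by a subsurface-support analysis tailored to path graphs. Second, extracting the optimal constant from $2g(N)+b(N)=m+1$ requires tracking precisely how many boundary components of $N$ must bound once-punctured disks in $S_{g,p}$ as opposed to being absorbed by complementary genus, and it is this bookkeeping that explains the piecewise jumps of $\ell(S_{g,p})$ (in particular the jump from $m\le 2$ to $m\le p+2$ between $g=1$, $p=1$ and $g=1$, $p=2$, and the $g=0$, $p=4$ exception).
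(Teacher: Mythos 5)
Your \emph{if} direction (explicit chains plus Koberda's theorem) and your treatment of the sporadic cases agree with the paper, but both halves of your generic \emph{only if} argument have real gaps. First, the passage from an abstract embedding $G(P_m)\hookrightarrow\mathrm{Mod}(S_{g,p})$ to an induced copy of $P_m$ in $\bar{\mathcal{C}}(S_{g,p})$ is not something you can wave off to ``centralizer and support-subsurface techniques'': Kim--Koberda's reduction only gives an embedding in which each generator maps to a product of powers of \emph{several} commuting Dehn twists, so the supports define a multi-valued correspondence from a subgraph of $\bar{\mathcal{C}}(S_{g,p})$ onto $P_m$, not a path of curves. The paper devotes its entire Section 3 to showing that a full copy of $P_n$ in the target lifts through this correspondence (using Baudisch's theorem, Kim--Koberda's direct-factor lemma, and a prior lemma of Katayama for $n>3$); the analogous converse to Koberda's theorem is not available for general graphs, so this path-specific lifting is a genuine missing idea in your write-up, not a routine citation.

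Second, your regular-neighborhood count does not prove the sharp bound. The identity $\chi(N)=1-m$ presupposes that all consecutive geometric intersection numbers equal $1$, which you cannot assume for curves merely realizing $P_m$ in $\bar{\mathcal{C}}(S_{g,p})$; in general $\chi(N)=-\sum_i i(c_i,c_{i+1})\le 1-m$, so you only get $2g(N)+b(N)\ge m+1$, a lower bound on the size of $N$ with no ceiling coming from $S_{g,p}$ (complementary regions may be unpunctured disks, so $-\chi(N)$ can exceed $-\chi(S_{g,p})$). Even in the best case of intersection numbers $1$, the bookkeeping ``$g(N)\le g$ plus distributing $b(N)$ among punctures and residual genus'' is consistent with a chain of length $2g+2$ on the closed surface $S_{g}$ (take $g(N)=g$, $b(N)=3$, three disk complements), which the theorem forbids; ruling it out requires knowing the actual topology of chain neighborhoods or, as the paper does, an induction that cuts along the last curve (or along an arc) and tracks the notion of a \emph{chained pair} (a linear chain together with an essential arc meeting only its last curve), with the base cases $S_0^3$, $S_0^4$, $S_1^0$, $S_1^1$ done by hand. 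That cutting induction is precisely what produces the piecewise formula for $\ell(S_{g,p})$, and nothing in your Euler-characteristic scheme replaces it.
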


Let $C_{m}$ be the {\it cyclic graph} on $m \geq 3$ vertices, that is, the graph that consists of $m$ vertices and the underlying space is homeomorphic to a circle. 
We also give the following partial answer to Problem  \ref{question_about_smallest_pure_braid_group}.

\begin{theorem}
Suppose that $p \geq 2$. 
Then the following hold: 
\begin{enumerate}
 \item[(1)] $G(P_{m})$ is embedded into $B_{p}$ and $PB_p$ if and only if $m$ satisfies 
\begin{eqnarray*}
m \leq \left\{ \begin{array}{ll}
p - 1 & (p=2, 3) \\
p & (p \geq 4). \\
\end{array} \right.
\end{eqnarray*}
 \item[(2)] $G(C_{m})$ is embedded into $B_{p}$ and $PB_p$ if and only if $m$ satisfies  
\begin{eqnarray*}
m \leq \left\{ \begin{array}{ll}
0 & (p = 2) \\
3 & (p = 3)  \\
p + 1 & (p \geq 4). \\
\end{array} \right.
\end{eqnarray*}
\end{enumerate}
\label{braid_group_raag}
\end{theorem}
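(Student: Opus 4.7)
The plan is to reduce to Theorem~\ref{mcg_raag_for_path_graph} via the central extension
\[
1 \longrightarrow Z(B_p) \longrightarrow B_p \longrightarrow \mathrm{Mod}(S_{0,p+1}) \longrightarrow 1,
\]
obtained by capping the boundary of $S_{0,p}^{1}$ with a once-punctured disc; its kernel $Z(B_p)=\langle T_\partial\rangle$ is generated by the Dehn twist along the boundary. The key simple observation is that whenever $H\le G$ with $Z(H)=\{1\}$, one has $H\cap Z(G)\subseteq Z(H)=\{1\}$, so the quotient $G\twoheadrightarrow G/Z(G)$ is injective on $H$. Under the convention of this paper, commutation corresponds to non-adjacency, so a generator is central iff it is an isolated vertex; neither $P_m$ (for $m\ge 2$) nor $C_m$ (for $m\ge 3$) has an isolated vertex, hence $G(P_m)$ and $G(C_m)$ both have trivial center.

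For the non-embedding direction of part~(1) with $m\ge 2$, composing an embedding $G(P_m)\hookrightarrow B_p$ with the projection yields $G(P_m)\hookrightarrow\mathrm{Mod}(S_{0,p+1})$, and Theorem~\ref{mcg_raag_for_path_graph} applied to $S_{0,p+1}$ forces $m\le 2$ when $p=3$ and $m\le p$ when $p\ge 4$; the case $p=2$ is trivial because $B_2\cong\mathbb{Z}$. For part~(2), the induced subgraph inclusion $P_{m-1}\le C_m$ (delete any vertex of the cycle) yields $G(P_{m-1})\le G(C_m)$, so an embedding $G(C_m)\hookrightarrow B_p$ produces $G(P_{m-1})\hookrightarrow\mathrm{Mod}(S_{0,p+1})$, and Theorem~\ref{mcg_raag_for_path_graph} delivers $m\le 3$ for $p=3$ and $m\le p+1$ for $p\ge 4$; when $p=2$ no $m\ge 3$ works, since $B_2$ is abelian.

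For the embedding direction I realize the target RAAGs as subgroups of $PB_p\le B_p$ generated by sufficiently high powers of Dehn twists along an explicit collection of essential simple closed curves in $S_{0,p}^{1}$; Koberda's embedding theorem applies since $\chi(S_{0,p}^{1})=1-p<0$ for $p\ge 2$, and the resulting generators automatically lie in $PB_p$ because Dehn twists along non-peripheral curves in $S_{0,p}^{1}$ fix every puncture pointwise. For part~(1) with $p\ge 4$, I take $c_i$ to bound $\{P_i,P_{i+1}\}$ for $1\le i\le p-1$ and $c_p$ to bound $\{P_1,\ldots,P_{p-1}\}$. Using the disjointness criterion in the disc---that $c$ and $c'$ can be realized disjointly iff one of $A\subseteq A'$, $A'\subseteq A$, $A\cap A'=\emptyset$ holds for their enclosed puncture sets $A,A'$---a direct case check shows the intersection graph of $\{c_1,\ldots,c_p\}$ is $P_p$. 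For part~(2) with $p\ge 4$, I adjoin $c_{p+1}$ bounding $\{P_2,\ldots,P_p\}$; the same criterion shows $c_{p+1}$ is disjoint from $c_2,\ldots,c_{p-1}$ and intersects both $c_1$ and $c_p$, closing the path into a cycle $C_{p+1}$. The remaining small cases are handled by the classical containment $F_2\le B_3$, which yields $G(P_2)=F_2\le B_3$ and $G(C_3)=F_3\le F_2\le B_3$.

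The main obstacle I foresee is the combinatorial verification of the intersection graphs for these explicit curve systems: the boundary of $S_{0,p}^{1}$ breaks the complementation symmetry that simplifies such computations on the closed sphere $S_{0,p+1}$, and the pair $(c_p,c_{p+1})$ whose enclosed puncture sets are both of size $p-1$ (with neither containing the other) must be shown to intersect by the disc criterion rather than by a naive cardinality argument.
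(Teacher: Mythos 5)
Your overall strategy coincides with the paper's: kill the center of $B_p$ so that a centerless RAAG drops into $\mathrm{Mod}(S_{0,p+1})$, apply Theorem~\ref{mcg_raag_for_path_graph} there, and handle cycles via the induced subgraph $P_{m-1}\leq C_m$ (this is exactly the reduction in Lemma~\ref{braid_mcg_lemma} and Proposition~\ref{mcg_for_raag_cyclic}; the paper implements the central quotient through the Clay--Leininger--Margalit splitting $PB_p\cong\mathrm{PMod}(S_{0,p+1})\times\mathbb{Z}$, you through the boundary-capping homomorphism, and at $p=3$ your $P_{m-1}\leq C_m$ argument replaces the paper's appeal to the torus-knot classification in \cite{Katayama16}, which is fine). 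One small misstatement: the capping map $B_p\to\mathrm{Mod}(S_{0,p+1})$ is not surjective (its image is the stabilizer of the new puncture), so your displayed sequence is not exact on the right; this is harmless because you only use that its kernel is $\langle T_{\partial}\rangle=Z(B_p)$ for $p\geq 3$ and that a centerless subgroup meets the kernel trivially. Your explicit curve systems in $S_{0,p}^{1}$ are correct: the nestedness/disjointness criterion in the punctured disc does give the induced $P_p$ and $C_{p+1}$ you claim.

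The genuine gap is in the ``if'' direction of part (2). Your construction realizes only the extremal cycle $C_{p+1}$ in $\bar{\mathcal{C}}(S_{0,p}^{1})$, and your ``remaining small cases'' cover only $p=3$ (i.e.\ $C_3$); the values $4\leq m\leq p$ (nonempty as soon as $p\geq 4$) are never addressed. Unlike paths, these do not follow by monotonicity: $C_m$ is not an induced subgraph of $C_{p+1}$, and there is no stated (or obvious) embedding $G(C_m)\leq G(C_{p+1})$, so exhibiting the top case does not formally yield the smaller ones, and the biconditional remains unproved for those $m$. The repair is short and stays within your method: for $5\leq m\leq p+1$ run your construction using only the punctures $P_1,\dots,P_{m-1}$ inside $S_{0,p}^{1}$, and for $m=4$ take curves enclosing $\{P_1,P_2\},\{P_2,P_3\},\{P_3,P_4\},\{P_4,P_1\}$, which by your disc criterion form an induced $C_4$; Koberda's theorem then gives all $G(C_m)\leq PB_p$ for $3\leq m\leq p+1$. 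Two further trivial but needed remarks: for $p=3$ the statement also requires embeddings into $PB_3$ (e.g.\ note $\sigma_1^2,\sigma_2^2\in PB_3$ generate a free group, or invoke Lemma~\ref{power-homomorphism_injective}~(2)), and for part (1) the values $m<p$ do follow at once since $P_m$ is an induced subpath of $P_p$, though you should say so.
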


We next describe some corollaries of Theorems \ref{mcg_raag_for_path_graph} and \ref{braid_group_raag} on embeddings of finite index subgroups of mapping class groups. 
Homomorphisms $B_{2g+1} \rightarrow \mathrm{Mod}(S_{g, 0}^{1})$ and $B_{2g+2} \rightarrow \mathrm{Mod}(S_{g, 0}^{2})$, which map the standard generators as an Artin group to the Dehn twists along a chain of interlocking simple closed curves, are injective by a theorem due to Birman--Hilden (see \cite[Chapter 9]{Farb-Margalit12}). 
In addition to this construction, the inclusions $S_{g, 0}^{1} \rightarrow S_{g+1, 0}$ and $S_{g, 0}^{2} \rightarrow S_{g+1, 0}$ induce embeddings $B_{2g+1} \leq \mathrm{Mod}(S_{g+1, 0})$ and $B_{2g+2} \leq \mathrm{Mod}(S_{g+1, 0})$ (see \cite[Theorem 4.1]{Paris-Rolfsen00} and Proof of Theorem \ref{virtual_emb_sphere_closed_surf} in Section \ref{virtual_emb_mcg_section}). 
On the other hand, Theorems \ref{mcg_raag_for_path_graph} and \ref{braid_group_raag} imply the following.

\begin{theorem}
Suppose that $g \geq 0$. 
Then the following hold. 
\begin{enumerate}
 \item[(1)] If $B_{2g+1}$ is virtually embedded into $\mathrm{Mod}(S_{g', 0})$, then $g \leq g'$. 
 \item[(2)] If $B_{2g+1}$ is virtually embedded into $\mathrm{Mod}(S_{g', 0}^{1})$, then $g \leq g'$. 
 \item[(3)] If $B_{2g+2}$ is virtually embedded into $\mathrm{Mod}(S_{g', 0})$, then $g+1 \leq g'$. 
 \item[(4)] If $B_{2g+2}$ is virtually embedded into $\mathrm{Mod}(S_{g', 0}^{2})$, then $g \leq g'$. 
\end{enumerate}
\label{for_Birman-Hilden}
\end{theorem}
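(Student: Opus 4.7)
The plan is to reduce each claim to Theorem \ref{mcg_raag_for_path_graph} by locating a large path-graph RAAG inside the relevant braid group via Theorem \ref{braid_group_raag} and transporting it through the assumed virtual embedding. Concretely, Theorem \ref{braid_group_raag}(1) furnishes $G(P_{2g+1}) \leq B_{2g+1}$ for $g \geq 2$ and $G(P_{2g+2}) \leq B_{2g+2}$ for $g \geq 1$, which will be the starting point in the main range of parameters.

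The preliminary step I would record is a virtual-to-honest promotion: if $G(P_m) \leq G$ and $G$ is virtually embedded in $M$, then $G(P_m) \leq M$. This uses the standard observation that, for every $N \geq 1$, the endomorphism of a RAAG sending each standard generator to its $N$-th power is injective, so every finite-index subgroup of $G(P_m)$ contains an isomorphic copy of $G(P_m)$. Intersecting $G(P_m)$ with the finite-index subgroup of $G$ that embeds into $M$ completes the promotion.

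For parts (1) and (3), where the target is $\mathrm{Mod}(S_{g',0})$, the promotion produces $G(P_{2g+1}) \leq \mathrm{Mod}(S_{g',0})$ or $G(P_{2g+2}) \leq \mathrm{Mod}(S_{g',0})$, and Theorem \ref{mcg_raag_for_path_graph} with $p=0$ delivers $2g+1 \leq 2g'+1$ and $2g+2 \leq 2g'+1$, i.e., $g \leq g'$ and $g+1 \leq g'$. For parts (2) and (4), where the target has $b \in \{1,2\}$ boundary components, I would compose the embedding with the capping-off surjection $\mathrm{Mod}(S_{g'}^{b}) \twoheadrightarrow \mathrm{Mod}(S_{g',b})$, whose kernel is the central subgroup generated by the $b$ boundary Dehn twists; since no vertex of $P_m$ is adjacent to every other vertex, the center of $G(P_m)$ is trivial for $m \geq 2$, so the composition remains injective on $G(P_m)$, and Theorem \ref{mcg_raag_for_path_graph} with $p=b$ gives $g \leq g'$ in both parts.

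Finally, the cases where the RAAG approach is not directly applicable --- $g=0$ in every part, $g=1$ in parts (1) and (2), and $g=0$ in part (3) --- reduce to either an automatic inequality or to ruling out $g'=0$. All such exclusions follow from the near-triviality of the mapping class groups $\mathrm{Mod}(S^2)$, $\mathrm{Mod}(S_{0}^{1})$, $\mathrm{Mod}(S_{0,2})$ (which are finite) and $\mathrm{Mod}(S_{0}^{2}) \cong \mathbb{Z}$ (which is abelian): neither a finite group nor $\mathbb{Z}$ can virtually contain the relevant braid group in each situation. The main obstacle I anticipate is isolating the virtual-to-honest promotion cleanly; once that standard RAAG fact is pinned down, the rest is a routine case analysis driven by Theorems \ref{mcg_raag_for_path_graph} and \ref{braid_group_raag}.
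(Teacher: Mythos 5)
Your proposal is correct and follows essentially the same route as the paper: extract $G(P_{2g+1})$, resp.\ $G(P_{2g+2})$, from the braid group via Theorem \ref{braid_group_raag}, promote it through the virtual embedding by the power-endomorphism Lemma \ref{power-homomorphism_injective}, pass to the punctured surface via the capping map with central kernel using centerlessness of the path RAAG in parts (2) and (4), apply Theorem \ref{mcg_raag_for_path_graph}, and dispose of the small-$g$ cases by the near-triviality of the low-complexity mapping class groups. One small wording fix: under this paper's complement convention (generators commute iff non-adjacent), $G(P_m)$ is centerless because $P_m$ has no isolated vertex; your criterion ``no vertex of $P_m$ is adjacent to every other vertex'' is the criterion for the usual convention and is even false for $P_2$ and $P_3$, though the centerlessness you actually need is true.
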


In the above theorem, we say that a group $G$ is {\it virtually embedded} into a group $H$ if there is a finite index subgroup $K$ of $G$ such that $K \leq H$. 
Note that the braid groups are residually finite, and hence there are infinitely many finite index subgroups in each braid group on $\geq 2$ strands. 
We also note that theorems due to Castel \cite[Theorems 2 and 3]{Castel16} with a bit argument imply the following: 
\begin{enumerate}
 \item[$\bullet$] $B_{2g+1} \leq \mathrm{Mod}(S_{g', 0})$ implies $g+1 \leq g'$. 
 \item[$\bullet$] $B_{2g+1} \leq \mathrm{Mod}(S_{g', 0}^{1})$ implies $g \leq g'$. 
 \item[$\bullet$] $B_{2g+2} \leq \mathrm{Mod}(S_{g', 0})$ implies $g+1 \leq g'$. 
 \item[$\bullet$] $B_{2g+2} \leq \mathrm{Mod}(S_{g', 0}^{2})$ implies $g \leq g'$. 
\end{enumerate}
Therefore, a part of Theorem~\ref{for_Birman-Hilden} follows from Castel's results.
The inequalities in Theorem \ref{for_Birman-Hilden} (2), (3) and (4) are optimum by the construction of embeddings described above. 
However, Theorem \ref{for_Birman-Hilden} (1) seems to be not optimum (see Remark \ref{for_Birman-Hilden_remark}). 
To refine Theorem \ref{for_Birman-Hilden} (1), we need another argument. 
In general, we are interested in relation between surface topology and the virtual embeddability between mapping class groups. 
In this paper, using Theorem \ref{mcg_raag_for_path_graph} and Birman--Hilden theory, we obtain the following virtual embeddability result between mapping class groups.

\begin{theorem}
Let $g$ be an integer $\geq 2$. 
Then $\mathrm{Mod}(S_{0, p})$ is virtually embedded in $\mathrm{Mod}(S_{g, 0})$ if and only if $p \leq 2g+2$. 
\label{virtual_emb_sphere_closed_surf}
\end{theorem}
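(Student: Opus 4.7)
The plan is to combine Theorem~\ref{mcg_raag_for_path_graph} with Birman--Hilden theory, treating the two directions separately.

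For the only-if direction, assume $\mathrm{Mod}(S_{0,p})$ has a finite-index subgroup $K$ embedding into $\mathrm{Mod}(S_{g,0})$. When $p \leq 4$ the conclusion $p \leq 2g+2$ follows from $g \geq 2$, so assume $p \geq 5$. By Theorem~\ref{mcg_raag_for_path_graph} in the case $g = 0$, $p \geq 5$ there is an embedding $\phi \colon G(P_{p-1}) \hookrightarrow \mathrm{Mod}(S_{0,p})$, whence $\phi^{-1}(K)$ has finite index in $G(P_{p-1})$. I would then invoke the standard fact that for every $n \geq 1$ the map sending each vertex generator $v$ of a RAAG to $v^n$ extends to an injective endomorphism; choosing $n$ large enough that each $v_i^n$ lies in $\phi^{-1}(K)$ produces a copy of $G(P_{p-1})$ inside $K \leq \mathrm{Mod}(S_g)$. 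Applying Theorem~\ref{mcg_raag_for_path_graph} with $g \geq 2$, $p = 0$ then forces $p-1 \leq 2g+1$.

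For the if direction, the crucial case is $p = 2g+2$. The hyperelliptic involution $\iota$ on $S_g$ yields via Birman--Hilden theory the short exact sequence
\[
1 \longrightarrow \langle \iota \rangle \longrightarrow \mathrm{SMod}(S_g) \longrightarrow \mathrm{Mod}(S_{0,2g+2}) \longrightarrow 1,
\]
where $\mathrm{SMod}(S_g)$ is the hyperelliptic mapping class group. Let $\Gamma_g[3]$ be the level-$3$ congruence subgroup of $\mathrm{Mod}(S_g)$, namely the kernel of the action on $H_1(S_g;\mathbb{Z}/3)$. Since $\iota$ acts on $H_1(S_g;\mathbb{Z})$ as $-I$ and $-I \not\equiv I \pmod 3$, we have $\iota \notin \Gamma_g[3]$. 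Hence $\mathrm{SMod}(S_g) \cap \Gamma_g[3]$ is a finite-index subgroup of $\mathrm{SMod}(S_g)$ that meets $\langle \iota \rangle$ trivially, and the surjection above restricts to an isomorphism from it onto a finite-index subgroup of $\mathrm{Mod}(S_{0,2g+2})$, exhibiting the desired virtual embedding.

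To handle $p < 2g+2$, the cases $p \leq 2$ are immediate since $\mathrm{Mod}(S_{0,p})$ is finite; so assume $3 \leq p < 2g+2$. I would reduce to the case $p = 2g+2$ by embedding $P\mathrm{Mod}(S_{0,p})$ into $P\mathrm{Mod}(S_{0,2g+2})$. Fixing three punctures at $0,1,\infty$ via the triple-transitive action of $PSL(2,\mathbb{C})$ identifies $P\mathrm{Mod}(S_{0,p})$ with the pure surface braid group $\pi_1(C_{p-3}(\mathbb{C}\setminus\{0,1\}))$, where $C_k$ denotes ordered configuration space. The Fadell--Neuwirth fibration $C_{p-2}(\mathbb{C}\setminus\{0,1\}) \to C_{p-3}(\mathbb{C}\setminus\{0,1\})$ forgetting the last point admits a continuous section by adjoining a new point of sufficiently large modulus, inducing an injection $P\mathrm{Mod}(S_{0,p}) \hookrightarrow P\mathrm{Mod}(S_{0,p+1})$. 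Iterating and composing with the virtual embedding of the previous paragraph, together with the finite-index inclusion $P\mathrm{Mod}(S_{0,p}) \leq \mathrm{Mod}(S_{0,p})$, yields the required virtual embedding. I expect the main subtlety to lie in the Birman--Hilden step: the level-$2$ subgroup would fail since $-I \equiv I \pmod 2$, so using level $3$ is essential for separating $\iota$ from the identity while retaining finite index.
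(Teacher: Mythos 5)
Your proof is correct, and the ``only if'' direction together with the $p=2g+2$ case follows essentially the paper's own argument: the paper also derives $p-1\leq 2g+1$ from $G(P_{p-1})\leq\mathrm{Mod}(S_{0,p})$ via the power trick (this is exactly Lemma~\ref{power-homomorphism_injective}, which you could simply cite instead of re-deriving it; note the exponent $n$ should be a common multiple of the orders of the generators modulo $\phi^{-1}(K)$, not merely ``large enough''), and for $p=2g+2$ the paper also intersects $\mathrm{SMod}(S_{g,0})$ with a finite-index subgroup missing $\iota$ -- the only difference being that the paper invokes residual finiteness abstractly where you exhibit the level-$3$ congruence subgroup explicitly. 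Where you genuinely diverge is the case $p\leq 2g+1$: the paper embeds $B_{2g}\cong\mathrm{SMod}(S_{g-1,0}^{2})$ into $\mathrm{Mod}(S_{g,0})$ by gluing a cylinder and using Paris--Rolfsen's description of the kernel of $j_*$ (checking it meets $\mathrm{SMod}(S_{g-1,0}^{2})$ trivially), and then uses $\mathrm{PMod}(S_{0,p})\leq PB_{p-1}\leq B_{2g}$; you instead funnel everything through the single closed-surface hyperelliptic construction by embedding $\mathrm{PMod}(S_{0,p})\hookrightarrow\mathrm{PMod}(S_{0,2g+2})$ via the identification with $\pi_1$ of a configuration space of points in $\mathbb{C}\setminus\{0,1\}$ and the Fadell--Neuwirth ``add a point near infinity'' sections, then pull back the finite-index subgroup. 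Both routes work: the paper's avoids configuration-space identifications at the cost of the boundary version of Birman--Hilden plus Paris--Rolfsen, while yours needs only the closed-surface Birman--Hilden theorem plus standard facts about pure sphere braid groups; just make the final bookkeeping explicit, namely that the preimage of the finite-index subgroup $K\leq\mathrm{Mod}(S_{0,2g+2})$ under your composite injection is finite index in $\mathrm{PMod}(S_{0,p})$, hence in $\mathrm{Mod}(S_{0,p})$, and embeds in $\mathrm{Mod}(S_{g,0})$.
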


We note that residual finiteness of mapping class groups (\cite[Theorem 6.11]{Farb-Margalit12} and  \cite{Grossman74}) guarantees that a large supply of finite index subgroups of the mapping class groups. 

In this paper, we obtain the following corollary of Theorem \ref{mcg_raag_for_path_graph}.

\begin{corollary}
Let $g$ and $g'$ be integers $\geq 2$. 
Suppose that $\mathrm{Mod}(S_{g, p})$ is virtually embedded into $\mathrm{Mod}(S_{g', p'})$. 
Then the following inequalities (1) and (2) hold:  
\begin{enumerate}
 \item[(1)] $3g+p \leq 3g'+p'$, 
 \item[(2)] $2g+p \leq 2g'+p'$. 
\end{enumerate}
\label{rigid_mcg}
\end{corollary}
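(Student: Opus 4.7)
The plan is to promote the hypothesized virtual embedding into two honest embeddings of well-chosen subgroups: a large free abelian subgroup (to obtain (1)) and a $G(P_m)$ with $m$ as large as allowed by Theorem~\ref{mcg_raag_for_path_graph} (to obtain (2)). The key auxiliary input in both cases is the standard fact that every finite index subgroup $H$ of a right-angled Artin group $G(\Gamma)$ contains an isomorphic copy of $G(\Gamma)$. To verify this I would, for each vertex $v_i$ of $\Gamma$, pick $n_i \geq 1$ with $v_i^{n_i} \in H$ (which exists by pigeonhole on the right cosets $v_i^{n} H$) and invoke the well-known fact that $v_i \mapsto v_i^{n_i}$ defines an injective endomorphism of $G(\Gamma)$; then $\langle v_1^{n_1}, \ldots, v_k^{n_k}\rangle \leq H$ is isomorphic to $G(\Gamma)$.

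Let $K$ be a finite index subgroup of $\mathrm{Mod}(S_{g,p})$ with $K \hookrightarrow \mathrm{Mod}(S_{g',p'})$. For inequality (1), I would use that $\mathrm{Mod}(S_{g,p})$ contains $\mathbb{Z}^{3g-3+p}$, namely the subgroup generated by Dehn twists along the curves of a pants decomposition of $S_{g,p}$ (which exists since $g \geq 2$). The auxiliary fact above then yields $\mathbb{Z}^{3g-3+p} \leq K$, hence $\mathbb{Z}^{3g-3+p} \leq \mathrm{Mod}(S_{g',p'})$, and the Birman--Lubotzky--McCarthy theorem forces $3g-3+p \leq 3g'-3+p'$, which is inequality (1). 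For inequality (2), Theorem~\ref{mcg_raag_for_path_graph} applied to $S_{g,p}$ (in the range $g \geq 2$) gives $G(P_{2g+p+1}) \leq \mathrm{Mod}(S_{g,p})$, so the auxiliary fact upgrades this to $G(P_{2g+p+1}) \leq K \leq \mathrm{Mod}(S_{g',p'})$; since $g' \geq 2$, Theorem~\ref{mcg_raag_for_path_graph} applied in the other direction forces $2g+p+1 \leq 2g'+p'+1$, which is inequality (2).

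The only step that requires any thought is the auxiliary RAAG lemma, but it is standard (it follows, for instance, from the existence of a canonical normal form for elements of $G(\Gamma)$ or from the action of $G(\Gamma)$ on its Salvetti complex); everything else is a direct combination of Theorem~\ref{mcg_raag_for_path_graph} with the Birman--Lubotzky--McCarthy rank bound.
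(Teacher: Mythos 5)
Your proposal is correct and takes essentially the same route as the paper: the paper's Theorem~\ref{rigid_mcg_main} combines Lemma~\ref{power-homomorphism_injective} (the finite-index passage for RAAGs, stated there without proof), Theorem~\ref{mcg_raag_for_path_graph}, and the Birman--Lubotzky--McCarthy rank bound exactly as you do, and Corollary~\ref{rigid_mcg} then follows because $\ell(S_{g,p})=2g+p+1$ and $\xi(S_{g,p})=3g-3+p$ when $g\geq 2$. The only cosmetic difference is that you sketch a proof of the auxiliary lemma (where one may simplify by taking a single exponent $N$ equal to the least common multiple of the $n_i$, since $v_i^{n_i}\in H$ gives $v_i^{N}\in H$, and then quoting the uniform power endomorphism), which the paper records as Lemma~\ref{power-homomorphism_injective} without proof.
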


For a more precise statement of Corollary \ref{rigid_mcg}, see Theorems \ref{rigid_mcg_main} and \ref{main_thm_linear_chain}. 
The inequality (1) follows from Birman--Lubotzky--McCarthy's result, and so the new part of the above corollary is the inequality (2). 

\begin{remark}
We can see that if $(3g+p, 2g+p)=(3g'+p', 2g'+p')$, then $(g, p) = (g', p')$. 
Hence, if the following conditions are satisfied, then we have $(g, p) = (g', p')$ by Corollary \ref{rigid_mcg}: 
\begin{enumerate}
 \item[$\bullet$] the genera $g$ and $g'$ are not less than $2$, 
 \item[$\bullet$] $\mathrm{Mod}(S_{g, p})$ is virtually embedded in $\mathrm{Mod}(S_{g', p'})$, and
 \item[$\bullet$] $\mathrm{Mod}(S_{g', p'})$ is virtually embedded in $\mathrm{Mod}(S_{g, p})$. 
\end{enumerate} 

We now discuss some previous embeddability results and Corollary \ref{rigid_mcg}. 
Theorems due to Ivanov--McCarthy \cite[Theorems 3 and 4]{Ivanov-McCarthy99} assert that, embeddings between mapping class groups of connected orientable surfaces are isomorphisms induced by surface homeomorphisms, if the topological complexities of the surfaces differ by at most one and the surfaces satisfy some general conditions. 
More recently, Aramayona--Souto \cite[Corollary 1.3]{Aramayona-Souto12} proved that every non-trivial homomorphism from $\mathrm{PMod}(S_{g, p})$ to $\mathrm{PMod}(S_{g', p'})$ is induced by a surface embedding when $g \geq 6$ and $g' \leq 2g-1$ (in case $g'=2g-1$, further assume $p'=0$). 
Note that Corollary \ref{rigid_mcg} is not an immediate corollary of theorems due to Ivanov--McCarthy and Aramayona--Souto described above. 
For all $m \geq 0$, there are infinitely many pairs  of surfaces, $S_{g, p}$ and $S_{g', p'}$, such that no finite index subgroup of $\mathrm{Mod}(S_{g, p})$ is embedded into $\mathrm{Mod}(S_{g', p'})$, and the inequalities $\xi(S_{g', p'})-\xi(S_{g, p})\geq m$ and $g<6$ hold. 
In fact, by setting $S_{g, p}=S_{2, 2m+3+n}$ and $S_{g', p'} =S_{m+3, n}$, where $n$ is any non-negative integer, we obtain infinite pairs satisfying the desired properties. 
To see that no finite index subgroup of $\mathrm{Mod}(S_{2, 2m+3+n})$ is embedded into $\mathrm{Mod}(S_{m+3, n})$, use the inequality (2) in Corollary \ref{rigid_mcg}. 
\end{remark}

This paper is organized as follows. 
Section \ref{linear_chain_section} is devoted to discuss realizability of certain curve systems on surfaces in order to deduce Theorems \ref{mcg_raag_for_path_graph} and \ref{braid_group_raag}. 
In Section \ref{path_lifting_section}, we discuss embeddings of right-angled Artin groups into surface mapping class groups from combinatorial view-point for the sake of introducing an obstruction to the existence of embeddings. 
We prove Theorems \ref{mcg_raag_for_path_graph} and \ref{braid_group_raag} in Section \ref{simplest_section}. 
Virtual embeddability between mapping class groups are discussed in Section \ref{virtual_emb_mcg_section}; in particular, Theorems \ref{for_Birman-Hilden} and \ref{virtual_emb_sphere_closed_surf}, and Corollary \ref{rigid_mcg} are proved.

\subsection*{Acknowledgements}
The authors express their gratitude to Matthew Clay, Sang-hyun Kim, Thomas Koberda and Dan Margalit for helpful discussions. 
The first author thanks Makoto Sakuma and Naoki Sakata for checking proofs and suggesting a number of improvements regarding this paper.  
The second author is extremely grateful to Hisaaki Endo for his warm encouragement and helpful advice. 

\section{Linear chains on surfaces \label{linear_chain_section}}

In this section we only discuss the surfaces with boundary and without puncture in order to consider ``properly embedded arcs". 
Theorem \ref{main_thm_linear_chain}, the main result in this section, is easily translated into an equivalent result on the surface without boundary and with punctures. 
Therefore we will denote by $S_{g}^{p}$ a compact connected orientable surface of genus $g$ with $p$ boundary components and without puncture.  
An arc $\delta$ on a surface $S$ is called {\it properly embedded} if $\delta \cap \partial S = \partial \delta$. 
A properly embedded arc $\delta$ on $S$ is called {\it essential} if it is not isotopic rel $\partial \delta$ into $\partial S$.
A simple closed curve $\alpha$ on $S$ is called {\it essential} if it does not bound a disk and it is not isotopic to a boundary component of $S$. 
We denote by $N(\alpha)$ a regular neighbourhood of an arc or a curve $\alpha$, and by $\mathrm{Int}N(\alpha)$ the interior of $N(\alpha)$. 
From now on, we consider only properly embedded essential simple arcs and essential simple closed curves. 

\begin{definition} 
For two closed curves $\alpha$ and $\beta$, we denote the geometric intersection number by $i(\alpha, \beta)$. 
A sequence $\{ \alpha_1, \alpha_2, \ldots, \alpha_m \}$ of closed curves on $S_{g}^{p}$ is called a {\it linear chain} if this sequence satisfies the following. 
\begin{enumerate}
 \item[$\bullet$] Any two distinct curves $\alpha_i$ and $\alpha_j$ are non-isotopic. 
 \item[$\bullet$] Any two consecutive curves $\alpha_i$ and $\alpha_{i+1}$ are in minimal position and satisfy $i(\alpha_i, \alpha_{i+1}) > 0$. 
 \item[$\bullet$] Any two non-consecutive curves are disjoint. 
\end{enumerate}
If $\{ \alpha_1, \alpha_2, \ldots, \alpha_m \}$ is a linear chain, we call $m$ its {\it length}. 
By $\ell(S_{g}^{p})$, we denote the maximum length of the linear chains on $S_{g}^{p}$. 
Note that if $\chi(S_{g}^{p})< 0$ and $S_{g}^{p}$ is not homeomorphic to neither $S_{0}^{4}$ nor $S_{1}^{1}$, then there is a linear chain of length $m$ on $S_{g}^{p}$ if and only if $P_{m} \leq \bar{\mathcal{C}}(S_{g}^{p})$. 
Hence, the quantity $\ell(S_{g}^{p})$ is the maximum number $m$ such that $P_{m} \leq \bar{\mathcal{C}}(S_{g}^{p})$ for such surfaces $S_{g}^{p}$. 
The pair of a linear chain $\{ \alpha_1, \ldots \alpha_m \}$ and an arc $\delta$ on a surface is said to be {\it chained} if $\delta$ and the last curve $\alpha_m$ are in minimal position and not disjoint, but $\delta$ is disjoint from the other closed curves.  
\end{definition}

The main theorem in this section is the following. 

\begin{theorem}
For the maximum length $\ell(S_{g}^{p})$ of the linear chains on $S_{g}^{p}$, we have the following.
\begin{eqnarray*}
\ell(S_{g}^{p})= \left\{ \begin{array}{ll}
0 & (g=0, \ p \leq 3), \\
2 & ((g, p)\in\{(0, 4), (1, 0), (1,1)\}), \\
p-1 & (g=0, \ p \geq 5), \\
p+2 & (g=1, \ p \geq 2), \\
2g+p+1 & (g \geq 2). \\
\end{array} \right. 
\end{eqnarray*}
\label{main_thm_linear_chain}
\end{theorem}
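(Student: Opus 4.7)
The plan is to prove the theorem in two complementary parts: an explicit construction giving the lower bound on $\ell(S_g^p)$, and an inductive argument giving the matching upper bound. The small-complexity regimes $(g,p)\in\{(0,0),(0,1),(0,2),(0,3)\}$ (where $\ell = 0$) and $(g,p)\in\{(0,4),(1,0),(1,1)\}$ (where $\ell = 2$) are handled directly by inspection of the essential simple closed curves on these surfaces; these are either empty or form a Farey-type complex, and the claim is immediate.

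For the lower bound I would exhibit an explicit linear chain of the stated length in each remaining case. The main building block is the classical chain of $2g+1$ essential simple closed curves $c_1,\dots,c_{2g+1}$ on the closed orientable genus $g$ surface underlying the chain relation: consecutive curves meet once and non-consecutive curves are disjoint. On $S_g^p$ with $g\ge 2$, I embed this chain into $S_g^p \hookrightarrow S_g^0$ so that $c_{2g+1}$ lies near a chosen boundary component, and then attach $p$ further curves $\beta_1,\dots,\beta_p$ that weave around the boundary components in such a way that $\beta_1$ meets only $c_{2g+1}$ and each $\beta_j$ ($j\ge 2$) meets only $\beta_{j-1}$; this yields a linear chain of length $2g+p+1$. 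Parallel but truncated pictures realise linear chains of length $p+2$ on $S_1^p$ for $p\ge 2$ and of length $p-1$ on $S_0^p$ for $p\ge 5$. In each case the bigon criterion confirms that consecutive intersections are minimal and transverse while non-consecutive curves are disjoint.

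For the upper bound I proceed by induction on the topological complexity of the surface, cutting along a terminal curve of a purported chain and passing to a simpler surface. The essential auxiliary notion is the \emph{chained pair} $(C,\delta)$ already singled out in the definition: a linear chain together with a chained arc. Given a maximum linear chain $\alpha_1,\alpha_2,\dots,\alpha_m$ on $S_g^p$, cutting along $\alpha_1$ produces a (possibly disconnected) surface $S'$ of strictly smaller complexity on which $\alpha_3,\dots,\alpha_m$ remain a linear chain and $\alpha_2$ becomes a properly embedded arc chained to it. Thus bounding $\ell(S_g^p)$ reduces to bounding the maximum length $\ell^*(S')$ of a chained pair on $S'$. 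I set up a parallel induction on the quantities $\ell$ and $\ell^*$ in which each cutting step alternates between them; the case analysis is governed by whether the curve being cut is separating or non-separating and, in the separating case, by the topology of each piece, together with the position of the endpoints of the cut-open arc.

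The principal technical obstacle is producing the exact constants $2g+p+1$, $p+2$, and $p-1$ rather than merely an asymptotically correct bound. A direct Euler characteristic argument applied to a regular neighborhood $N$ of the chain (whose graph has $\sum k_i$ vertices, $2\sum k_i$ edges, and hence $\chi(N)\le -(m-1)$) only gives $m\le 2g+p-1$, which is off by two; closing this gap requires careful control of the disk components in the complement of the chain, delicate handling of the separating/non-separating dichotomy under cutting, and dedicated treatment of the small-complexity base cases (three- and four-holed spheres, once-punctured tori). Keeping the constants sharp through the intertwined induction on $\ell$ and $\ell^*$ is the technical heart of the argument.
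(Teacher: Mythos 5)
Your plan is correct and follows essentially the same route as the paper: explicit chains for the lower bounds, and for the upper bounds an intertwined induction in which cutting along a terminal curve of a linear chain produces a chained pair on a simpler surface and cutting along the chained arc reduces the chained-pair bound, with dedicated base cases on $S_0^3$, $S_0^4$, $S_1^0$, $S_1^1$. This is precisely the structure of the paper's Lemmas on cutting surfaces and on chained pairs (including the separate quantity you call $\ell^*$, which the paper tracks as statement (2) in each of its inductive lemmas).
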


By induction on the ordered pair $(g, p)$ and a surface cutting argument, we prove this theorem. 
To proceed the induction, we need the following two lemmas. 

\begin{lemma}\label{cutting_surfaces}
The following (1) and (2) hold.  
\begin{enumerate}
 \item[(1)] If $\alpha$ is a closed curve on $S_{g}^{p}$, then 
 \begin{eqnarray*}
S_{g}^{p} \setminus \mathrm{Int}N(\alpha) \cong \left\{ \begin{array}{ll}
S_{g-1}^{p+2} \ & (\alpha: \mbox{ non-separating}), \\
S_{g_1}^{p_1} \sqcup S_{g_2}^{p_2}  \ & (\alpha: \mbox{ separating}), \\ 
\end{array} \right.
\end{eqnarray*}
where $g_{1}$, $p_{1}$, $g_{2}$ and $p_{2}$ are natural numbers satisfying 
 \begin{itemize}
 \item[$\bullet$] $g_1 + g_2 = g$, $g_1 \geq 0$, $g_2 \geq 0$, 
 \item[$\bullet$] $p_1 + p_2 = p + 2$, $p_1 \geq 1$, $p_2 \geq 1$,
 \item[$\bullet$] if $g_i=0$, then $p_i \geq 3$ {\rm (}$i=1,2${\rm)}.
 \end{itemize}
 
 \item[(2)] If $\delta$ is an arc on $S_{g}^{p}$, then 
 \begin{eqnarray*}
S_{g}^{p} \setminus \mathrm{Int}N(\delta) \cong \left\{ \begin{array}{ll}
S_{g-1}^{p+1} \ & (\delta: \mbox{ non-separating}), \\
S_{g_1}^{p_1} \sqcup S_{g_2}^{p_2}  \ & (\delta: \mbox{ separating}), \\ 
\end{array} \right.
\end{eqnarray*}
where $g_{1}$, $p_{1}$, $g_{2}$ and $p_{2}$ are natural numbers satisfying 
 \begin{itemize}
 \item[$\bullet$] $g_1 + g_2 = g$, $g_1 \geq 0$, $g_2 \geq 0$,
 \item[$\bullet$] $p_1 + p_2 = p + 1$, $p_1 \geq 1$, $p_2 \geq 1$,
  \item[$\bullet$] if $g_i=0$, then $p_i \geq 2$ {\rm (}$i=1,2${\rm )}.
 \end{itemize}
\end{enumerate}
\end{lemma}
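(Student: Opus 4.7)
The plan is to deduce both parts from additivity of the Euler characteristic and a boundary-counting argument, combined with the essentiality hypotheses recalled at the start of the section.

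For part (1), I would first observe that a regular neighbourhood $N(\alpha)$ of a simple closed curve is an annulus, so $\chi(N(\alpha))=0$ and the frontier $N(\alpha)\cap (S_{g}^{p}\setminus \mathrm{Int}N(\alpha))$ is a pair of circles of total Euler characteristic $0$. Inclusion--exclusion therefore gives $\chi(S_{g}^{p}\setminus \mathrm{Int}N(\alpha))=\chi(S_{g}^{p})=2-2g-p$. Cutting introduces exactly two new boundary circles, one on each side of $\alpha$, so the total number of boundary circles after cutting is $p+2$. If $\alpha$ is non-separating, the complement is connected, and solving $2-2g'-p'=2-2g-p$ with $p'=p+2$ forces $g'=g-1$, giving $S_{g-1}^{p+2}$. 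If $\alpha$ is separating, the complement splits as $S_{g_1}^{p_1}\sqcup S_{g_2}^{p_2}$, and additivity of $\chi$ yields $g_1+g_2=g$ and $p_1+p_2=p+2$, with each $p_i\geq 1$ since each component inherits at least one of the two new circles. The essentiality of $\alpha$ then rules out a disk component (as $\alpha$ does not bound a disk) and an annulus component (as $\alpha$ is not boundary-parallel), excluding the cases $(g_i,p_i)=(0,1)$ and $(g_i,p_i)=(0,2)$ and yielding $p_i\geq 3$ whenever $g_i=0$.

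For part (2), $N(\delta)$ is now a disk with $\chi=1$ and meets the complement in two arcs of total Euler characteristic $2$, so inclusion--exclusion yields $\chi(S_{g}^{p}\setminus \mathrm{Int}N(\delta))=\chi(S_{g}^{p})+1=3-2g-p$. I would then track the boundary carefully: the boundary circle containing $\partial\delta$ is severed at the two endpoints and the two parallel copies of $\delta$ are spliced in, and the resulting case analysis shows that the number of boundary circles increases by exactly one, so the complement carries $p+1$ boundary circles in total. Combining this with the Euler characteristic determines the topological type as $S_{g-1}^{p+1}$ in the non-separating case, and in the separating case produces components with $g_1+g_2=g$, $p_1+p_2=p+1$, and each $p_i\geq 1$. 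Essentiality of $\delta$ (not isotopic rel $\partial\delta$ into $\partial S$) then excludes a disk component of the complement, since such a disk would realise $\delta$ as a boundary-parallel arc, so $(g_i,p_i)\neq(0,1)$ and $p_i\geq 2$ whenever $g_i=0$.

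The main obstacle is not the Euler characteristic bookkeeping, which is standard, but the careful case analysis in (2) of how $\partial N(\delta)$ recombines with the ambient boundary of $S_{g}^{p}$; one must verify that the two parallel copies of $\delta$ together with the severed boundary arcs produce exactly one extra boundary circle, whether or not $\delta$ separates. Once this combinatorial step is justified, the non-separating formula and the separating bookkeeping follow mechanically, and invoking essentiality closes out the lower bounds on the $p_i$.
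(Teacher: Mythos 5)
Your overall strategy (additivity of the Euler characteristic, counting boundary circles, then using essentiality to exclude disk and annulus pieces) is exactly the route the paper has in mind -- the authors only say the lemma "can be proved by computing Euler characteristic" -- and your part (1) and your treatment of the separating case in (2) are fine. However, the step you yourself single out as the main obstacle in part (2) is resolved incorrectly: it is \emph{not} true that cutting along an essential arc always increases the number of boundary circles by one. If $\partial\delta$ meets two \emph{distinct} boundary circles $C_1$ and $C_2$, then after removing $\mathrm{Int}N(\delta)$ the two severed circles and the two parallel copies of $\delta$ recombine into a \emph{single} boundary circle, so the count drops from $p$ to $p-1$. Such an arc is always essential (it cannot be isotoped rel $\partial\delta$ into $\partial S$, since its endpoints lie on different components) and always non-separating, so the hypotheses do not exclude it. Concretely, the spanning arc of the annulus $S_0^2$ cuts it into a disk $S_0^1$, whereas your (and the lemma's) formula would predict $S_{-1}^{3}$; more generally, for an arc joining two distinct boundary circles of $S_g^p$ one gets $\chi = 3-2g-p$ with $p-1$ boundary circles, i.e.\ $S_g^{p-1}$, not $S_{g-1}^{p+1}$.

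So the correct bookkeeping splits the non-separating case into two sub-cases: if both endpoints of $\delta$ lie on one boundary circle, the cut surface is $S_{g-1}^{p+1}$ as you computed; if they lie on two distinct circles, it is $S_g^{p-1}$. (A separating arc necessarily has both endpoints on one circle, so your separating analysis stands.) This means the statement as printed is itself slightly deficient in the same sub-case; the omission is harmless for the paper's later arguments, because the pair $(g', p') = (g, p-1)$ already appears among the possibilities the authors enumerate when they apply the lemma (it also arises from the separating case with an $S_0^2$ piece), but your proof as written asserts a false boundary count and needs this case distinction to be repaired.
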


\begin{lemma}
Let $S=S_{g}^{p}$ be a compact connected orientable surface. 
Then the following (1) and (2) hold.  
\begin{enumerate}
 \item[(1)] Let $\{ \alpha_1, \ldots, \alpha_m \}$ be a linear chain on $S$. 
 Then $\{ \alpha_1, \ldots, \alpha_{m-2} \}$ is a linear chain on the connected component $S'$ of $S \setminus \mathrm{Int}N(\alpha_m)$.
Moreover, $(\{ \alpha_1, \ldots, \alpha_{m-2} \}, \delta)$ is a chained pair on $S'$, where $\delta$ is any connected component of $\alpha_{m-1} \cap S'$ intersecting with $\alpha_{m-2}$. 
 \item[(2)] Let $(\{ \alpha_1, \ldots, \alpha_m \}, \delta)$ be a chained pair on $S$. 
 \begin{itemize}
  \item[(i)] If some $\alpha_i$ is isotopic into $\partial S \cup \delta$, then $i=1$ and $m=2$. 
  \item[(ii)] If $m \geq 3$, then $(\{ \alpha_1, \ldots, \alpha_{m-1}\}, \delta')$ is a chained pair on a connected component $S'$ of $S\setminus \mathrm{Int}N(\delta)$, where $\delta'$ is any connected componen of $\alpha_{m}\cap S'$ intersecting with $\alpha_{m-1}$.
 \end{itemize} 
\end{enumerate}
\label{key_property_of_chains}
\end{lemma}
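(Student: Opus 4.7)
The argument splits into the three claims. Parts (1) and (2)(ii) are parallel surface-cutting computations, while (2)(i) is the delicate core that justifies them.

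For (1), the curves $\alpha_1, \ldots, \alpha_{m-2}$ are all disjoint from $\alpha_m$ by the non-consecutive condition of the linear chain, hence survive in $S \setminus \mathrm{Int}N(\alpha_m)$. Since $\alpha_i$ and $\alpha_{i+1}$ intersect for $i \leq m-3$, these surviving curves all lie in a single connected component $S'$. Each $\alpha_i$ remains essential in $S'$: a boundary-parallel curve in $S'$ would have to be parallel to a component of $\partial N(\alpha_m)$, which would force $\alpha_i$ to be isotopic to $\alpha_m$ in $S$ and contradict the non-isotopy condition of the chain. Pairwise non-isotopy and intersection numbers of consecutive pairs are inherited from $S$. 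For the chained-pair structure, take $\delta$ to be any component of $\alpha_{m-1} \cap S'$ meeting $\alpha_{m-2}$; such a component exists since $\alpha_{m-1}$ meets $\alpha_{m-2} \subset S'$, and its essentiality, minimal position with $\alpha_{m-2}$, and disjointness from $\alpha_1, \ldots, \alpha_{m-3}$ all descend from the corresponding facts about $\alpha_{m-1}$.

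Part (2)(ii) is analogous: cut $S$ along $\delta$. The curves $\alpha_1, \ldots, \alpha_{m-1}$ are disjoint from $\delta$ by the chained-pair definition, so they survive in $S \setminus \mathrm{Int}N(\delta)$ and lie in one connected component $S'$ by the same connectivity argument. Their essentiality in $S'$ is precisely what (2)(i) guarantees when $m \geq 3$. The arc $\delta'$ is any component of $\alpha_m \cap S'$ meeting $\alpha_{m-1}$, and its required properties are inherited from $\alpha_m$.

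The real content is (2)(i). I interpret ``$\alpha_i$ is isotopic into $\partial S \cup \delta$'' as saying that $\alpha_i$ is isotopic in $S$ to a simple closed curve lying in a small regular neighborhood of the $1$-complex $\partial S \cup \delta$; equivalently, $\alpha_i$ becomes boundary-parallel after cutting $S$ along $\delta$. Since $\alpha_i$ is essential in $S$, the isotoped copy cannot be parallel to a component of $\partial S$, so it must be parallel to some ``inner'' boundary curve $\gamma$ of $N(\partial S \cup \delta)$, built from arcs of $\partial S$ together with both sides of $\delta$. The case $i = m$ is immediately excluded, since such a $\gamma$ can be chosen disjoint from $\delta$, forcing $\alpha_m$ to be isotopic to a curve disjoint from $\delta$ and contradicting $i(\alpha_m, \delta) > 0$. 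For $i < m$, the annulus in $S$ between $\alpha_i$ and $\gamma$ contains $\delta$ on the $\gamma$-side, and the remaining task is to check that no $\alpha_{i+1}$ crossing $\alpha_i$ essentially while remaining disjoint from $\delta$ can coexist with the chain and chained-pair conditions, except in the degenerate case $i=1$ and $m=2$. This topological case analysis, in particular ruling out $m \geq 3$, is the main obstacle; the key input is that any such $\alpha_{i+1}$ would be forced to pierce the annulus between $\alpha_i$ and $\gamma$ and then return, and the disjointness of the later $\alpha_j$ from $\delta$ collapses the possible configurations to the allowed degenerate one.
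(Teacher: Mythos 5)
Parts (1) and (2)(ii) of your proposal follow the same route as the paper (curves disjoint from $\alpha_m$, resp.\ $\delta$, survive and stay essential after cutting; essentiality of the new arc comes from minimal position of $\alpha_{m-1}$ and $\alpha_m$; essentiality in (2)(ii) is fed by (2)(i)), and those parts are fine. The genuine gap is in (2)(i): you correctly isolate the key statement but then explicitly leave it unproved (``the remaining task is to check\dots'', ``the main obstacle''), and the heuristic you offer does not substitute for it. The step that must be carried out is: if $\alpha_i$ is isotopic into a regular neighbourhood of $\partial S \cup \delta$, then \emph{every} closed curve which meets $\alpha_i$ in minimal position with nonzero geometric intersection must meet $\delta$. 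The paper proves this by the case analysis you only gesture at: either $\partial\delta$ lies on a single boundary component $C$, in which case $\alpha_i$ cobounds an annulus $A$ whose other boundary is made of arcs of $\delta$ and of $\partial S$, or $\partial\delta$ joins two components $C_1$ and $C_2$, in which case $\alpha_i$ bounds a twice-holed disk $D$ together with $C_1\sqcup C_2$. In either case a closed curve $\beta$ disjoint from $\delta\cup\partial S$ that crosses $\alpha_i$ has every arc of $\beta\cap A$ (resp.\ $\beta\cap D$) with both endpoints on $\alpha_i$, and an innermost such arc cuts off a bigon with $\alpha_i$, contradicting minimal position. Your ``pierce the annulus and then return'' remark is exactly this bigon argument, but ``collapses the possible configurations to the allowed degenerate one'' is an assertion, not a proof; as written, the core of (2)(i) is missing.

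There is also a logical slip in how you would deduce $i=1$, $m=2$ from the key claim: you phrase the contradiction only in terms of $\alpha_{i+1}$, which fails precisely when $i=m-1$ and $m\geq 3$, because there $\alpha_{i+1}=\alpha_m$ \emph{does} meet $\delta$ and yields no contradiction. The correct deduction uses both neighbours: if $i\geq 2$, then $\alpha_{i-1}$ crosses $\alpha_i$ in minimal position yet is disjoint from $\delta$ (since $i-1\leq m-1$), contradicting the claim; if $i=1$ and $m\geq 3$, then $\alpha_2$ plays this role. Only $i=1$, $m=2$ survives, since then the sole curve crossing $\alpha_1$ is $\alpha_2=\alpha_m$, which is allowed to meet $\delta$. (Your exclusion of $i=m$ via $i(\alpha_m,\delta)>0$ is fine, though it also follows from the same claim applied to $\alpha_{m-1}$.) With the bigon argument supplied and the neighbour analysis corrected, your outline matches the paper's proof.
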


We can prove Lemma~\ref{cutting_surfaces} by computing Euler characteristic, and so we only prove Lemma~\ref{key_property_of_chains}. 

\begin{proof}[Proof of Lemma~\ref{key_property_of_chains}]
(1) The first assertion follows from the fact that none of essential closed curves $\alpha_1, \ldots, \alpha_{m-2}$ is isotopic to $\alpha_{m}$, and hence $\alpha_1, \ldots, \alpha_{m-2}$ are essential in $S \setminus \mathrm{Int}N(\alpha_m)$. 
The second assertion follows from the fact that $\alpha_{m-1}$ and $\alpha_{m}$ are in minimal position  and hence any component $\delta$ of $\alpha_{m-1} \cap S'$ is essential. 

(2-i) Suppose that a curve $\alpha_i$ is isotopic into $\partial S \cup \delta$. 
Then according to whether $\partial \delta$ lies in a single boundary component $C$ or $\partial \delta$ joins two boundary components $C_1$ with $C_2$, we have the following. 
\begin{enumerate}
 \item[(a)] If $\delta$ joins a boundary component $C$
to itself, then there is an annulus $A$ in $S$ such that $ \partial A \subset \alpha_i \sqcup (\delta \cup  \partial S)$ (see Figure \ref{annulus_disk} (a)). 
 \item[(b)] If $\delta$ joins two boundary components $C_1$ and $C_2$, then there is a twice-holed disk $D$ such that $ \partial D = \alpha_i \sqcup C_1 \sqcup C_2$ (see Figure \ref{annulus_disk} (b)). 
\end{enumerate}
\begin{figure}
\centering
\includegraphics[clip, scale=0.35]{./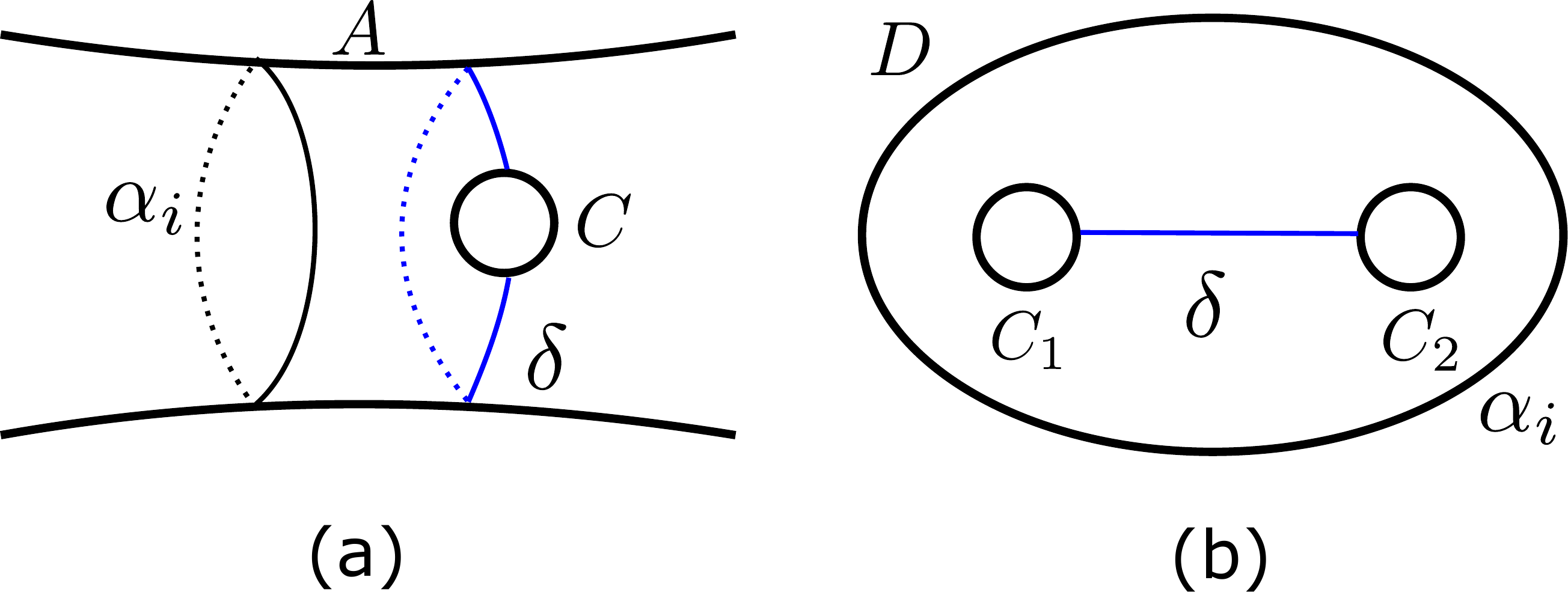}
\caption{\label{annulus_disk}}
\end{figure}
In each case any closed curve in $S$ which intersects with $\alpha_i$ non-trivially and minimally must intersect with $\delta$. 
Hence, we have $i=1$ and $m=2$. 

(2-ii) Suppose $m \geq 3$. 
Then by (2-i), $\alpha_1, \ldots, \alpha_{m-1}$ are essential in $S'$, and so $\{ \alpha_1, \ldots, \alpha_{m-1}\}$ is a linear chain on $S'$. 
Moreover, since $\alpha_{m-1}$ and $\alpha_{m}$ are in minimal position, $\delta'$ is essential in $S'$. 
Thus, $(\{ \alpha_1, \ldots, \alpha_m-1\}, \delta')$ is a chained pair on $S'$. 
\end{proof}


Since $S_{0}^{3}$ does not contain an essential simple closed curve and since any two mutually non-isotopic curve must intersect in $S_{0}^{4}$ (see Figure \ref{lc_0n}), we obtain the following lemma.

\begin{lemma}
We have $\ell(S_{0}^{3}) = 0$ and $\ell(S_{0}^{4}) = 2$. 
\label{0-4}
\end{lemma}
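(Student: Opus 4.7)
The plan is to argue both equalities directly from the classification of simple closed curves on spheres with few holes.

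First, for $S_0^3$ (the pair of pants), I would recall the standard fact that every simple closed curve on $S_0^3$ either bounds a disk or is isotopic to one of the three boundary components. Consequently $S_0^3$ contains no essential simple closed curve, so there cannot exist a linear chain of length $\geq 1$. This yields $\ell(S_0^3)=0$.

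Next, for $S_0^4$, I would use the well-known classification of essential simple closed curves on the four-holed sphere: up to isotopy, such a curve is determined by the partition of the four boundary components into two pairs (so there are exactly three isotopy classes of ``topological types''). Two essential simple closed curves $\alpha$ and $\beta$ on $S_0^4$ can be realized disjointly if and only if they induce the same partition, i.e., if and only if they are isotopic. Thus any two distinct (non-isotopic) essential simple closed curves on $S_0^4$ have positive geometric intersection number; this is what Figure~\ref{lc_0n} illustrates.

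From this the lower bound $\ell(S_0^4)\geq 2$ is immediate: pick any two non-isotopic essential curves $\alpha_1,\alpha_2$ and put them in minimal position; the pair $\{\alpha_1,\alpha_2\}$ is then a linear chain of length $2$. For the upper bound, suppose for contradiction that $\{\alpha_1,\alpha_2,\alpha_3\}$ were a linear chain of length $3$ on $S_0^4$. By definition $\alpha_1$ and $\alpha_3$ are non-isotopic and disjoint, but by the classification above any two non-isotopic essential simple closed curves on $S_0^4$ must intersect. This contradiction gives $\ell(S_0^4)\leq 2$, hence $\ell(S_0^4)=2$.

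The only real step is invoking the classification of essential curves on $S_0^4$; no actual calculation is required, so I do not expect a genuine obstacle here — the lemma is essentially a base case recorded for the induction that drives Theorem~\ref{main_thm_linear_chain}.
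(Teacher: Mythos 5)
Your overall strategy is exactly the paper's: $S_0^3$ carries no essential simple closed curve, and on $S_0^4$ any two non-isotopic essential simple closed curves must intersect, which gives both the lower bound (any two non-isotopic essential curves in minimal position form a chain of length $2$) and the upper bound (a chain of length $3$ would force $\alpha_1$ and $\alpha_3$ to be disjoint and non-isotopic). The paper records precisely these two facts, with a figure in place of an argument, so your proof matches its route.

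However, the justification you give for the key fact about $S_0^4$ is wrong as stated. It is false that an essential simple closed curve on the four-holed sphere is determined up to isotopy by the induced partition of the four boundary components into two pairs: there are infinitely many isotopy classes (they are parametrized by slopes, as in the Farey complex for $S_{0,4}$), and two curves inducing the same partition are in general neither isotopic nor realizable disjointly (distinct slopes $p/q$ and $r/s$ have geometric intersection number $2\lvert ps-qr\rvert>0$). So the claim ``disjointly realizable if and only if same partition if and only if isotopic'' collapses three inequivalent conditions. The conclusion you need --- that non-isotopic essential curves on $S_0^4$ always intersect --- is nevertheless true, and a correct two-line proof is: if $\beta$ is essential and disjoint from the essential curve $\alpha$, then $\beta$ lies in one of the two components of $S_0^4$ cut along $\alpha$, each of which is a three-holed sphere; an essential curve there is isotopic to a boundary component, and since the boundary components are either boundary curves of $S_0^4$ or parallel copies of $\alpha$, essentiality of $\beta$ forces $\beta$ to be isotopic to $\alpha$. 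With that replacement (or with the slope/intersection-number formula) your argument is complete; the rest of your reasoning, including the $S_0^3$ case and the length-$3$ contradiction, is fine.
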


\begin{figure}
\centering
\includegraphics[clip, scale=0.35]{./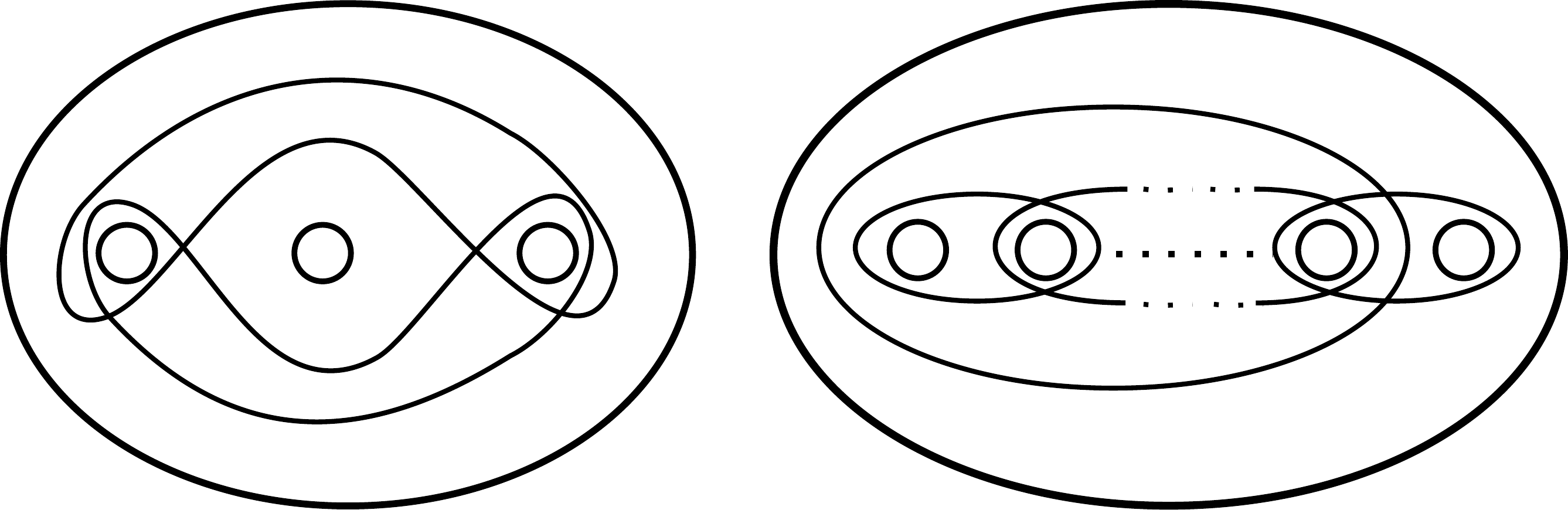}
\caption{Linear chains on $S_{0}^{4}$ (left-hand side) and $S_{0}^{p}$ (right-hand side), where the small circles in the both left and right-hand sides are three and $p-1$ boundary components of $S_{0}^{4}$ and $S_{0}^{p}$, respectively. \label{lc_0n}}
\end{figure}

For $p\geq 5$, we will prove the following lemma.

\begin{lemma}\label{key_property_of_chains_in_punctured_disks}
Suppose $p \geq 5$. 
Then the following (1) and (2) hold.  
\begin{enumerate}
 \item[(1)] $\ell(S_{0}^{p}) = p-1$. 
 \item[(2)] If the pair of a linear chain $\{ \alpha_1, \ldots, \alpha_{m} \}$ and an arc $\delta$ is chained on $S_{0}^{p}$, then $m \leq p -2$. 
\end{enumerate}
\label{0-n}
\end{lemma}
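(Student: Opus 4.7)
The plan is to prove (1) and (2) simultaneously by induction on $p\geq 5$, using Lemma~\ref{0-4} as the external input that handles the boundary case in which the cut produces a four-holed sphere. The lower bound in (1) comes from an explicit construction, and both upper bounds are obtained by cutting $S_0^p$ along $\alpha_m$ or $\delta$ via Lemma~\ref{key_property_of_chains} and feeding the result back into the inductive hypothesis on a sphere with strictly fewer boundary components.

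\emph{Lower bound of (1).} I would label the boundaries $B_0,\ldots,B_{p-1}$ and, for $i\in\{1,\ldots,p-1\}$, take $\alpha_i$ to be an essential simple closed curve separating $\{B_{i-1},B_i\}$ from the other $p-2$ boundaries. When $|i-j|\geq 2$ the pairs $\{B_{i-1},B_i\}$ and $\{B_{j-1},B_j\}$ are disjoint, so $\alpha_i$ and $\alpha_j$ may be realized disjointly; when $|i-j|=1$ the partitions of $\{B_0,\ldots,B_{p-1}\}$ induced by $\alpha_i$ and $\alpha_j$ are incompatible (this uses $p\geq 4$), so they must intersect essentially. Hence $\alpha_1,\ldots,\alpha_{p-1}$ is a linear chain of length $p-1$.

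\emph{Upper bound of (1) and proof of (2).} The case $m\leq 2$ is trivial since $p-2\geq 3$, so assume $m\geq 3$. For (1), cut along $\alpha_m$: since every essential closed curve in $S_0^p$ is separating, Lemma~\ref{cutting_surfaces} yields $S_0^p\setminus\mathrm{Int}N(\alpha_m)=S_0^{p_1}\sqcup S_0^{p_2}$ with $p_1+p_2=p+2$ and $p_1,p_2\geq 3$, while Lemma~\ref{key_property_of_chains}(1) produces a chained pair with chain of length $m-2$ on one piece $S'=S_0^{p_1}$. Essentiality of $\alpha_{m-2}$ on $S'$ rules out $p_1=3$, so $p_1\in\{4,5,\ldots,p-1\}$; either Lemma~\ref{0-4} (when $p_1=4$) or the inductive hypothesis on (2) (when $p_1\geq 5$) yields $m-2\leq p_1-2$, whence $m\leq p_1\leq p-1$. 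For (2), cut along $\delta$ and apply Lemma~\ref{key_property_of_chains}(2-ii): the resulting component $S'$ is $S_0^{p-1}$ when $\delta$ is non-separating, and $S_0^{p_1}$ with $p_1+p_2=p+1$ and $p_1\geq 4$ (by the essentiality of $\alpha_{m-1}$) when $\delta$ is separating; in both subcases $p_1\leq p-1$, and the same dichotomy (Lemma~\ref{0-4} versus induction) gives $m-1\leq p_1-2$, hence $m\leq p-2$.

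\emph{Main obstacle.} The subtle point is that (1) and (2) are only stated for $p\geq 5$, yet cutting can produce the piece $S_0^4$ that lies just outside this range. The role of Lemma~\ref{0-4} is precisely to supply the missing estimate $\ell(S_0^4)=2=4-2$ at this single value, so that the inductive step proceeds uniformly for all $p_1\in\{4,5,\ldots,p-1\}$ and the simultaneous induction closes.
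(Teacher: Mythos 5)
Your proof is correct and follows essentially the same route as the paper: the lower bound is the paper's explicit chain of curves each enclosing two consecutive boundary components (given there by a figure), and the upper bounds are obtained, exactly as in the paper, by cutting along $\alpha_m$ (resp.\ $\delta$), invoking Lemma~\ref{key_property_of_chains} and Lemma~\ref{cutting_surfaces}, and closing a simultaneous induction on $p$ with Lemma~\ref{0-4} supplying the $S_0^4$ estimate. The only difference is organizational: the paper separates the base case $p=5$ from $p\geq 6$, whereas you treat all $p\geq 5$ uniformly via the dichotomy $p_1=4$ versus $p_1\geq 5$, which if anything makes the inductive step slightly more careful.
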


\begin{proof}[Proof of Lemma~\ref{key_property_of_chains_in_punctured_disks}]
The picture on the right hand side of Figure \ref{lc_0n} shows $ \ell(S_{0}^{p}) \geq p-1$. 

Case $p=5$. 

(1) Suppose that $\{ \alpha_1, \ldots, \alpha_m \}$ is a linear chain on $S_{0}^{5}$. 
Then we obtain a connected component $S_{g'}^{p'}$ of  $S_{0}^{5} \setminus \mathrm{Int}N(\alpha_m)$, which contains the linear chain $\{ \alpha_1, \ldots, \alpha_{m-2} \}$ by Lemma \ref{key_property_of_chains} (1). 
By Lemma \ref{cutting_surfaces} (1), $g'=0$ and $p' \leq 4$.  
Hence, Lemma \ref{0-4} implies $m-2 \leq 2$, namely, $m \leq 4$. 
Consequently, we have $\ell(S_{0}^{5}) = 4$. 

(2) Suppose that the pair of a linear chain $\{ \alpha_1, \ldots, \alpha_m \}$ and an arc $\delta$ is chained on $S_{0}^{5}$. 
We assume $m \geq 3$ and prove $m = 3$. 
By Lemma \ref{key_property_of_chains} (2), a connected component $S_{g'}^{p'}$ of $S_{0}^{5} \setminus \mathrm{Int}N(\delta)$, contains a linear chain of length $m-1$. 
By Lemma \ref{cutting_surfaces} (2), $g' = 0$ and $p' \leq 4$. 
Therefore Lemma \ref{0-4} implies that $m-1 \leq 2$, namely, $m \leq 3$, as required. 

Case $p \geq 6$. 

(1) Suppose that $\{ \alpha_1, \ldots, \alpha_m \}$ is a linear chain on $S_{0}^{p}$. 
Then a connected component $S_{g'}^{p'}$ of $S_{0}^{p} \setminus \mathrm{Int}N(\alpha_m)$ contains the chained pair $( \{ \alpha_1, \ldots, \alpha_{m-2} \} , \delta )$ by Lemma \ref{key_property_of_chains} (1). 
Since $g' = 0$ and $p' \leq p - 1$ by Lemma \ref{cutting_surfaces} (1), the induction hypothesis implies an inequality $m-2 \leq (p - 1) - 2 $ which means $m \leq p-1$. 

(2) We may assume $m \geq 3$. 
A connected component $S_{g'}^{p'}$ of $S_{0}^{p} \setminus \mathrm{Int}N(\delta)$ contains a chained pair $(\{ \alpha_1, \ldots, \alpha_{m-1} \} , \delta')$ by Lemma \ref{key_property_of_chains} (2), where $\delta'$ is an arc derived from $\alpha_m$. 
Since $g'=0$ and $p' \leq p-1$, by the induction hypothesis we have an inequality $m-1 \leq (p-1) - 2$ which means $m \leq p - 2$. 
\end{proof}

In order to compute $\ell(S_{1}^{p})$, we first deal with the following two exceptions. 

\begin{lemma}
We have $\ell(S_{1}^{0})= \ell(S_{1}^{1})=2$. 
\label{1-1}
\end{lemma}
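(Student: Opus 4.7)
The plan is to establish the equality $\ell(S_1^0)=\ell(S_1^1)=2$ by matching easy lower bounds with a cutting argument for the upper bounds, in exactly the style already used in the earlier lemmas of the section.

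For the lower bound $\ell(\cdot)\geq 2$, I would exhibit an explicit linear chain of length $2$ on each surface: on the closed torus $S_1^0$, the standard meridian $\alpha_1$ and longitude $\alpha_2$ are non-isotopic essential curves in minimal position with $i(\alpha_1,\alpha_2)=1$, so $\{\alpha_1,\alpha_2\}$ is a linear chain of length $2$. Removing an open disk disjoint from both curves produces a linear chain of length $2$ on $S_1^1$.

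For the upper bound, I would argue by contradiction: suppose $\{\alpha_1,\alpha_2,\alpha_3\}$ is a linear chain on $S\in\{S_1^0,S_1^1\}$. By Lemma~\ref{key_property_of_chains}(1), $\{\alpha_1\}$ is a linear chain on some component $S'$ of $S\setminus\mathrm{Int}N(\alpha_3)$, so $\alpha_1$ must be essential in $S'$. The step that does the real work is identifying $S'$. I would combine Lemma~\ref{cutting_surfaces}(1) with the preliminary observation that neither $S_1^0$ nor $S_1^1$ admits an essential separating simple closed curve; this observation is a short check because the constraints $g_1+g_2=1$, $p_1+p_2\in\{2,3\}$, $p_i\geq 1$, and $p_i\geq 3$ whenever $g_i=0$ have no common solution in either case. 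Consequently $\alpha_3$ is non-separating, and Lemma~\ref{cutting_surfaces}(1) gives $S'\cong S_0^2$ when $S=S_1^0$, and $S'\cong S_0^3$ when $S=S_1^1$.

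To finish, I would invoke that neither the annulus $S_0^2$ nor the pair of pants $S_0^3$ contains an essential simple closed curve: for $S_0^3$ this is Lemma~\ref{0-4}, and for $S_0^2$ it is elementary. Both cases therefore contradict the essentiality of $\alpha_1$ in $S'$, proving $\ell(S)\leq 2$. The only subtle point in the plan is the non-existence of essential separating curves on $S_1^0$ and $S_1^1$; the rest is a direct combination of Lemmas~\ref{cutting_surfaces}, \ref{key_property_of_chains}, and \ref{0-4}.
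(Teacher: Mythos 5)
Your proposal is correct and follows essentially the same route as the paper: exhibit a length-$2$ chain for the lower bound, then for the upper bound cut along the last curve of a hypothetical length-$3$ chain, identify the cut surface via Lemma~\ref{cutting_surfaces}(1), and contradict Lemma~\ref{key_property_of_chains}(1) using Lemma~\ref{0-4} (the paper writes this out for $S_1^1$, obtaining $S_0^3$, and leaves $S_1^0$ as similar). The only additions on your side are the explicit justification that the last curve must be non-separating and the explicit annulus case for $S_1^0$, both of which the paper asserts or omits without harm.
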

\begin{proof}
We prove the assertion only in the case where $p=1$, because the case where $p=0$ can be treated similarly.  
In Figure \ref{lc_1n}, we can see the inequality $\ell(S_{1}^{1}) \geq 2$. 
\begin{figure}
\centering
\includegraphics[clip, scale=0.35]{./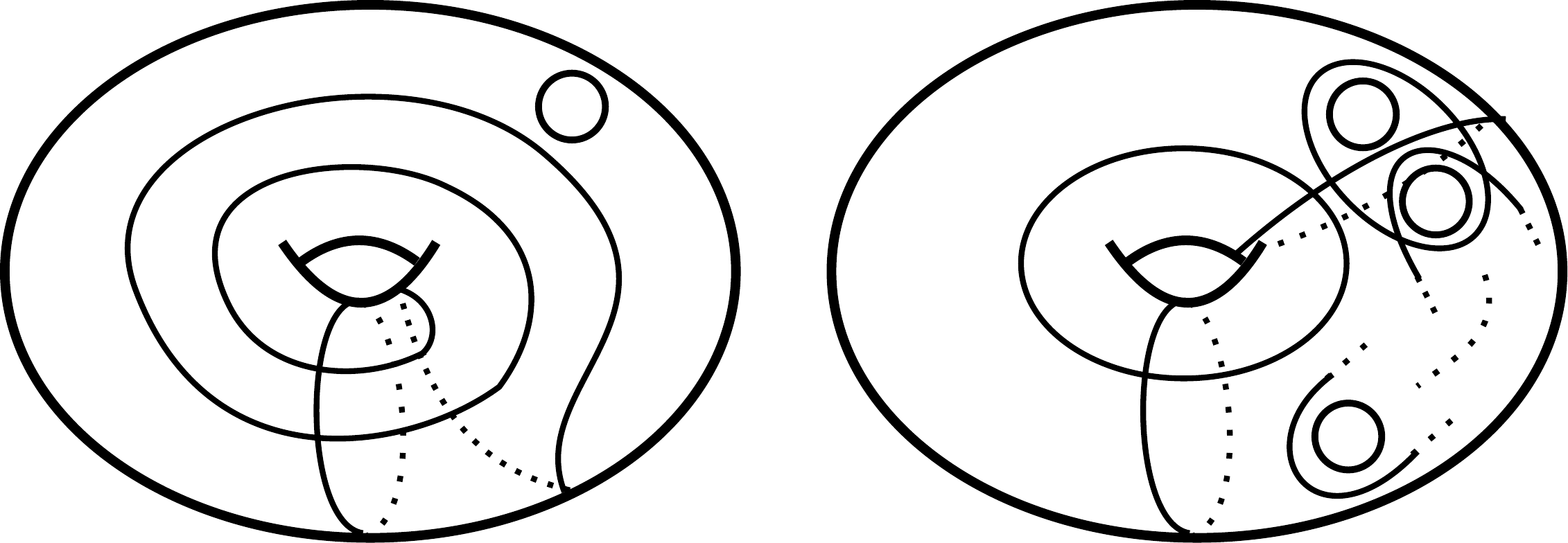}
\caption{Linear chains on $S_{1}^{1}$ (left-hand side) and $S_{1}^{p}$ (right-hand side), where the small circles in the both of $S_{1}^{1}$ and $S_{1}^{p}$ are the boundary components of them.}
\label{lc_1n}
\end{figure}
To see $\ell(S_{1}^{1}) \leq 2$, suppose that $S_{1}^{1}$ contains a linear chain of length $m$, $\{ \alpha_1, \ldots, \alpha_m \}$. 
Note that $\alpha_m$ must be a non-separating closed curve. 
Then the surface $S = S_{1}^{1} \setminus \mathrm{Int}N(\alpha_m)$ contains an essential simple closed curve $\alpha_{m-2}$ by Lemma \ref{key_property_of_chains} (1). 
However, since $S$ is homeomorphic to $S_{0}^{3}$, Lemma \ref{0-4} implies that $\alpha_{m-2}$ does not exist. 
Thus we have $m \leq 2$, and so $\ell(S_{1}^{1}) = 2$.
\end{proof}

\begin{lemma}
Suppose $p \geq 2$. 
Then the following (1) and (2) hold.  
\begin{enumerate}
 \item[(1)] We have $\ell(S_{1}^{p}) = p+2$. 
 \item[(2)] If the pair of a linear chain $\{ \alpha_1, \ldots, \alpha_{p} \}$ and an arc $\delta$ is chained on $S_{1}^{p}$, then $m \leq p+1$. 
\end{enumerate}
\label{1-n}
\end{lemma}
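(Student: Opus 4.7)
My plan is to prove parts (1) and (2) simultaneously by induction on $p \geq 2$, closely paralleling the proof of Lemma \ref{key_property_of_chains_in_punctured_disks}. The lower bound $\ell(S_{1}^{p}) \geq p+2$ is realised by the explicit chain displayed on the right-hand side of Figure \ref{lc_1n}. For the upper bounds I will cut along $\alpha_{m}$ for (1) or along $\delta$ for (2), appeal to the induction hypothesis on the components produced by Lemma \ref{cutting_surfaces}, and transport the chain structure via Lemma \ref{key_property_of_chains}; a crucial point is that Lemma \ref{key_property_of_chains}(1) upgrades the truncated chain to a chained pair, so I may always invoke the sharper chained-pair bound rather than the bare linear-chain bound in the inductive step of (1), avoiding an off-by-one loss.

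For the base case $p = 2$ of (1), cutting a length-$m$ chain along $\alpha_{m}$ produces either $S_{0}^{4}$ or $S_{1}^{1} \sqcup S_{0}^{3}$ by Lemma \ref{cutting_surfaces}(1), and each of these surfaces has linear-chain length at most $2$ by Lemmas \ref{0-4} and \ref{1-1}, so $m-2 \leq 2$ and hence $m \leq 4$. For the base case of (2), assume $m \geq 3$ and cut along $\delta$: the non-separating case yields $S_{0}^{3}$, which has no essential closed curve and therefore no chained pair of positive length, contradicting $m-1 \geq 2$; the separating case yields $S_{1}^{1} \sqcup S_{0}^{2}$, and since $S_{0}^{2}$ carries no essential closed curves, the chained pair must sit in $S_{1}^{1}$, bounded by $\ell(S_{1}^{1}) = 2$, so $m \leq 3$.

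For the inductive step $p \geq 3$, cutting along $\alpha_{m}$ produces either $S_{0}^{p+2}$, to which Lemma \ref{key_property_of_chains_in_punctured_disks}(2) applies and yields $m - 2 \leq p$, or a separating decomposition $S_{1}^{p_{1}} \sqcup S_{0}^{p_{2}}$ with $p_{1}+p_{2}=p+2$ and $p_{2} \geq 3$. In the latter case the chained pair lies in one piece, which is handled by the induction hypothesis on (2) when that piece is $S_{1}^{p_{1}}$ with $p_{1} \geq 2$ (giving $m-2 \leq p_{1}+1 \leq p$), by Lemma \ref{1-1} when $p_{1}=1$, by Lemma \ref{key_property_of_chains_in_punctured_disks}(2) when the piece is $S_{0}^{p_{2}}$ with $p_{2} \geq 5$, and trivially or via Lemma \ref{0-4} when $p_{2} \in \{3,4\}$; in every sub-case the resulting estimate is $m \leq p+2$. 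The argument for (2) in the inductive step is entirely parallel, cutting along $\delta$, using Lemma \ref{key_property_of_chains}(2)(ii) to produce a chained pair of length $m-1$ on a component of the cut, and drawing on the same catalogue of small-surface base cases.

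The principal obstacle is bookkeeping rather than any single hard estimate: the small surfaces $S_{0}^{2}$, $S_{0}^{3}$, $S_{0}^{4}$, $S_{1}^{0}$, $S_{1}^{1}$ all sit outside the reach of the induction hypothesis and must be absorbed through ad hoc appeals to Lemmas \ref{0-4} and \ref{1-1}; moreover, it is essential to upgrade the truncated chain to a chained pair via Lemma \ref{key_property_of_chains}(1) at every cut in (1), since using only the linear-chain bound on the smaller surface would lose one unit and cause the induction to fail.
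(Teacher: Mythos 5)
Your proposal is correct and follows essentially the same route as the paper: a simultaneous induction on $p$ proving (1) and (2) together, with the lower bound from Figure \ref{lc_1n}, cutting along $\alpha_m$ (resp.\ $\delta$) and passing to a chained pair via Lemma \ref{key_property_of_chains}, then invoking Lemma \ref{cutting_surfaces} together with Lemmas \ref{0-4}, \ref{0-n}~(2), \ref{1-1} and the induction hypothesis. Your explicit handling of the small components ($S_0^2$, $S_0^3$, $S_0^4$, $S_1^1$) is, if anything, slightly more detailed than the paper's, but the argument is the same.
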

\begin{proof}
Figure \ref{lc_1n} shows $\ell(S_{1}^{p}) \geq p + 2$ for any $p \geq 2$. 
We shall prove the remainder of the assertion of Lemma \ref{1-n} by induction on $p$. 

Case $p=2$.

(1) Notice that any linear chain of length $5$ on $S_{1}^{2}$ induces a linear chain of length $3$ on a surface isomorphic to one of $S_{0}^{3}$, $S_{0}^{4}$ and $S_{1}^{1}$ by Lemmas \ref{cutting_surfaces} (1) and  \ref{key_property_of_chains} (1). 
However, by Lemmas \ref{0-4} and \ref{1-1}, these surfaces does not contain a linear chain of length $3$. 
Hence, we have $\ell(S_{1}^{2}) = 4$. 

(2) Suppose that $(\{ \alpha_1, \ldots, \alpha_{m} \} , \delta)$ ($m \geq 3$) is the chained pair of a linear chain $\{ \alpha_1, \ldots, \alpha_m \}$ and an arc $\delta$ on $S_{1}^{2}$. 
Then a connected component $S$ of $S_{1}^{2} \setminus \mathrm{Int}N(\delta)$ contains the chained pair of a linear chain of length $m-1$ and an arc derived from $\alpha_{m}$ by Lemma \ref{key_property_of_chains} (2). 
The surface $S$ is homeomorphic to either $S_{0}^{3}$ or $S_{1}^{1}$ by Lemma \ref{cutting_surfaces} (2). 
By Lemmas \ref{0-4} and \ref{1-1}, in either case, we have $m-1 \leq 2$. 
Hence, $m \leq 3$. 

Case $p \geq 3$. 

(1) Suppose that $\{ \alpha_1, \ldots, \alpha_m \}$ is a linear chain on $S_{1}^{p}$. 
Then a connected component $S_{g'}^{p'}$ of $S_{1}^{p} \setminus \mathrm{Int}N(\alpha_{m})$ contains the chained pair of a linear chain of length $m-2$ and an arc derived from $\alpha_{m-1}$ by Lemma \ref{key_property_of_chains} (1). 
By Lemma \ref{cutting_surfaces} (1), we have either $g'=0, \ p' \leq p + 2$ or $g'=1, \ p' \leq p - 1$. 
In any case, Lemmas \ref{0-4} and \ref{0-n} (2) and the induction hypothesis imply that $m-2 \leq p$, namely, $m \leq p+2$. 

(2) Suppose that $(\{ \alpha_1, \ldots, \alpha_m \}, \delta)$ ($m \geq 3$) is a chained pair on $S_{1}^{p}$. 
Let $S_{g'}^{p'}$ be a connected component of $S_{1}^{p} \setminus \mathrm{Int}N(\delta)$, which contains a chained pair $(\{ \alpha_1, \ldots, \alpha_{m-1} \}, \delta')$, where $\delta'$ is an arc derived from $\alpha_m$ by Lemma \ref{key_property_of_chains} (2). 
Then, by Lemma \ref{cutting_surfaces} (2), we have either $g'=0, \ p' \leq p + 1$ or $g'=1, \ p' \leq p - 1$. 
Hence, Lemmas \ref{0-4}, \ref{0-n} (2) and \ref{1-1} and the induction hypothesis imply that $m-1 \leq p$. 
Thus $m \leq p + 1$, as desired. 
\end{proof}

\begin{lemma}
Suppose that $g \geq 2$ and $p \geq 0$.  
Then following (1) and (2) hold: 
\begin{enumerate}
 \item[(1)] $\ell(S_{g}^{p}) = 2g+p+1$. 
 \item[(2)] If the pair of a linear chain $\{ \alpha_1, \ldots, \alpha_{m} \}$ and an arc $\delta$ is chained on $S_{g}^{p}$, then $m \leq 2g+p-1$. 
\end{enumerate}
\label{2-n}
\end{lemma}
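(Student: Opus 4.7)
The plan is to prove (1) and (2) simultaneously by induction on the pair $(g,p)$ in lexicographic order, for $g\geq 2$. The cutting Lemmas \ref{cutting_surfaces} and \ref{key_property_of_chains} will reduce every case to a strictly smaller surface; whenever the result of cutting has genus $0$ or $1$, the required bounds come from the earlier Lemmas \ref{0-4}, \ref{0-n}, \ref{1-1}, and \ref{1-n}.

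For the lower bound $\ell(S_g^p)\geq 2g+p+1$, I will construct an explicit linear chain, generalizing the right-hand picture in Figure \ref{lc_1n}: inside a genus-$g$ subsurface of $S_g^p$, take the standard chain $c_1,c_2,\ldots,c_{2g+1}$ of $2g+1$ simple closed curves with $i(c_i,c_{i+1})=1$ and $c_i,c_j$ disjoint for $|i-j|\geq 2$, and then append $p$ further curves $c_{2g+2},\ldots,c_{2g+p+1}$, each encircling one boundary component and chosen to meet only the preceding curve in the chain.

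For the upper bound in (1), given a linear chain $\{\alpha_1,\ldots,\alpha_m\}$ with $m\geq 3$, Lemma \ref{key_property_of_chains}(1) yields a chained pair $(\{\alpha_1,\ldots,\alpha_{m-2}\},\delta)$ on some component $S'\cong S_{g'}^{p'}$ of $S_g^p\setminus \mathrm{Int}N(\alpha_m)$, and Lemma \ref{cutting_surfaces}(1) gives three possibilities: (a) $\alpha_m$ non-separating, so $(g',p')=(g-1,p+2)$; (b) $\alpha_m$ separating into two positive-genus sides, so $g'\leq g-1$ and $p'\leq p+1$; (c) $\alpha_m$ separating off a thrice-or-more-holed sphere, so $g'=g$ and $p'\leq p-1$. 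In each case $(g',p')$ is lex-smaller than $(g,p)$, so applying either the induction hypothesis (2) on $S_{g'}^{p'}$ or the corresponding earlier lemma when $g'\leq 1$ bounds the length of the chained pair, and the arithmetic check in each case delivers $m\leq 2g+p+1$. The proof of (2) is parallel: given a chained pair with $m\geq 3$, cut along $\delta$ using Lemmas \ref{key_property_of_chains}(2) and \ref{cutting_surfaces}(2) to extract a chained pair of length $m-1$ on an analogously smaller surface, and invoke the induction hypothesis for (2) (or the appropriate earlier lemma) to conclude $m\leq 2g+p-1$.

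The main delicate point, and the reason for using lexicographic rather than $(g+p)$-induction, is case (c) above: when the separating curve $\alpha_m$ (or the arc $\delta$) bounds a multi-holed sphere with the rest of the surface, the genus is not reduced, and the descent must come entirely from the strict decrease in $p$. All other subcases decrease the genus outright and would cooperate with any reasonable induction parameter; only case (c) forces the lexicographic choice.
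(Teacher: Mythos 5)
Your inductive upper-bound argument is essentially the paper's own proof: the same cutting Lemmas \ref{cutting_surfaces} and \ref{key_property_of_chains}, the same descent to a chained pair of length $m-2$ (resp.\ $m-1$) on a cut-open subsurface, with genus $0$ and $1$ disposed of by Lemmas \ref{0-4}, \ref{0-n}, \ref{1-1} and \ref{1-n}, and an effectively lexicographic induction (the paper runs an induction on $p$ at genus $2$ and then on the genus, which amounts to the same thing). Two small points there. First, your case (c) tacitly assumes that the chained pair lands in the positive-genus complementary component; the connected subsurface carrying $\alpha_1\cup\dots\cup\alpha_{m-2}\cup\delta$ may equally well be the multi-holed sphere cut off by $\alpha_m$ (so $g'=0$, $p'\leq p+1$), a sub-case that is harmless via Lemma \ref{0-n}(2) but must be listed; the same remark applies to the arc-cutting step in (2). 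Second, your closing remark misidentifies the delicate case: it is case (a), where cutting a non-separating curve lowers the genus but raises $p$ by two, that kills induction on $g+p$; case (c) merely rules out inducting on the genus alone. Lexicographic induction (or induction on $3g+p$) does cover all cases, as you say.

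The genuine gap is the lower bound. A simple closed curve ``encircling one boundary component'' is boundary-parallel, hence inessential and of geometric intersection number zero with every closed curve, so it can neither occur in a linear chain nor ``meet the preceding curve''; and if instead each of your $p$ extra curves is taken to enclose its boundary component together with some handles so as to become essential, then the non-consecutive ones can no longer be kept disjoint. So the configuration you describe does not exist, and the count ``one new curve per boundary component'' cannot be right in that naive sense: on $S_0^p$ and $S_1^p$ the boundary components contribute only $p-1$ curves ($\ell(S_0^p)=p-1$, $\ell(S_1^p)=3+(p-1)$). The construction that does realize $2g+p+1$ (and is what Figure \ref{lc_2n} depicts) appends to the standard chain $c_1,\dots,c_{2g+1}$ first a curve $c_{2g+2}$ cobounding a pair of pants with $c_{2g}$ and the first boundary component $\partial_1$ (a push of $c_{2g}$ across $\partial_1$, which meets $c_{2g+1}$ and nothing else and is not isotopic to $c_{2g}$), and then $p-1$ curves encircling consecutive \emph{pairs} of boundary components $\{\partial_i,\partial_{i+1}\}$, for a total of $(2g+1)+1+(p-1)=2g+p+1$; it is the genus $\geq 2$ that buys the single extra curve $c_{2g+2}$. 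With the lower-bound picture corrected and the missing sub-case added, your argument coincides with the paper's.
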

\begin{proof}
We first prove the assertion in the case where $g = 2 $. 

Case $g=2, p=0$. 

(1) Figure \ref{lc_20_21} shows $\ell(S_{2}^{0}) \geq 5$. 
\begin{figure}
\centering
\includegraphics[clip, scale=0.35]{./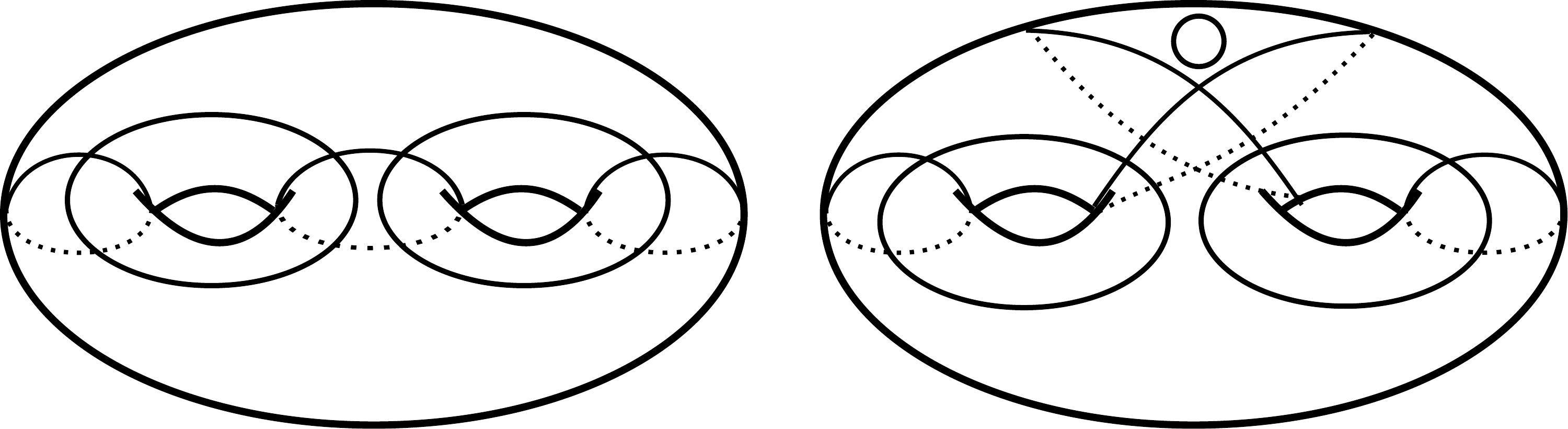}
\caption{Linear chains on $S_{2}^{0}$ (left-hand side) and $S_{2}^{1}$ (right-hand side), where the small circle in the right-hand side is the boundary component of $S_{2}^{1}$. \label{lc_20_21}}
\end{figure}
To see the converse inequality, suppose that $\{ \alpha_1, \ldots, \alpha_m \}$ is a linear chain on $S_{2}^{0}$. 
Then a connected component $S$ of $S_{2}^{0} \setminus \mathrm{Int}N(\alpha_m)$ contains a chained pair $(\{ \alpha_1, \ldots, \alpha_{m-2} \}, \delta)$ by Lemma \ref{key_property_of_chains} (1). 
Since $S$ is homeomorphic to either $S_{1}^{1}$ or $S_{1}^{2}$, we have $m-2 \leq 3$ by Lemma \ref{1-n} (2). 
Hence, $m \leq 5$. 

Since $S_{2}^{0}$ is a closed surface, there is no properly embedded arc on this surface, and therefore the assertion (2) makes no sense in this case. 

Case $g=2, p=1$.
 
(1) Figure \ref{lc_20_21} shows $\ell(S_{2}^{1}) \geq 6$. 
To prove $\ell(S_{2}^{1}) \leq 6$, suppose that $\{ \alpha_1, \ldots, \alpha_m \}$ is a linear chain on $S_{2}^{1}$. 
Then a connected component $S$ of $S_{2}^{1} \setminus \mathrm{Int}N(\alpha_m)$ contains the chained pair of a linear chain $(\{ \alpha_1, \ldots, \alpha_{m-2} \}, \delta)$ by Lemma \ref{key_property_of_chains} (1). 
Since $S$ is homeomorphic to the surface of genus $1$ with $p' (\leq 3)$ boundary components, Lemmas \ref{1-1} and \ref{1-n} (2) imply $m-2 \leq 4$, and therefore $m \leq 6$. 

(2) We may assume that $m \geq 3$. 
Let $S_{g'}^{p'}$ be a connected component of $S_{2}^{1} \setminus \mathrm{Int}N(\delta)$, which contains the chained pair $\{ \alpha_1, \ldots, \alpha_{m-1} \}, \delta')$ by Lemma \ref{key_property_of_chains} (2). 
Then, by Lemma \ref{cutting_surfaces} (2), we have $g'= 1, \ p' \leq 2$. 
Hence, Lemmas \ref{1-1} and \ref{1-n} (2) imply $m-1 \leq 3$, and therefore $m \leq 4$. 

Case $g = 2, \ p \geq 2$. 

(1) Figure \ref{lc_2n} shows $\ell(S_{2}^{p}) \geq 5 + p  = 2g + p + 1$. 
\begin{figure}
\centering
\includegraphics[clip, scale=0.40]{./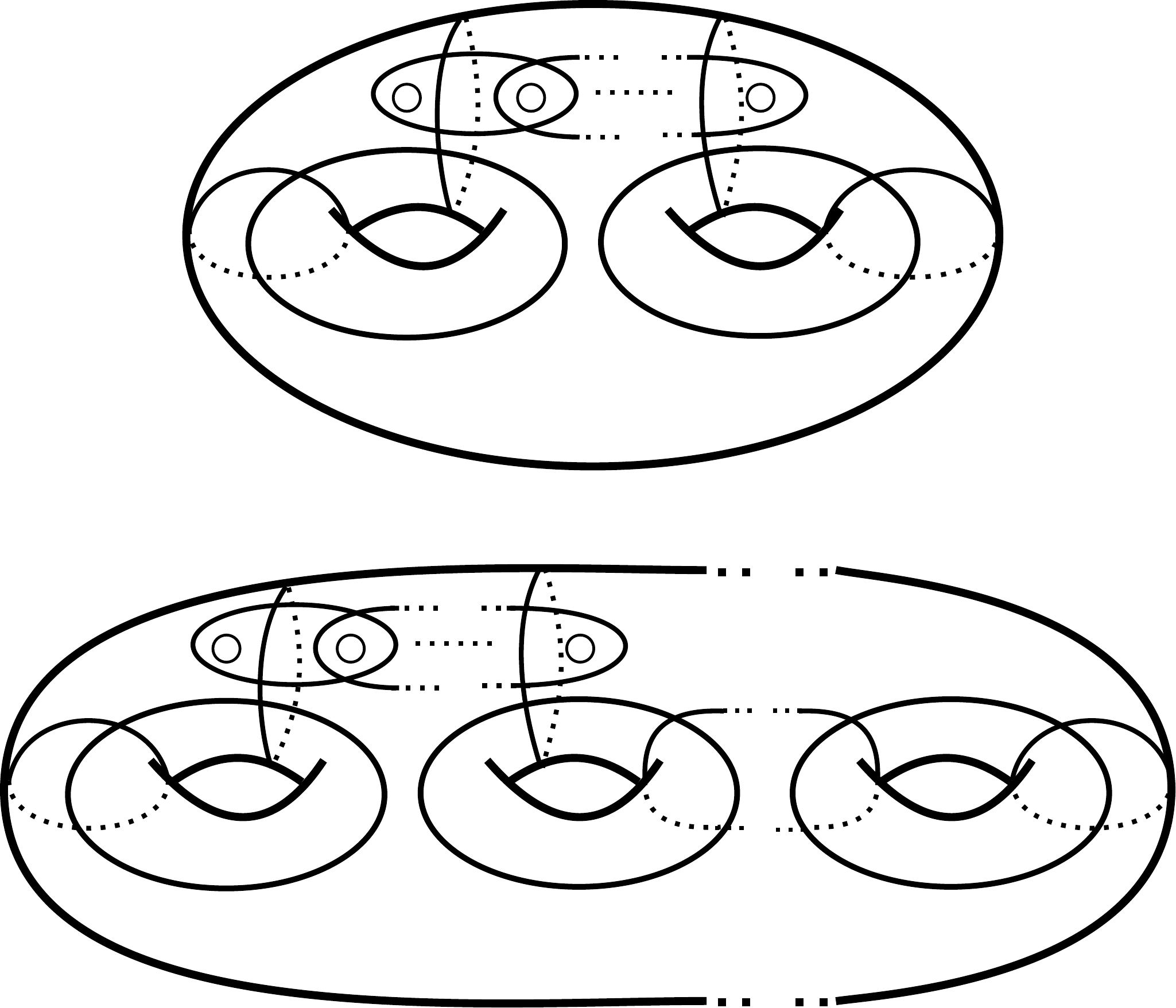}
\caption{Linear chains on $S_{2}^{p}$ and $S_{g}^{p}$ ($g \geq 3$), where the small circles are the boundary components. \label{lc_2n}}
\end{figure}
We now suppose that $\{ \alpha_1, \ldots, \alpha_m \}$ is a linear chain on $S_{2}^{p}$. 
Let $S_{g'}^{p'}$ be a connected component of $S_{2}^{p} \setminus \mathrm{Int}N(\alpha_m)$, which contains the chained pair $(\{ \alpha_1, \ldots, \alpha_{m-2} \}, \delta)$ in the assertion of Lemma \ref{key_property_of_chains} (1). 
By Lemma \ref{cutting_surfaces} (1), we have one of the following: 
 \begin{itemize}
 \item[$\bullet$] $g'=0, \ p' \leq p + 1$, 
 \item[$\bullet$] $g'=1, \ p' \leq p + 2$, or
 \item[$\bullet$] $g'=2, \ p' \leq p - 1$. 
 \end{itemize}
Now, Lemmas \ref{0-4}, \ref{0-n} (2), \ref{1-1}, \ref{1-n} (2) and the induction hypothesis imply that $m-2 \leq p+3$ in any case, namely, $m \leq p+5$. 

(2) Let $S_{g'}^{p'}$ be a connected component of $S_{2}^{p} \setminus \mathrm{Int}N(\delta)$ contains the chained pair $(\{ \alpha_1, \ldots, \alpha_{m-1} \}, \delta')$ in the assertion of  Lemma \ref{key_property_of_chains} (2). 
Then, by Lemma \ref{cutting_surfaces} (2), one of the following holds: 
 \begin{itemize}
 \item[$\bullet$] $g' \leq 0, \ p' \leq p$, 
 \item[$\bullet$] $g'=1, \ p' \leq p + 1$, or
 \item[$\bullet$] $g'=2, \ p' \leq p - 1$. 
 \end{itemize} 
Lemmas \ref{0-4}, \ref{0-n} (2), \ref{1-1}, \ref{1-n} (2) and the induction hypothesis show that $m-1 \leq p+2$, which means $m \leq p+3$. 

Case $g \geq 3$. 

(1) Figure \ref{lc_2n} shows $\ell(S_{g}^{p}) \geq 2g + p + 1$. 
To see the converse inequality, suppose that $\{ \alpha_1, \ldots, \alpha_m \}$ is a linear chain on $S_{g}^{p}$. 
Let $S_{g'}^{p'}$ be the connected component of the surface obtained from $S_{g}^{p}$ by cutting along the essential simple closed curve $\alpha_m$, which contains a chained pair $(\{ \alpha_1, \ldots, \alpha_{m-2} \}, \delta)$, in the assertion of Lemma \ref{key_property_of_chains} (1). 
Then, by Lemma \ref{cutting_surfaces} (1), one of the following holds: 
 \begin{itemize}
 \item[$\bullet$] $g' \leq g-2, \ p' \leq p + 1$, 
 \item[$\bullet$] $g'=g-1, \ p' \leq p + 2$, or
 \item[$\bullet$] $g'=g, \ p' \leq p - 1$. 
 \end{itemize}
For the pairs $(g', p')$ of the genera $g'$ and the numbers $p'$ of the punctures, we consider the function
\begin{eqnarray*}
f(g', p')= \left\{ \begin{array}{ll}
p'-2 &  (g'=0), \\
p'+1 & (g'=1), \\
2g'+p'-1 & (g'\geq 2). \\
\end{array} \right. 
\end{eqnarray*}
Note that this function represents the maximum length of the linear chains in the chained pairs on the surfaces $S_{g', p'}$. 
If $(g', p')$ runs all possible pairs of non-negative integers, the maxima of $f$ for $g'=0, 1, 2, \ldots, g$ ($g\geq 3$) are
\begin{itemize}
\item[$\bullet$] $f(0, p+1)=p-1$,
\item[$\bullet$] $f(1, p+1)=p+2$, 
\item[$\bullet$] $f(g', p') \leq 2g+p-4$ (when $g' \leq g-2$, $p' \leq p+1$), 
\item[$\bullet$] $f(g-1, p+2) = 2(g-1)+(p+ 2)-1=2g+p-1$ and
\item[$\bullet$] $f(g, p-1) = 2g + p - 2$, 
\end{itemize}
respectively.
Since $g \geq 3$, the value $f(g-1, p+2)=2g+p -1 \ (\geq p + 5)$ is the largest number among all such values. 
Thus, we have $m-2 \leq 2g + p - 1$, namely, $m \leq 2g + p + 1$, as desired. 

(2) Let $S_{g'}^{p'}$ be a connected component of  $S_{g}^{p} \setminus \mathrm{Int}N(\delta)$, which contains a chained pair $(\{ \alpha_1, \ldots, \alpha_{m-1} \}, \delta')$, in the assertion of Lemma \ref{key_property_of_chains} (1). 
Then, by Lemma \ref{cutting_surfaces} (2), one of the following holds: 
 \begin{itemize}
 \item[$\bullet$] $g' \leq g-2, \ p' \leq p$, 
 \item[$\bullet$] $g'=g-1, \ p' \leq p + 1$ and 
 \item[$\bullet$] $g'=g, \ p' \leq p-1 $. 
 \end{itemize}
If $(g', p')$ runs all possible pairs of numbers, the maxima of the function $f$ defined above for $g' =0, 1, 2, \ldots, g$ are
\begin{itemize}
\item[$\bullet$] $f(0, p) = p - 2$,
\item[$\bullet$] $f(1, p) = p + 1$, 
\item[$\bullet$] $f(g', p') \leq 2g+p-5$ (when $g' \leq g-2$, $p' \leq p$), 
\item[$\bullet$] $f(g-1, p+1)=2(g-1)+p+1-1=2g+p-2$ and
\item[$\bullet$] $f(g, p-1) = 2g + p - 2$, 
\end{itemize}
respectively. 
Since $g \geq 3$, the value $f(g-1, p+2) = 2g + p -2 \ (\geq p+4)$ is the largest number among all such values. 
Thus we have $m-1 \leq 2g + p - 2$, namely, $m \leq 2g + p - 1$, as desired. 
\end{proof}

By combining Lemmas \ref{0-4}, \ref{0-n}, \ref{1-1}, \ref{1-n} and \ref{2-n}, we obtain Theorem \ref{main_thm_linear_chain}.

\section{Path lifting \label{path_lifting_section}}

From Section~\ref{path_lifting_section}, we will mainly consider the orientable surfaces of genus $g$ with $p$ punctures and denote it by $S_{g, p}$. 
In this section we discuss embeddings of right-angled Artin groups from combinatorial view-point. 
Main purpose of this section is to introduce path lifting obstruction (Lemma \ref{path-lifting_lemma}) on embeddings of right-angled Artin groups into surface mapping class groups. 

We first note a basic fact on right-angled Artin groups without a proof.

\begin{lemma}
Let $\Gamma$ be a finite graph. 
Then the following hold. 
\begin{enumerate}
 \item[(1)] For any non-zero integer $m$, the power endomorphism $f \colon G(\Gamma) \rightarrow  G(\Gamma);$ $v \mapsto v^m$ is injective. 
 \item[(2)] If a group $K$ contains the right-angled Artin group $G(\Gamma)$, then any finite index subgroup of $K$ contains $G(\Gamma)$. 
\end{enumerate}
\label{power-homomorphism_injective}
\end{lemma}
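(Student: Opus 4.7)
The plan is to prove (1) first and then deduce (2) as a formal corollary.

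For part (1), the map $f$ is a well-defined endomorphism because every defining relator $[v,w]=1$ (for $\{v,w\}\notin E(\Gamma)$) forces $[v^m, w^m]=1$ in $G(\Gamma)$. For injectivity, the plan is to use the Servatius normal form: every $g \in G(\Gamma)$ is represented by a syllable word $v_{i_1}^{a_1}v_{i_2}^{a_2}\cdots v_{i_k}^{a_k}$ with each $a_j \neq 0$, reduced in the sense that no two syllables for the same generator can be brought adjacent by the commutation moves $xy \leftrightarrow yx$ for $\{x,y\}\notin E(\Gamma)$. The element $f(g)$ is then represented by $v_{i_1}^{m a_1}\cdots v_{i_k}^{m a_k}$. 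Because $m\neq 0$, the new exponents $m a_j$ remain nonzero and the sequence of generators $i_1,\dots,i_k$ is unchanged, so the combinatorial obstruction to bringing like syllables together is exactly the same as for the original word. Thus the image word is still reduced and nonempty whenever $g \neq 1$, and therefore $f(g) \neq 1$. An equivalent route is to invoke the well-known fact that for any nonzero integers $m_v$ ($v \in V(\Gamma)$), the subgroup $\langle v^{m_v} : v \in V(\Gamma)\rangle$ of $G(\Gamma)$ is itself a RAAG on $\Gamma$, which yields injectivity immediately.

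For part (2), suppose $G(\Gamma) \leq K$, let $K'$ be a finite index subgroup of $K$, and set $n = [K:K'] < \infty$. Viewing $V(\Gamma) \subset G(\Gamma) \subset K$, for each $v \in V(\Gamma)$ the right cosets $K'v^j$ ($j \in \mathbb{Z}$) take at most $n$ distinct values, so some nonzero power $v^{\kappa_v}$ lies in $K'$; in particular $v^{n!} \in K'$ for every $v$. Therefore $\langle v^{n!} : v \in V(\Gamma)\rangle$ is a subgroup of $K'$, and by (1) applied with $m=n!$ it is isomorphic to $G(\Gamma)$. Hence $G(\Gamma) \leq K'$.

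The only nontrivial work is the injectivity of $f$ in (1), which requires genuine structural input about RAAGs (a normal-form theorem, or an equivalent statement about the subgroup generated by powers of the standard generators). Once (1) is in hand, (2) follows formally by the coset-counting argument above.
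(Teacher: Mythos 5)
Your proof is correct; note that the paper records this lemma explicitly ``without a proof'' as a basic fact, so there is no argument in the paper to compare against, and your write-up supplies the standard one: the Servatius--Green normal form gives (1), since reducedness of a syllable word depends only on the generator sequence and the nonvanishing of exponents, and (2) follows by the coset-counting observation that $v^{n!}\in K'$ for $n=[K:K']$ together with (1) applied to $m=n!$. The only caution is that your ``equivalent route'' for (1) (that $\langle v^{m_v}\rangle$ is again a RAAG on $\Gamma$) is essentially the statement being proved, so it should be cited (e.g.\ to Koberda's theorem or to the normal-form argument you already gave) rather than used as an independent shortcut.
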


We now restate an important result due to Kim--Koberda \cite[Lemma 2.3]{Kim-Koberda14} on embeddings of right-angled Artin groups into surface mapping class groups. 

\begin{theorem}
Suppose that $G(\Lambda) \hookrightarrow \mathrm{Mod}(S_{g, p})$ with $\chi(S_{g, p}) < 0$. 
Then there is an embedding $\psi \colon G(\Lambda) \hookrightarrow \mathrm{Mod}(S_{g, p})$ satisfying the following property: 
\begin{enumerate}
 \item[$\bullet$] for each vertex $v \in V(\Lambda)$, there are Dehn twists $T_{v, 1}, \ldots, T_{v, m_v}$ and non-zero numbers $e(v, 1), \ldots, e(v, m_v)$ such that 
$$\psi(v) = [T_{v, 1}]^{e(v, 1)} \cdots [T_{v, m_v}]^{e(v, m_v)},$$ 
where $[T_{v, i}]$ and $[T_{v, j}]$ are commutative for all $1 \leq i \leq j \leq m_v$. 
\end{enumerate}
\label{Kim-Koberda_obst}
\end{theorem}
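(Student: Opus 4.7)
The plan is to take an arbitrary embedding and gradually normalize it using Nielsen--Thurston theory until each generator of $G(\Lambda)$ is a multi-twist. Starting from an embedding $\phi\colon G(\Lambda)\hookrightarrow\mathrm{Mod}(S_{g,p})$, I would first replace $\phi$ by the power embedding $\phi^{[N]}\colon v\mapsto\phi(v)^N$, which remains injective for any nonzero $N$ by Lemma \ref{power-homomorphism_injective}(1). By choosing $N$ large and divisible enough --- for instance so that every $\phi(v)^N$ lies in a pure finite-index subgroup of $\mathrm{Mod}(S_{g,p})$, such as the kernel of the action on $H_{1}(S_{g,p};\mathbb{Z}/k)$ for $k\geq 3$ --- I can arrange that each $\phi(v)^N$ is a pure mapping class and hence fixes its canonical reduction system $\sigma(v)$ componentwise.

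For each such pure element, the Nielsen--Thurston decomposition provides a canonical factorisation
\[
\phi(v)^N \;=\; T_v\cdot F_v,
\]
where $T_v$ is a multi-twist along $\sigma(v)$, $F_v$ is a product of pseudo-Anosov classes on the non-annular pieces of $S_{g,p}\setminus\sigma(v)$, and $T_v$ commutes with $F_v$. The commutation of $\phi(v)^N$ with $\phi(w)^N$ for non-adjacent vertices $v,w$ of $\Lambda$ translates, via standard Nielsen--Thurston compatibility, into $i(\sigma(v),\sigma(w))=0$ and compatibility of the supports of $F_v$ and $F_w$. In particular the multi-twists $\{T_v\}$ satisfy all the non-edge commutation relations of $\Lambda$, so by the universal property of right-angled Artin groups there is a well-defined homomorphism $\psi\colon G(\Lambda)\to\mathrm{Mod}(S_{g,p})$ with $\psi(v)=T_v$, and each $\psi(v)$ is automatically a product of commuting Dehn twist powers along the curves of $\sigma(v)$, matching the form stated in the theorem.

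The main obstacle is to verify that $\psi$ is still injective after the pseudo-Anosov factors have been discarded. Here my plan is to analyse the joint canonical reduction system $\Sigma=\bigcup_v\sigma(v)$ and the action of the image of $\phi^{[N]}$ on the complementary subsurfaces. Any element $g\in\ker\psi$ would be sent by $\phi^{[N]}$ to a mapping class supported entirely on the pseudo-Anosov pieces, hence acting trivially on the annuli of $\Sigma$; using that commuting pseudo-Anosov classes on a fixed subsurface share invariant foliations, one argues, after passing if necessary to a further power and restricting to an induced subgraph to which Koberda's embedding theorem still applies, that such a $g$ must already be trivial in $G(\Lambda)$. This step, in which the combinatorial rigidity of right-angled Artin groups interacts with the Nielsen--Thurston classification, is the most delicate part of the argument, and is the genuine content of the lemma beyond what the Nielsen--Thurston decomposition supplies mechanically.
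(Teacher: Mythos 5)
There is a genuine gap, and it is located exactly where you place the ``genuine content'': the step in which you discard the pseudo-Anosov factors and set $\psi(v)=T_v$ cannot be repaired. The twist part $T_v$ of the Nielsen--Thurston canonical form may simply be trivial: if some $\phi(v)$ is pseudo-Anosov on all of $S_{g,p}$ its canonical reduction system is empty (and stays empty under powers), and more generally a partial pseudo-Anosov with no twisting about the boundary of its support has $T_v=1$. In that case $\psi(v)=1$, so $\psi$ is not injective, and no passage to further powers, pure subgroups, or induced subgraphs can recover injectivity, because the information carried by the pseudo-Anosov pieces has been thrown away. A concrete counterexample to your kernel analysis: embed $F_2=G(P_2)$ by two independent pseudo-Anosov classes; then every $\sigma(v)=\emptyset$, your $\psi$ is the trivial homomorphism, and the claim that an element mapped into the ``pseudo-Anosov part'' must already be trivial in $G(\Lambda)$ fails for every nontrivial element. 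The theorem is really about how to \emph{replace} the pseudo-Anosov components by Dehn twists along curves contained in their supporting subsurfaces (so that the anti-commutation pattern of the pieces is preserved), not about deleting them.

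That replacement is what the paper's proof does, and it does not require any fresh Nielsen--Thurston analysis. It quotes Kim--Koberda's Lemma 2.3, which already yields an embedding $\psi_0$ with $\psi_0(v)=[f_{v,1}]^{e'(v,1)}\cdots[f_{v,m_v}]^{e'(v,m_v)}$ a product of commuting mapping classes, together with associated mutually non-isotopic curves $\alpha_{v,i}$ whose induced subgraph $\mathcal{Y}\leq\bar{\mathcal{C}}(S_{g,p})$ realizes the anti-commutation graph of the $[f_{v,i}]$ and such that $\eta\colon G(\mathcal{Y})\to\mathrm{Mod}(S_{g,p})$, $\alpha_{v,i}\mapsto[f_{v,i}]$, is injective. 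Koberda's embedding theorem then lets one substitute suitable high powers $[T_{\alpha_{v,i}}]^{s(v,i)}$ of the Dehn twists along these curves, and injectivity of the resulting map $\psi$ is obtained for free as the composition $\psi=\iota_0^{-1}\circ\eta^{-1}\circ\psi_0\circ\phi$ of injective homomorphisms (with $\phi$ a power endomorphism, injective by Lemma \ref{power-homomorphism_injective}(1)). If you want to salvage your outline, the fix is to associate to each irreducible component of $\phi(v)^N$ a curve in its support (the twisting curve for annular pieces, an essential curve inside the supporting subsurface for pseudo-Anosov pieces) and twist along those, then prove injectivity by a Koberda-type argument rather than by analyzing the kernel directly.
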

\begin{proof}
By \cite[Proof of Lemma 2.3]{Kim-Koberda14}, we have an embedding $\psi_0 \colon G(\Lambda) \hookrightarrow \mathrm{Mod}(S_{g, p})$ satisfying the following properties. 
\begin{enumerate}
 \item[$\bullet$] For any vertex $v \in V(\Lambda)$, there are self-homeomorphisms $f_{v, 1}, \ldots, f_{v, m_v}$ of $S_{g, p}$ and and non-zero numbers $e'(v, 1), \ldots, e'(v, m_v)$ such that 
 $$\psi_0 (v) = [f_{v, 1}]^{e'(v, 1)} \cdots [f_{v, m_v}]^{e'(v, m_v)}, $$  
 and that mapping classes $[f_{v, i}]$ and $[f_{v, j}]$ are commutative for all $1 \leq i \leq j \leq m_v$. 
 \item[$\bullet$] The set of mapping classes $X:= \{ [f_{v, i}] \mid v \in  V(\Lambda), \ 1 \leq i \leq m_v \}$ induces a set of mutually non-isotopic closed curves $Y:=\{ \alpha_{v, i} \mid v \in  V(\Lambda), \ 1 \leq i \leq m_v  \}$ in the sense that the map  defined by $[f_{v, i}] \mapsto \alpha_{v, i}$ from $X$ to $Y$ induces an embedding of the anti-commutation graph of $X$ into $\bar{\mathcal{C}}(S_{g, p})$ as an induced subgraph. 
 Here, the anti-commutation graph $\mathcal{X}$ of given elements $g_1, \ldots, g_n$ in a group $G$ is the graph such that $V(\mathcal{X})= \{ g_1, \ldots, g_n\}$, and two vertices $g_i$ and $g_j$ span an edge if and only if $g_i$ and $g_j$ are not commutative in $G$. 
 By $\mathcal{Y}$ we denote the subgraph of $\bar{\mathcal{C}}(S_{g, p})$ induced by $Y$. 
 \item[$\bullet$] The natural homomorphism 
 $$\eta \colon G(\mathcal{Y}) \rightarrow \mathrm{Mod}(S_{g, p}); \ \alpha_{v, i} \mapsto [f_{v, i}]$$ 
 is an embedding. 
\end{enumerate}

Consider the set of the Dehn twists $\{ [T_{v, i}] \mid 1 \leq i \leq m_v, \  v \in  V(\Lambda) \}$, where $T_{v, i}$ is the Dehn twist along $\alpha_{v, i}$. 
Note that the anti-commutation graph $\mathcal{X}_{T}$ of $\{ [T_{v, i}] \mid 1 \leq i \leq m_v, \  v \in  V(\Lambda) \}$ coincides with $\mathcal{Y}$ by identifying $[T_{v, i}]$ with $\alpha_{v, i}$. 
Now Koberda's embedding Theorem asserts that there are non-zero numbers $s(v, i)$ such that the map defined by $[T_{v, i}] \mapsto [T_{v, i}]^{s(v, i)}$ induces an embedding of $G(\mathcal{X}_T)$ into $\mathrm{Mod}(S_{g, p})$.  
Then we have an isomorphism $\iota_0 \colon G(\mathcal{X}_T) \cong G(\mathcal{Y})$ by identifying $[T_{v, i}]^{s(v, i)}$ with $\alpha_{v, i}$. 
Note that the power endomorphism 
$$\phi \colon G(\Lambda) \hookrightarrow G(\Lambda)\colon v \mapsto v^{s(v)}$$ 
is an embedding by Lemma \ref{power-homomorphism_injective} (1). 
Here, $s(v):= s(v, 1) \cdots s(v, m_v)$. 
By setting 
$$e(v, i):= e'(v, m_v) s(v) s(v, i)$$ 
and 
$$\psi(v):= [T_{v, 1}]^{e(v, 1)} \cdots [T_{v, m_v}]^{e(v, m_v)},$$ 
we obtain a homomorphism $\psi \colon G(\Lambda) \rightarrow G(\mathcal{Y}) \leq \mathrm{Mod}(S_{g, p})$. 
The homomorphism $\psi$ is injective, because $\phi$, $\psi_0$, $\eta^{-1}$ and $\iota_0$ are injective, and $\psi= \iota_0^{-1} \circ \eta^{-1} \circ \psi_0 \circ \phi$. 
The identity $\psi= \iota_0^{-1} \circ \eta^{-1} \circ \psi_0 \circ \phi$ follows from 
\begin{align*}
 \phi(v) &= v^{s(v)} \\
 &\overset{\psi_0}{\mapsto} ([f_{v, 1}]^{e'(v, 1)} \cdots [f_{v, m_v}]^{e'(v, m_v)})^{s(v)} \\
 &= [f_{v, 1}]^{e'(v, 1)s(v)} \cdots [f_{v, m_v}]^{e'(v, m_v) s(v)} \\
 &\overset{\eta^{-1}}{\mapsto} \alpha_{v, 1}^{e'(v, 1)s(v)} \cdots \alpha_{v, m_v}^{e'(v, m_v) s(v)} \\
 &\overset{\iota_0^{-1}}{\mapsto}  [T_{v, 1}]^{e(v, 1)} \cdots [T_{v, m_v}]^{e(v, m_v)}. 
\end{align*}
\end{proof}

We say that a homomorphism $\psi \colon G(\Lambda) \rightarrow \mathrm{Mod}(S_{g, p})$ satisfies {\it condition (KK)} or {\it Kim--Koberda's condition} if $\psi(v)$ is a product of mutually commutative Dehn twists on $S_{g, p}$ for all $v \in V(\Gamma)$. 
In order to understand homomorphisms with (KK) condition graph theoretically, we introduce the following correspondence between graphs.

\begin{definition}
Let $\Lambda$ and $\Gamma$ be graphs. 
A {\it multi-valued homomorphism} $\phi \colon \Gamma \looparrowright \Lambda$ is a correspondence from $V(\Gamma)$ to $V(\Lambda)$ satisfying the following.  
\begin{enumerate}
 \item[(0)] The vertex-image $\phi(v)$ is a non-empty set of vertices for any $v \in V(\Gamma)$. 
 \item[(1)] If $v_1, v_2 \in V(\Gamma)$ are adjacent, then any pair of vertices $u_1$ and $u_2$, where $u_1 \in \phi(v_1)$ and $u_2 \in \phi(v_2)$, are adjacent. 
\end{enumerate}

A simple example of multi-valued homomorphisms is illustrated in Figure \ref{mv_hom}. 
\begin{figure}
\centering
\includegraphics[clip, scale=0.25]{./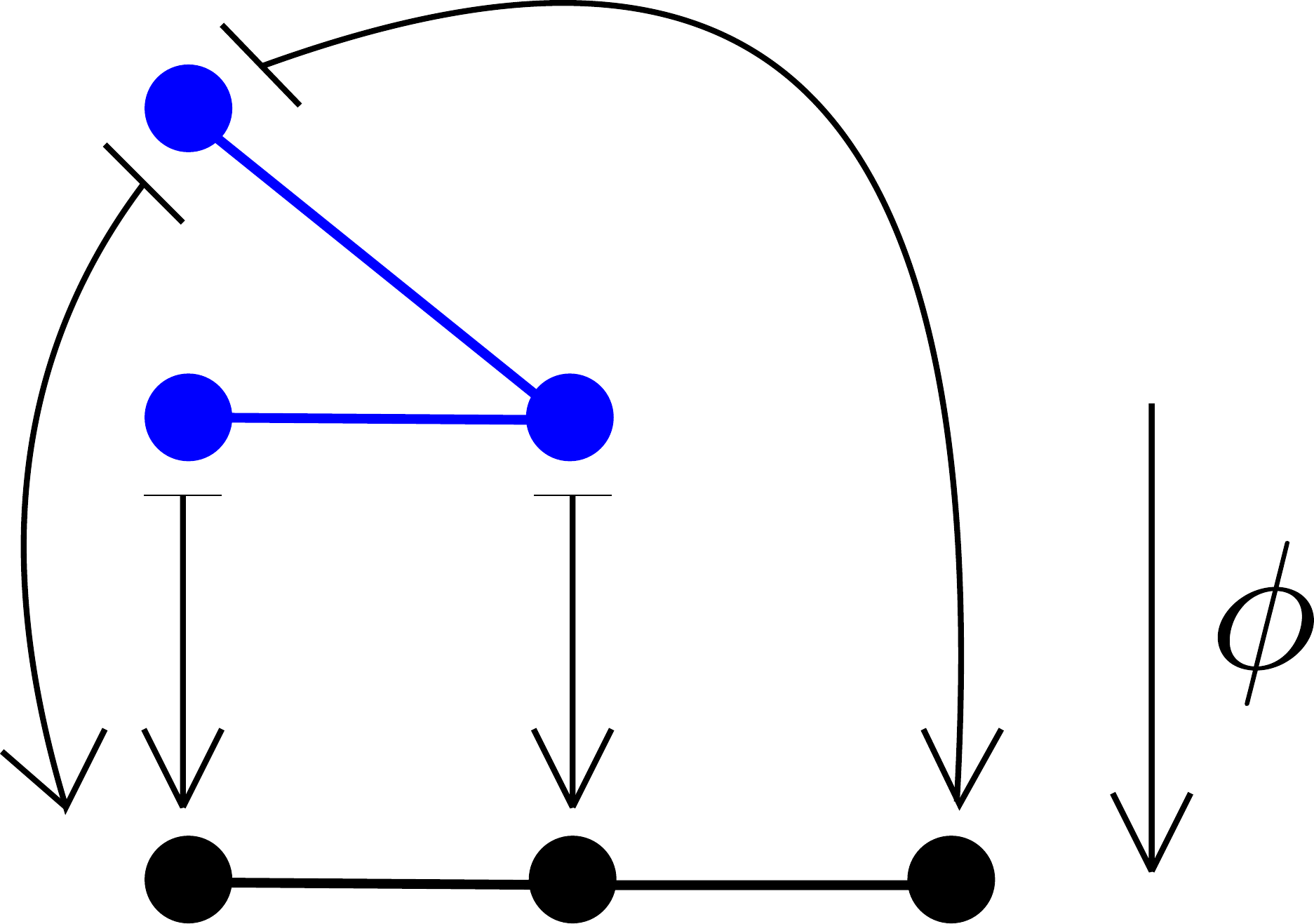}
\caption{A multi-valued homomorphism between two copies of $P_3$, which is not a map. In fact, the uppermost vertex of upper $P_3$ is mapped to the two endpoints of lower $P_3$. }
\label{mv_hom}
\end{figure}

Note that the axiom (1) generalizes the axiom of {\it graph homomorphism}. 
Here, a map $\phi \colon V(\Gamma) \rightarrow V(\Lambda)$ between the vertex sets of two graphs is called a graph homomorphism if $\phi$ maps any pair of adjacent vertices to a pair of adjacent vertices. 
A graph homomorphism satisfies the above axioms (0) and (1), and hence it is a multi-valued homomorphism. 
\end{definition}

Homomorphisms with condition (KK) can be translated into multi-valued homomorphisms.

\begin{proposition}
Suppose that a homomorphism $\psi \colon G(\Lambda) \rightarrow \mathrm{Mod}(S_{g, p})$ satisfies condition (KK). 
Namely, for each vertex $u \in V(\Lambda)$, there are mutually commutative Dehn twists $T_{u, 1}, \ldots, T_{u, m_v}$ and non-zero numbers $e(u, 1), \ldots, e(u, m_u)$ such that 
$$\psi(u) = [T_{u, 1}]^{e(u, 1)} \cdots [T_{u, m_u}]^{e(u, m_u)}. $$ 
Then $\psi$ induces a multi-valued homomorphism $\phi \colon \Gamma \looparrowright \Lambda$ by setting $[T_{u, i}] \mapsto u$. 
Here, $\Gamma$ is the subgraph of $\bar{\mathcal{C}}(S_{g, p})$ induced by the set of closed curves corresponding to the Dehn twists appearing in the presentation of some $\psi(u)$. 
\label{hom_induces_mv}
\end{proposition}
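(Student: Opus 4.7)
The plan is to verify directly the two axioms of a multi-valued homomorphism for the correspondence $\phi$ defined by $\alpha_{u,i} \mapsto u$, where $\alpha_{u,i}$ is the core curve of the Dehn twist $T_{u,i}$ appearing in the given decomposition of $\psi(u)$.

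Axiom (0) is immediate from the construction of $\Gamma$: its vertex set consists, by definition, of the isotopy classes of curves corresponding to Dehn twists appearing in the decomposition of some $\psi(u)$, so any $\alpha \in V(\Gamma)$ is of the form $\alpha_{u,i}$ and therefore $u \in \phi(\alpha)$, which makes $\phi(\alpha)$ non-empty.

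For axiom (1) I will argue by contraposition. Let $u_1, u_2 \in V(\Lambda)$ be non-adjacent in $\Lambda$, and pick $\alpha_k \in V(\Gamma)$ with $u_k \in \phi(\alpha_k)$ for $k=1,2$; the goal is to show that $\alpha_1$ and $\alpha_2$ do not span an edge of $\Gamma$. Non-adjacency of $u_1, u_2$ in $\Lambda$ yields $[u_1, u_2] = 1$ in $G(\Lambda)$, and hence $\psi(u_1)$ and $\psi(u_2)$ commute in $\mathrm{Mod}(S_{g,p})$. By condition (KK), each $\psi(u_k)$ is a multi-twist along the multicurve $A_k = \{\alpha_{u_k,1}, \ldots, \alpha_{u_k, m_{u_k}}\}$ (the curves in $A_k$ are pairwise disjoint since the Dehn twists $T_{u_k, i}$ commute pairwise), and $\alpha_k \in A_k$ by the choice of $u_k \in \phi(\alpha_k)$.

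The remaining and main obstacle is the following standard but non-trivial fact about commuting multi-twists: if $f = \prod_i T_{a_i}^{k_i}$ and $g = \prod_j T_{b_j}^{\ell_j}$ are multi-twists along multicurves $A = \{a_i\}$ and $B = \{b_j\}$ with all exponents non-zero and $fg = gf$, then every $a_i$ is either equal to or disjoint from every $b_j$. My plan is to prove this by conjugation: writing $gfg^{-1} = \prod_i T_{g(a_i)}^{k_i}$ and appealing to uniqueness of the pure multi-twist decomposition shows that $g$ must permute $A$ with matching exponents. If some $b_j$ were to intersect some $a_i$ essentially, then, after gathering the commuting factors of $g$ whose core curves actually meet $a_i$, the classical Dehn-twist intersection estimate $i(T_c^n(a), a) \geq |n|\, i(a, c)^2$ would force $g(a_i)$ to have positive intersection with $a_i$, contradicting $g(a_i) \in A$. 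Applying this fact with $f = \psi(u_1)$ and $g = \psi(u_2)$ yields either $\alpha_1 = \alpha_2$ or $i(\alpha_1, \alpha_2) = 0$, i.e., $\{\alpha_1, \alpha_2\} \notin E(\Gamma) \subseteq E(\bar{\mathcal{C}}(S_{g,p}))$, as required. The delicate point in the fact above is the case where a single $a_i$ crosses several of the $b_j$, where the one-twist formula must be upgraded; this can be handled either by a direct geometric argument on the concatenated Dehn twists along disjoint curves, or by invoking the theory of canonical reduction systems of pure mapping classes.
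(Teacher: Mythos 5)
Your overall strategy (contraposition: $u_1,u_2$ non-adjacent in $\Lambda$ forces $\psi(u_1),\psi(u_2)$ to commute, and commuting multi-twists must have pairwise equal-or-disjoint core curves) is a genuinely different route from the paper, which never touches intersection-number estimates: the paper raises $\psi(u)$ and $\psi(v)$ to a power $M$ so that they land in a right-angled Artin group generated by high powers of the relevant Dehn twists (Koberda's theorem, as in the proof of Theorem \ref{Kim-Koberda_obst}), and then uses Baudisch's theorem together with the RAAG centralizer lemma of \cite[Lemma 2.2]{Katayama17-2} to conclude that $\psi(u)^M$ and $\psi(v)^M$ generate a free group of rank $2$ whenever some twisting curve of $\psi(u)$ crosses some twisting curve of $\psi(v)$. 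That algebraic detour is exactly what lets the paper handle arbitrary non-zero exponents with no geometric estimate at all.

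The gap in your write-up is precisely the step you flag as ``delicate'', and it is not cosmetic: the statement that a multi-twist $g=\prod_j T_{b_j}^{\ell_j}$ with arbitrary non-zero exponents moves any curve crossing its support off itself is essentially the whole content of the proposition, and the one-twist bound $i(T_c^n(a),a)\geq |n|\,i(a,c)^2$ does not upgrade to it for free. The textbook multi-twist inequality (Farb--Margalit, Prop.\ 3.4) is proved under the hypothesis that all exponents have the same sign, whereas your exponents $e(u,i)$ may be mixed-sign and of absolute value $1$, so ``gathering the factors of $g$ whose cores meet $a_i$'' and invoking the one-twist estimate is not justified as written; likewise ``uniqueness of the pure multi-twist decomposition'' is itself a canonical-reduction-system fact that needs a citation (Birman--Lubotzky--McCarthy or Ivanov), not an appeal to the free abelian group generated by the twists. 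The argument can be closed along the lines you hint at, but it must actually be carried out, e.g.: since $\mathrm{CRS}(f)=A$ and $\mathrm{CRS}(g)=B$ for multi-twists with non-zero exponents, commutativity gives $f(B)=B$, hence some power $f^k$ fixes every $b\in B$, hence each $T_b$ preserves $\mathrm{CRS}(f^k)=A$; then a power $T_b^r$ fixes every $a_i$, and if $i(a_i,b)>0$ this contradicts the one-twist inequality $i(T_b^r(a_i),a_i)\geq |r|\,i(a_i,b)^2>0$. Alternatively, you can simply follow the paper's route and avoid intersection estimates altogether. Until one of these is done, the central lemma of your proof is asserted rather than proved.
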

\begin{proof}
We prove that $\phi$ satisfies the axiom (1). 
Pick adjacent vertices $x$ and $y$ in $\Gamma$ and let $[T_{u, i}]$ and $[T_{v, j}]$ be corresponding Dehn twists, where $u$ and $v$ are vertices of $\Lambda$ such that $\phi(x) = u$ and $\phi(y) = v$. 

\begin{claim}
There is an integer $M$ such that the mapping classes $\psi(u)^M$ and $\psi(v)^M$ generate free group of rank $2$ in $\mathrm{Mod}(S_{g, p})$.  
\end{claim}
\begin{proof}
Since $x$ intersects with $y$ non-trivially and minimally as closed curves, the Dehn twists $[T_{u, i}]$ and $[T_{v, j}]$ are non-commutative in $\mathrm{Mod}(S_{g, p})$. 
Hence, by the assumption that $\psi$ satisfies condition (KK), the Dehn twist $[T_{v, j}]$ do not appear in the presentation of $\psi(u)$; 
$$\psi(u) = [T_{u, 1}]^{e(u, 1)} \cdots [T_{u, m_{u}}]^{e(u, m_u)} .$$
As in the proof of Theorem \ref{Kim-Koberda_obst}, we have a right-angled Artin group $G(\mathcal{X}_{T}) \leq \mathrm{Mod}(S_{g, p})$ which is generated by high powers of the Dehn twists appearing in the presentations of the vertex images $\{ \psi(w) \mid w \in V(\Lambda) \}$, where $\mathcal{X}_{T}$ is the anti-commutation graph of those Dehn twists. 
Pick an integer $M$ so that $\psi(u)^M$ and $\psi(v)^M$ are contained in the right-angled Artin group $G(\mathcal{X}_{T})$. 
Then, by Baudisch's theorem \cite[Theorem 1.2]{Baudisch}, $\langle \psi(u)^M, \psi(v)^M \rangle$ is isomorphic to an abelian group or free group of rank $2$. 
Since the Dehn twist $[T_{v, j}]$ do not appear in the presentation of $\psi(u)$, and since $[T_{v, j}]^{Me(v, j)}$ is not commutative with $\psi(u)^M$, it follows that $ \psi(u)^M$ and $\psi(v)^M$ are not commutative in $G(\mathcal{X}_{T})$ (see \cite[Lemma 2.2]{Katayama17-2}). 
Therefore $\langle \psi(u)^M, \psi(v)^M \rangle$ must be a free group of rank $2$. 
\end{proof}

This claim together with the assumption that $\psi$ is a group homomorphism implies that the vertices $u$ and $v$ must be non-commutative in $G(\Lambda)$. 
Namely, $u$ and $v$ are adjacent in $\Lambda$. 
Thus, $\phi$ satisfies the axiom (1). 
\end{proof}

In addition, multi-valued homomorphisms induces homomorphisms with condition (KK).

\begin{proposition}
Let $\Gamma$ be a finite induced subgraph of $\bar{\mathcal{C}}(S_{g, p})$ and $\phi \colon \Gamma \looparrowright  \Lambda$ a multi-valued homomorphism. 
Then, we have a homomorphism $\psi \colon G(\Lambda) \rightarrow \mathrm{Mod}(S_{g, p})$ with condition (KK) by setting 
$$u \mapsto \prod_{x \in \phi^{-1}(u)} [T_x]^{e(u, x)},$$ 
where $T_x$ is the Dehn twist along a closed curve $x$ and $e(u, x)$ is a non-zero integer. 
\label{mv_induces_hom}
\end{proposition}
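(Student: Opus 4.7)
The plan is to verify two claims that together establish the proposition: (a) the product defining $\psi(u)$ is well-defined and automatically satisfies condition (KK); and (b) $\psi$ respects the defining commutation relations of $G(\Lambda)$, i.e. $\psi(u)$ and $\psi(v)$ commute whenever $\{u,v\}\notin E(\Lambda)$. Both reduce to the simple dictionary between the axiom of a multi-valued homomorphism and the basic fact that disjoint curves have commuting Dehn twists.

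For (a), I will show that any two distinct vertices $x_1,x_2\in\phi^{-1}(u)$ are non-adjacent in $\Gamma$. If instead $x_1,x_2$ were adjacent in $\Gamma$, then axiom (1) of a multi-valued homomorphism, applied with $u\in\phi(x_1)\cap\phi(x_2)$, would force $u$ to be adjacent to itself in $\Lambda$, which is impossible since $\Lambda$ has no loops. Because $\Gamma$ is an induced subgraph of $\bar{\mathcal{C}}(S_{g,p})$, non-adjacency in $\Gamma$ means the corresponding simple closed curves can be realized disjointly, so their Dehn twists commute in $\mathrm{Mod}(S_{g,p})$. Hence all factors $[T_x]^{e(u,x)}$ with $x\in\phi^{-1}(u)$ commute pairwise, making the product $\psi(u)$ independent of ordering and exhibiting condition (KK) on the nose (with the convention that an empty product equals the identity if $\phi^{-1}(u)=\emptyset$).

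For (b), fix $u,v\in V(\Lambda)$ with $\{u,v\}\notin E(\Lambda)$ and let $x\in\phi^{-1}(u)$, $y\in\phi^{-1}(v)$. I will show $[T_x]$ and $[T_y]$ commute. If $x=y$ this is trivial; if $x\neq y$ and $x,y$ were adjacent in $\Gamma$, axiom (1) applied to $x,y$ and the targets $u\in\phi(x)$, $v\in\phi(y)$ would force $\{u,v\}\in E(\Lambda)$, a contradiction. Thus $x,y$ are non-adjacent in $\Gamma$, their curves are disjoint, and $[T_x]$ commutes with $[T_y]$. Combining with (a), every Dehn twist in the expansion of $\psi(u)$ commutes with every Dehn twist in the expansion of $\psi(v)$, so $\psi(u)\psi(v)=\psi(v)\psi(u)$. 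The defining relations of $G(\Lambda)$ are therefore satisfied and $\psi$ extends to a well-defined group homomorphism from $G(\Lambda)$ to $\mathrm{Mod}(S_{g,p})$ with condition (KK), completing the proof.

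There is no substantive obstacle: the argument is essentially a direct translation of the axiom of multi-valued homomorphism into the language of commuting Dehn twists, and serves as a clean converse to Proposition \ref{hom_induces_mv}. The only point requiring any care is the self-loop observation used in (a), which is what forces $\phi^{-1}(u)$ to consist of pairwise disjoint curves and thereby makes condition (KK) come for free.
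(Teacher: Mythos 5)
Your proof is correct and follows essentially the same route as the paper: you verify the defining commutation relations of $G(\Lambda)$ via the contrapositive of axiom (1), translating non-adjacency in $\Gamma\leq\bar{\mathcal{C}}(S_{g,p})$ into commuting Dehn twists. Your part (a), showing the factors within each $\psi(u)$ pairwise commute because $\Lambda$ has no loops, is just a more explicit spelling-out of the paper's remark that $\phi$ does not project an edge onto a vertex, so condition (KK) holds.
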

\begin{proof}
We check that the above $\psi$ is indeed a group homomorphism. 
Pick vertices $u_1$ and $u_2$ with $\{ v_1, v_2 \} \not\in E(\Lambda)$. 
Then $u$ and $v$ are commutative in $G(\Lambda)$. 
By the axiom (1), for any vertex $x$ of $\phi^{-1}(u)$ and any vertex $y$ of $\phi^{-1}(v)$, $x$ and $y$ do not span an edge in $\Gamma$, and so closed curves representing $x$ and $y$ have mutually disjoint representations. 
Hence, the vertices $x$ and $y$ do not span an edge in $\bar{\mathcal{C}}(S_{g, p})$, and $[T_x]$ and $[T_y]$ are commutative. 
It follows that the products $\prod_{x \in \phi^{-1}(u)} [T_x]^{e(u, x)}$ and $\prod_{y \in \phi^{-1}(v)} [T_y]^{e(v, y)}$ are commutative in $\mathrm{Mod}(S_{g, p})$. 
Thus, $\psi$ is a homomorphism.  
In addition, since $\phi$ does not project an edge onto a vertex, $\psi$ satisfies condition (KK). 
\end{proof}

This proposition also shows that, for any multi-valued homomorphism, $\phi \colon \Gamma \looparrowright \Lambda$, between two finite graphs, the map $\psi \colon G(\Lambda) \rightarrow G(\Gamma)$ defined by 
$$u \mapsto \prod_{x \in \phi^{-1}(u)} x, $$
is a homomorphism between corresponding right-angled Artin groups.

\begin{corollary}
Let $p \colon \Gamma \rightarrow \Lambda$ be a  covering map between finite graphs. 
Then there is an embedding $G(\Lambda) \hookrightarrow G(\Gamma)$. 
\end{corollary}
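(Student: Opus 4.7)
The plan is to apply the general construction that produces a homomorphism from any multi-valued homomorphism of graphs, and then leverage the covering property of $p$ (not just the weaker property of being a graph homomorphism) to obtain injectivity. First, the covering $p\colon\Gamma\to\Lambda$ is a graph homomorphism and hence, by setting $\phi(v)=\{p(v)\}$ for each $v\in V(\Gamma)$, it is a (single-valued) multi-valued homomorphism in the sense of the paper. The remark immediately preceding the corollary then furnishes a homomorphism
\[
\psi\colon G(\Lambda)\to G(\Gamma),\qquad u\mapsto\prod_{x\in p^{-1}(u)}x.
\]
This product is well-defined (order-independent) in $G(\Gamma)$ because any two distinct vertices in a single fiber $p^{-1}(u)$ are non-adjacent in $\Gamma$: if they were joined by an edge, that edge would project to a loop at $u$ in $\Lambda$, which is forbidden, so all factors pairwise commute.

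The remaining step, which is the essential content of the corollary, is to verify that $\psi$ is injective. I plan to carry this out by a path-lifting argument. Given any nontrivial $w\in G(\Lambda)$ written in reduced normal form $w=u_{i_1}^{\varepsilon_1}\cdots u_{i_k}^{\varepsilon_k}$, I would fix a base vertex in $\Gamma$ and use the unique lifting of stars (the defining property of a covering) to lift the letters of $w$ one at a time, producing a word $\widetilde{w}=x_1^{\varepsilon_1}\cdots x_k^{\varepsilon_k}$ in the generators of $G(\Gamma)$ with $p(x_j)=u_{i_j}$. Because $p$ is a graph homomorphism, every non-edge in $\Lambda$ pulls back to a non-edge in $\Gamma$, so each commutation move available for reducing $w$ in $G(\Lambda)$ is matched by the analogous commutation move available for $\widetilde{w}$ in $G(\Gamma)$. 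Consequently, if $w$ is reduced, then so is $\widetilde{w}$, and hence $\widetilde{w}\neq 1$ in $G(\Gamma)$. Using the intra-fiber commutativity of the generators appearing in the factorization of each $\psi(u_{i_j})$, one identifies $\widetilde{w}$ as a ``slice" of $\psi(w)$ and thereby concludes $\psi(w)\neq 1$.

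The main obstacle will be to make the last identification precise: the element $\psi(w)$ is not literally equal to $\widetilde{w}$ but is built from products of entire fibers, so one must rule out that the additional factors coming from the fibers conspire with the commutation relations in $G(\Gamma)$ to annihilate $\widetilde{w}$. The key input is that the covering hypothesis yields a local bijection on stars, which forces any cancellation in $G(\Gamma)$ among the specific letters of $\widetilde{w}$ to correspond, fiber by fiber, to a cancellation available in $G(\Lambda)$; the ``extra" generators lying in fibers but outside the chosen lifted path commute freely with the letters of $\widetilde{w}$ and cannot induce further reductions. This is precisely the spirit of the path-lifting obstruction developed in Lemma~\ref{path-lifting_lemma}, and once this translation is in place, the injectivity of $\psi$ follows at once.
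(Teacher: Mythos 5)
Your construction of $\psi$ is fine and coincides with the paper's: the covering $p$ is a multi-valued homomorphism $\Gamma \looparrowright \Lambda$, the remark after Proposition \ref{mv_induces_hom} gives the homomorphism $u \mapsto \prod_{x \in p^{-1}(u)} x$, and the intra-fiber commutativity argument is correct. The gap is the injectivity, which is the whole content of the statement, and your sketch does not close it. Concretely: (i) the letter-by-letter lift $\widetilde{w}$ is not well defined, since consecutive letters of a reduced word need not be adjacent in $\Lambda$, so ``unique lifting of stars'' gives no canonical choice, and the choice matters. For the bipartite double cover $C_6 \rightarrow C_3$ (triangle vertices $a, b, c$; hexagon vertices $a_1, b_1, c_1, a_2, b_2, c_2$ in cyclic order, $a_i \mapsto a$, etc.), the word $aca^{-1}c^{-1}$ is nontrivial in $G(C_3) = F_3$, yet the lift $a_1 c_1 a_1^{-1} c_1^{-1}$ is trivial in $G(C_6)$ because $a_1$ and $c_1$ are non-adjacent there. (ii) Your justification of ``$w$ reduced $\Rightarrow \widetilde{w}$ reduced'' points the wrong way: pulling back non-edges of $\Lambda$ lets you lift commutation moves of $w$, but what you need is that commutations among the lifted letters (non-edges of $\Gamma$) push forward to commutations in $\Lambda$, and a covering does not guarantee this -- again $a_1, c_1$ are non-adjacent while $a, c$ are adjacent. (iii) The final identification of $\widetilde{w}$ as a ``slice'' of $\psi(w)$ is exactly the hard point, and you explicitly defer it; moreover the supporting claim that the unused fiber generators commute with all letters of $\widetilde{w}$ is false in general (in the same example $a_2$ is adjacent to $c_1$). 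The analogy with Proposition \ref{path-lifting_lemma} does not help here, since that statement lifts induced copies of $P_n$, not words or relations.

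The paper's proof is quite different and avoids normal forms altogether: it post-composes $\psi$ with the projection $\pi_p \colon G(\Gamma) \rightarrow G(\Lambda)$, $x \mapsto p(x)$, so that $\pi_p \circ \psi$ is the power endomorphism $u \mapsto u^{\# p^{-1}(u)}$, which is injective by Lemma \ref{power-homomorphism_injective} (1); injectivity of $\psi$ follows at once. Be aware, though, that the well-definedness of $\pi_p$ is exactly where the covering hypothesis enters: one needs that non-adjacent vertices of $\Gamma$ have non-adjacent (or equal) images, i.e.\ that $p$ reflects edges. This is the same phenomenon that blocks your projection of reductions, and for the purely local notion of covering (bijectivity on stars) it can fail, as the $C_6 \rightarrow C_3$ example shows. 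So whichever route you take, you must isolate precisely which property of ``covering map'' yields this edge-reflection and verify it at that step; without it neither your argument nor the short composition argument goes through.
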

\begin{proof}
We have a homomorphism $F \colon G(\Lambda) \hookrightarrow G(\Lambda) \colon u \mapsto u^{\#p^{-1}(u)}$ which is the composition of the following two maps. 
The first one is a homomorphism 
$$\psi \colon G(\Lambda) \rightarrow G(\Gamma) \colon u \mapsto \prod_{x \in p^{-1}(u)} x$$ 
induced by the covering map $p$ (Proposition \ref{mv_induces_hom}). 
The second one is a homomorphism $\pi_p \colon G(\Gamma) \rightarrow G(\Lambda) \colon x \mapsto p(x)$. 
We will prove that $F = \pi_p \circ \psi$ is an embedding; then so is $\psi$. 
We first check that $\pi_p$ is indeed a (surjective) homomorphism. 
Let $x$ and $y$ be non-adjacent vertices of $\Gamma$. 
Since $p$ is a covering map, $\{ p(x), p(y) \} \in E(\Lambda)$ implies $\{ x, y \} \in E(\Gamma)$. 
Hence, $p(x)$ and $p(y)$ must be non-adjacent. 
Thus, $\pi_p$ is a homomorphism, and so $F$ is also a homomorphism. 
In addition, the surjectivity of the covering map $p$ implies that $\#p^{-1}(u) > 0$ for all $u \in V(\Lambda)$. 
By Lemma \ref{power-homomorphism_injective} (1), it follows that $F$ is an embedding, and therefore $\psi$ is an embedding. 
\end{proof}

\begin{lemma}{(\cite[Theorem 5.4]{Kim-Koberda13})}
Let $\Lambda$ be a finite graph with $P_2 \sqcup P_2 \not\leq \Lambda$ and with the centerless right-angled Artin group $G(\Lambda)$, and let $\Gamma = \Gamma_1 \sqcup \Gamma_2$ be the disjoint union of two finite graphs. 
Suppose that $\psi \colon G(\Lambda) \hookrightarrow G(\Gamma)$ is an embedding. 
Then either at least one homomorphism $\pi_1 \circ \psi \colon G(\Lambda) \rightarrow G(\Gamma_1)$ or $\pi_2 \circ \psi \colon G(\Lambda) \rightarrow G(\Gamma_2)$ is an embedding. 
Here, $\pi_i$ is a natural projection  $G(\Gamma)=G(\Gamma_1) \times G(\Gamma_2) \rightarrow G(\Gamma_i)$ onto the direct factor ($i=1, 2$). 
\label{Kim-Koberda_restrict_direct_factor}
\end{lemma}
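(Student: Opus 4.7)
The plan is to argue by contradiction. Suppose that neither $\pi_1 \circ \psi$ nor $\pi_2 \circ \psi$ is injective, and set $K_i := \ker(\pi_i \circ \psi) \trianglelefteq G(\Lambda)$ for $i=1,2$. Since $\psi$ is an embedding and $G(\Gamma) = G(\Gamma_1) \times G(\Gamma_2)$, one checks at once that $K_1 \cap K_2 = \ker\psi = \{1\}$, while $\psi(K_1)$ and $\psi(K_2)$ lie in the two commuting direct factors of $G(\Gamma)$, so $[K_1, K_2] = 1$ in $G(\Lambda)$. The goal is to extract from these two commuting normal subgroups an internal copy of $G(P_2 \sqcup P_2) = F_2 \times F_2$, which will contradict the hypothesis $P_2 \sqcup P_2 \not\leq \Lambda$.

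The heart of the argument is to produce a nonabelian free subgroup inside each $K_i$. Here I would invoke Baudisch's theorem (already used in the proof of Proposition~\ref{hom_induces_mv}): any two-generator subgroup of $G(\Lambda)$ is either abelian or free of rank two. If some $K_i$ contained no $F_2$, then every pair of its elements would commute, so $K_i$ itself would be abelian, giving a nontrivial normal abelian subgroup of $G(\Lambda)$. Such a subgroup is excluded by the centerless hypothesis: using that the centralizer of a cyclically reduced element of $G(\Lambda)$ visibly splits along the star of its support, one checks that a nontrivial normal abelian subgroup would force a universal vertex in $\Lambda$, contradicting $Z(G(\Lambda))=\{1\}$.

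Once $F_2$-subgroups $\langle a_i, b_i \rangle \leq K_i$ have been chosen, the relations $K_1 \cap K_2 = \{1\}$ and $[K_1, K_2] = 1$ imply that $\langle a_1, b_1, a_2, b_2 \rangle$ forms an internal direct product isomorphic to $F_2 \times F_2 = G(P_2 \sqcup P_2)$ inside $G(\Lambda)$. The concluding step is to deduce $P_2 \sqcup P_2 \leq \Lambda$ from this embedding, which is the known special case of the Kim--Koberda extension-graph programme for the graph $P_2 \sqcup P_2$ (provable by a direct parabolic-projection argument on the Salvetti complex). Combined with the hypothesis $P_2 \sqcup P_2 \not\leq \Lambda$, this gives the desired contradiction.

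The main obstacle I anticipate is the middle step: ruling out nontrivial normal abelian subgroups in a centerless RAAG is a genuine structural statement requiring a careful analysis of centralizers (à la Servatius), and is substantially harder than the surrounding combinatorial bookkeeping. The final reduction from $F_2 \times F_2 \leq G(\Lambda)$ to $P_2 \sqcup P_2 \leq \Lambda$ is also nontrivial in general, but for this specific graph it is tractable because $P_2 \sqcup P_2$ already appears in the extension graph $\Lambda^{e}$ only when it appears in $\Lambda$ itself.
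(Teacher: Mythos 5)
The paper does not actually prove this lemma --- it is quoted verbatim with the citation \cite[Theorem 5.4]{Kim-Koberda13} --- so your argument is an independent route rather than a reconstruction, and its skeleton is sound: with $K_i=\ker(\pi_i\circ\psi)$ you correctly get $K_1\cap K_2=\ker\psi=\{1\}$ and $[K_1,K_2]=1$ (since $\psi(K_1)\leq G(\Gamma_2)$ and $\psi(K_2)\leq G(\Gamma_1)$), Baudisch \cite{Baudisch} does show that a subgroup of a RAAG with no $F_2$ is abelian, and the internal product of an $F_2$ in $K_1$ with an $F_2$ in $K_2$ is honestly $F_2\times F_2=G(P_2\sqcup P_2)$ because the intersection is trivial. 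What you should be clear about is that the two remaining steps are not routine bookkeeping but theorems in their own right, and as written they are only sketched. First, ``a centerless RAAG has no nontrivial normal abelian subgroup'' is true (it is the statement that the amenable radical of a RAAG is its center, provable via the join/irreducible decomposition together with the fact that a noncyclic directly irreducible RAAG admits rank-one elements, or by a careful Servatius-centralizer argument), but your one-line version also slips on the convention of this paper: here commutation corresponds to \emph{non}-adjacency, so a nontrivial center forces an \emph{isolated} vertex of $\Lambda$ (as in \cite{Behrstock-Charney12}, used in Section \ref{simplest_section}), not a universal one. Second, the implication $F_2\times F_2\leq G(\Lambda)\Rightarrow P_2\sqcup P_2\leq\Lambda$ is exactly Kambites' theorem (equivalently, the induced-square criterion, also recoverable from the Kim--Koberda extension-graph results you allude to); it should be cited, not waved through as ``a direct parabolic-projection argument.'' With those two facts properly quoted, your proof closes and is attractively modular: centerlessness is used only to exclude abelian kernels, and the $P_2\sqcup P_2$ hypothesis only to exclude a pair of nonabelian ones, whereas the cited proof of \cite[Theorem 5.4]{Kim-Koberda13} proceeds through their normal-form machinery. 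Without them, the middle and final steps are genuine gaps in the write-up, though not flaws in the strategy.
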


An injective graph homomorphism $\iota \colon \Lambda \rightarrow \Gamma$ is said to be a {\it full embedding} if $\iota(\Lambda)$ is an induced subgraph of $\Gamma$.

\begin{proposition}
Let $\phi \colon \Gamma \looparrowright \Lambda$ be the multi-valued homomorphism induced by an embedding $\psi \colon G(\Lambda) \hookrightarrow \mathrm{Mod}(S_{g, p})$ with condition (KK), where $\Gamma$ is a finite induced subgraph of $\bar{\mathcal{C}}(S_{g, p})$. 
Then, for any full embedding $\iota \colon P_n \rightarrow \Lambda$, there is a full embedding $\tilde{\iota} \colon P_n \rightarrow \Gamma$ such that $\iota = \phi \circ \tilde{\iota}$. 
In particular, we have $P_n \leq \bar{\mathcal{C}}(S_{g, p})$. 
\label{path-lifting_lemma}
\end{proposition}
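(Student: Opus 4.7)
The plan is to select a coherent lift $\tilde{\iota}(v_i) = x_i \in X_i := \phi^{-1}(\iota(v_i))$ for each vertex $v_i$ of $P_n$. Writing $u_i := \iota(v_i)$, the $(\mathrm{KK})$ expression of $\psi(u_i)$ is a product of Dehn twists $[T_x]$ for $x \in X_i$, mutually commuting, so the curves in $X_i$ are pairwise disjoint and hence pairwise non-adjacent in $\Gamma$. Two key bookkeeping facts follow. First, for $|i - j| \geq 2$, the non-adjacency of $u_i, u_j$ in $\Lambda$ together with the contrapositive of axiom $(1)$ forces every pair in $X_i \times X_j$ to be non-adjacent in $\Gamma$; in particular any selection $x_i \in X_i$ yields the non-consecutive non-adjacency needed for $\tilde{\iota}$ to have induced image. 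Second, for consecutive $i$, the relation $u_i u_{i+1} \neq u_{i+1} u_i$ in $G(\Lambda)$, pushed through the injection $\psi$, forces some pair $(a, b) \in X_i \times X_{i+1}$ with $[T_a], [T_b]$ non-commuting, hence $a, b$ adjacent in $\bar{\mathcal{C}}(S_{g, p})$ and thus in $\Gamma$.

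Given these facts, it remains to produce a selection with $(x_i, x_{i+1})$ adjacent in $\Gamma$ for all $i$ and with the $x_i$ distinct. I would argue by induction on $n$, the cases $n \leq 2$ being direct from the two facts above. For the inductive step, a naive extension of a lift of $(u_1, \ldots, u_{n-1})$ can stall, because the chosen $x_{n-1}$ may have no neighbor in $X_n$. To circumvent this, I would strengthen the inductive hypothesis to allow prescribing the endpoint $x_{n-1}$ to lie in a specified non-empty subset $Y \subseteq X_{n-1}$. Applying the hypothesis to
$$Y := \{ y \in X_{n-1} \mid y \text{ is adjacent in } \Gamma \text{ to some } z \in X_n \},$$
which is non-empty by the second bookkeeping fact, then produces the required $x_{n-1}$, to which an adjacent $x_n \in X_n$ is appended.

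The crux, and the main obstacle I anticipate, is justifying the strengthened induction: one must propagate the subset constraint backward through the chain and verify that the propagated subset $Y^{(i)} \subseteq X_i$ remains non-empty at every stage. The central tool is the injectivity of $\psi$ restricted to $\langle u_1, \ldots, u_n \rangle \cong G(P_n)$: should $Y^{(i)}$ ever collapse to the empty set, the resulting combinatorial ``cut'' in the layered subgraph of $\Gamma$ on $X_1 \cup \cdots \cup X_n$ allows one to write down a nontrivial iterated commutator in $G(P_n)$ --- typically of the form $[u_1, [u_2, [\cdots , [u_{n-1}, u_n]\cdots ]]]$ or a variant tailored to the cut --- whose image under $\psi$ vanishes in $\mathrm{Mod}(S_{g, p})$, contradicting that $\psi$ is an embedding. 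Finally, injectivity of $\tilde{\iota}$ follows from the bookkeeping: consecutive coordinates differ since $\Gamma$ has no loops, and a coincidence $x_i = x_j$ for $|i - j| \geq 2$ would force $\psi$ restricted to $\langle u_i, u_j \rangle \cong \mathbb{Z}^2$ to factor through a cyclic subgroup (generated by the common Dehn twist), again contradicting injectivity; such a coincidence can be removed by reshuffling within $X_i$ or $X_j$, using that neither can be a common singleton in the coincidence scenario. The concluding clause $P_n \leq \bar{\mathcal{C}}(S_{g, p})$ is then immediate, since $\Gamma$ is induced in $\bar{\mathcal{C}}(S_{g, p})$.
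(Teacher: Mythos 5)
Your reduction of the problem to a selection of representatives $x_i\in X_i:=\phi^{-1}(\iota(v_i))$ with consecutive choices adjacent is reasonable, and the half of your plan you flag as the crux is in fact completable: if the propagated set $Y^{(i)}$ is empty, the iterated commutator $[u_i,[u_{i+1},[\cdots[u_{n-1},u_n]\cdots]]]$ really is killed by $\psi$ (the factor of $\psi(u_j)$ supported on $X_j\setminus Y^{(j)}$ commutes with the inner commutator, so the bracket telescopes onto the $Y^{(j)}$-parts and finally onto the empty product), and it is nontrivial in $G(P_n)$ by a standard retraction/centralizer argument; so an empty cut does contradict injectivity. The genuine gap is elsewhere, in the distinctness step, which you dispatch too quickly and with a false claim. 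A coincidence $x_i=x_j$ with $|i-j|\geq 2$ does \emph{not} force $\psi|_{\langle u_i,u_j\rangle}$ to factor through a cyclic subgroup: $\psi(u_i)$ and $\psi(u_j)$ are products over all of $X_i$ and $X_j$, not over the chosen representatives, so sharing one curve implies nothing of the sort. Moreover the coincidence cannot always be removed by reshuffling. Consider $X_1=\{x\}$, $X_2=\{b\}$, $X_3=\{x,c\}$, where the only edge of $\Gamma$ is $\{x,b\}$: all your sets $Y^{(i)}$ are nonempty, the greedy selection is forced to be $x,b,x$, which is degenerate, and there is nothing to reshuffle (here $X_1$ \emph{is} a common singleton). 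Such a configuration is indeed incompatible with injectivity of $\psi$, but the witnessing kernel element has a different shape (in this example $\psi([u_1,u_2])=\psi([u_3,u_2])$, so $[u_1,u_2][u_3,u_2]^{-1}\in\ker\psi$); it is not an iterated commutator attached to an empty cut, so the mechanism you describe never detects it. Handling $X_i\cap X_{i+2}\neq\emptyset$ therefore requires a genuinely separate use of injectivity intertwined with the choice of representatives, and this is missing from your proposal.

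For comparison, the paper takes a different route that absorbs these difficulties: it first upgrades $\psi$, via Koberda's theorem, Lemma \ref{power-homomorphism_injective} and high powers, to an embedding $G(\Lambda)\hookrightarrow G(\Gamma)$ satisfying condition (KK); it proves the cases $n\leq 2$ directly (your two bookkeeping facts); for $n=3$ it uses Lemma \ref{Kim-Koberda_restrict_direct_factor} to restrict to a connected component $\Gamma'$ and extracts an induced $P_3$ from an edge path joining $\mathrm{supp}(\psi(u_1))$ to $\mathrm{supp}(\psi(u_3))$, using that these supports span no edges; and for $n>3$ it invokes \cite[Lemma 4.2]{Katayama17-2}, where exactly the degenerate configurations above are dealt with. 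If you want a self-contained proof along your lines, you must add an argument producing a contradiction with injectivity whenever every admissible selection is degenerate, not only when some $Y^{(i)}$ is empty.
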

\begin{proof}
Case $n=1$. 
We have only to show that $\psi(u) \neq 1$ for all $u \in V(\Lambda)$, because 
\begin{align*}
\psi(u) \neq 1 & \Leftrightarrow \phi^{-1}(u) \mbox{ is not empty} \\
 & \Leftrightarrow \exists v \in V(\Gamma); \ \phi(v)=u. 
\end{align*} 
However, since $\psi$ is injective, $\psi(u) \neq 1$ for all $u \in V(\Lambda)$. 

Case $n=2$. 
Let $w_1$ and $w_2$ be the endpoints of $P_2$. 
Set $u_i:= \iota(w_i)$ ($i=1, 2$). 
Then the commutator $u_1 u_2 u_1^{-1} u_2^{-1}$ is not the identity element in $G(\Lambda)$. 
Since $\psi$ is injective, $\psi(u_1)$ an $\psi(u_2)$ are not commutative. 
Hence, there are Dehn twists $[T_{i}]$ ($i=1, 2$) such that $[T_{i}]$ appears in the representation of $\psi(u_i)$, and that $[T_1]$ and $[T_2]$ are not commutative. 
We now pick the vertices $v_1$ and $v_2$ of $\Gamma$ corresponding to $[T_1]$ and $[T_2]$, respectively. 
Then $v_1$ and $v_2$ span an edge in $\Gamma$, and that $\phi(v_i) = u_i$ ($i=1, 2$). 
Thus, by setting $w_i \mapsto v_i$, we have a full embedding $\tilde{\iota} \colon P_2 \rightarrow \Gamma$ such that $\iota = \tilde{\iota} \circ \phi$. 

Case $n=3$. 
Using Koberda's embedding theorem, we have $G(\Gamma) \leq \mathrm{Mod}(S_{g, p})$. 
Moreover, the composition of sufficiently large power endomorphism of $G(\Lambda)$ and $\psi$ together with Lemma \ref{power-homomorphism_injective} allows us to have an embedding $\psi \colon G(\Lambda) \hookrightarrow G(\Gamma)$ with condition (KK). 
We label the vertices of $\iota(P_3)$ so that $u_1$ and $u_2$ are adjacent, and that $u_2$ and $u_3$ are adjacent. 
Let $\mathrm{supp}(\psi(u_i))$ ($i=1,2,3$) denotes the subset of $V(\Gamma)$ representing a shortest word of $\psi(u_i)$ with respect to the group presentation $G(\Gamma)$. 
Restricting $\psi$ to the subgroup $G(\iota(P_3))$,  we have an embedding $\psi \colon (G(\iota(P_3))) \rightarrow G(\Gamma)$ satisfying condition (KK). 
Moreover, since $G(\iota(P_3)) \cong \mathbb{Z}*\mathbb{Z}^2$ is centerless and since $P_3$ does not contain $P_2 \sqcup P_2$ as a subgraph, by applying Lemma \ref{Kim-Koberda_restrict_direct_factor} repeatedly, we have a connected component $\Gamma'$ of $\Gamma$ with an embedding $\psi \colon (G(\iota(P_3))) \hookrightarrow G(\Gamma')$ with condition (KK). 
By $\mathrm{supp}(\psi(u_i))$ ($i=1,2,3$), we denote the subset of $V(\Gamma')$ representing a shortest word of $\psi(u_i)$ with respect to the group presentation $G(\Gamma')$ defined in Section \ref{Introduction_section}. 
Note that the subgroup $\langle u_1, u_3 \rangle$ of $G(\iota(P_3))$ is isomorphic to $\mathbb{Z}^2$. 
Therefore $\# (\mathrm{supp}(\psi(u_1)) \cup \mathrm{supp}(\psi(u_3))) \geq 2$. 
Pick a pair of vertices $v_1 \in \mathrm{supp}(\psi(u_1))$ and $v_3 \in \mathrm{supp}(\psi(u_3))$ with $v_1 \neq v_3$. 
Since $\Gamma'$ is connected, there is an edge-path $P$ in $\Gamma'$ connecting $v_1$ with $v_3$. 
The fact that $\psi(u_1)$ and $\psi(u_3)$ are commutative, together with (KK) condition of $\psi$,  implies that any pair of vertices in $ \mathrm{supp}(\psi(u_1)) \cup \mathrm{supp}(\psi(u_3))$ do not span an edge in $\Gamma'$. 
Hence, the edge-path $P$ must contains at least three vertices. 
Then we have a sub-path $(v_1', v_2', v_3') $ such that $v_i' \in \mathrm{supp}(\psi(u_i))$ with $\{v_1', v_2' \},  \{v_2', v_3' \} \in E(\Gamma')$. 
Note that the vertices $v_1'$ and $v_3'$ are commutative in $G(\Gamma')$, because the vertices $u_1$ and $u_3$ are commutative in $G(\iota(P_3))$ (see \cite[Lemma 2.2]{Katayama17-2}). 
Hence, the endpoints $v_1'$ and $v_3'$ of $P$ do not span an edge in $\Gamma'$, and therefore $(v_1', v_2', v_3')$ induces $P_3$ in $\Gamma'$. 
Thus, by setting $\tilde{\iota} \colon P_3 \rightarrow \Gamma' ; u_i \mapsto v_i'$ we have a desired lift of $\iota$.  

Case $n > 3$. 
As in the proof of case $n=3$, by taking a sufficiently high power of $\psi$, we have an embedding $\psi \colon G(\Lambda) \hookrightarrow G(\Gamma)$ with condition (KK). 
By applying \cite[Lemma 4.2]{Katayama17-2}, we have that there is a full embedding $\tilde{\iota} \colon P_n \rightarrow \Gamma$ such that $\iota = \phi \circ \tilde{\iota}$. 
\end{proof}

Cases $n=1$ and $n=2$ in the above lemma say that the multi-valued homomorphism induced by an embedding of a right-angled Artin group is surjective with respect to the vertex set and the edge set. 

\begin{corollary}
Suppose that $\chi(S_{g, p}) < 0$. 
If $G(P_n) \hookrightarrow \mathrm{Mod}(S_{g, p})$, then $P_n \leq \bar{\mathcal{C}}(S_{g, p})$. 
\end{corollary}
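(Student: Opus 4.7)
The plan is to chain together the three main results of Section~\ref{path_lifting_section} to extract an induced copy of $P_n$ inside $\bar{\mathcal{C}}(S_{g,p})$ from a given embedding $G(P_n) \hookrightarrow \mathrm{Mod}(S_{g,p})$. Since $\chi(S_{g,p}) < 0$, Theorem~\ref{Kim-Koberda_obst} applies: I will first replace the given embedding with an embedding $\psi \colon G(P_n) \hookrightarrow \mathrm{Mod}(S_{g,p})$ satisfying condition (KK), so that each vertex of $P_n$ is sent to a product of mutually commuting Dehn twists with nonzero exponents.

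Next, I apply Proposition~\ref{hom_induces_mv} to $\psi$. This produces a finite induced subgraph $\Gamma$ of $\bar{\mathcal{C}}(S_{g,p})$, whose vertices correspond to the closed curves supporting the Dehn twists appearing in the expressions $\psi(u)$ for $u \in V(P_n)$, together with a multi-valued homomorphism $\phi \colon \Gamma \looparrowright P_n$ sending each Dehn-twist generator to the corresponding vertex of $P_n$.

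The last step is to invoke Proposition~\ref{path-lifting_lemma} (the path-lifting lemma) on the identity map $\iota = \mathrm{id}_{P_n} \colon P_n \to P_n$, which is trivially a full embedding of $P_n$ into $P_n = \Lambda$. The proposition then supplies a full embedding $\tilde{\iota} \colon P_n \to \Gamma$ with $\iota = \phi \circ \tilde{\iota}$. Since $\Gamma$ is, by construction, an induced subgraph of $\bar{\mathcal{C}}(S_{g,p})$, and $\tilde{\iota}(P_n)$ is an induced subgraph of $\Gamma$, we conclude $P_n \leq \bar{\mathcal{C}}(S_{g,p})$, as desired.

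There is essentially no obstacle left: all of the hard work has been absorbed into Theorem~\ref{Kim-Koberda_obst}, Proposition~\ref{hom_induces_mv}, and Proposition~\ref{path-lifting_lemma}. The only small point to check is that Proposition~\ref{path-lifting_lemma} is applicable here, which it is because its hypotheses require only a finite induced subgraph $\Gamma$ of $\bar{\mathcal{C}}(S_{g,p})$, a multi-valued homomorphism $\phi \colon \Gamma \looparrowright \Lambda$ coming from an embedding with condition (KK), and a full embedding of $P_n$ into $\Lambda = P_n$ — all of which are in place.
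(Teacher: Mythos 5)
Your proposal is correct and follows essentially the same route as the paper: pass to a (KK) embedding via Theorem~\ref{Kim-Koberda_obst}, obtain the multi-valued homomorphism $\phi \colon \Gamma \looparrowright P_n$ from Proposition~\ref{hom_induces_mv}, and lift the identity full embedding of $P_n$ through Proposition~\ref{path-lifting_lemma} to land an induced $P_n$ inside $\bar{\mathcal{C}}(S_{g,p})$. Your write-up just makes explicit the choice $\iota = \mathrm{id}_{P_n}$ and the transitivity of induced subgraphs, which the paper leaves implicit.
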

\begin{proof}
Suppose that $G(P_n) \hookrightarrow \mathrm{Mod}(S_{g, p})$. 
Then, by Theorem \ref{Kim-Koberda_obst}, we have an embedding $\psi \colon G(P_n) \hookrightarrow \mathrm{Mod}(S_{g, p})$ with condition (KK). 
Proposition \ref{hom_induces_mv} implies that $\psi$ induces a multi-valued homomorphism from a finite induced subgraph $\Gamma$ of $\bar{\mathcal{C}}(S_{g, p})$ to $P_n$. 
Proposition \ref{path-lifting_lemma} now completes this proof. 
\end{proof}

\section{Proofs of theorems \ref{mcg_raag_for_path_graph} and \ref{braid_group_raag} \label{simplest_section}}
We note that even if we use punctures instead of boundary components, the results in Section~\ref{linear_chain_section} also hold. 
Hence we will apply the results in Section~\ref{linear_chain_section} for the orientable surfaces of genus $g$ with $p$ punctures.

We first prove Theorem \ref{mcg_raag_for_path_graph}.

\begin{proof}[Proof of Theorem  \ref{mcg_raag_for_path_graph}] 
Suppose that $S_{g, p}$ is a sphere with $\leq 3$ punctures. 
Then the mapping class group is finite, and therefore embedded right-angled Artin group must be trivial. 

Case $(g, p) \in \{(0, 4), (1, 0), (1,1)\}$. 
Note that $G(P_2) \cong F_2$ (the free group of rank $2$). 
We have an embedding $F_2 \leq \mathbb{Z}/2\mathbb{Z} * \mathbb{Z} / 3 \mathbb{Z} \cong B_3 / \mathbb{Z} \leq \mathrm{Mod}(S_{0, 4})$. 
Moreover, $\mathrm{SL}(2, \mathbb{Z}) = \mathrm{Mod}(S_{1, 0}) \cong \mathrm{Mod}(S_{1, 1})$ contains a subgroup isomorphic to $F_2$. 
Thus in any case we have $F_2 \leq \mathrm{Mod}(S_{g, p})$. 
We next prove that $G(P_m) \leq \mathrm{Mod}(S_{g, p})$ only if $m \leq 2$. 
Note that $\mathrm{Mod}(S_{g, p})$ do not have a subgroup isomorphic to $\mathbb{Z}^2$. 
This shows that $G(P_m) \leq \mathrm{Mod}(S_{g, p})$ implies $m \leq 2$, because $G(P_m)$ contains a subgroup isomorphic to $\mathbb{Z}^2$ when $m \geq 3$. 

The other cases: combine Proposition \ref{path-lifting_lemma} and Theorem \ref{main_thm_linear_chain}. 
\end{proof}

\begin{proposition}\label{mcg_for_raag_cyclic}
$G(C_m) \leq \mathrm{Mod}(S_{g, p})$ if and only if $m$ satisfies: 
\begin{eqnarray*}
m \leq \left\{ \begin{array}{ll}
p & (g=0, p \geq 5) \\
2g+2  & (g \geq 2, p=0)  \\
\end{array} \right.
\end{eqnarray*}
\end{proposition}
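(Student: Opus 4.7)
The plan is to prove the proposition by two separate arguments: for necessity, reduce to the path-graph case (Theorem~\ref{mcg_raag_for_path_graph}); for sufficiency, exhibit $C_m$ as an induced subgraph of $\bar{\mathcal{C}}(S_{g,p})$ and apply Koberda's embedding theorem.

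For necessity, I observe that for every $m\geq 3$, deleting any single vertex of $C_m$ yields an induced copy of $P_{m-1}$, and consequently $G(P_{m-1})\leq G(C_m)$. Hence if $G(C_m)\leq \mathrm{Mod}(S_{g,p})$ then $G(P_{m-1})\leq \mathrm{Mod}(S_{g,p})$, and Theorem~\ref{mcg_raag_for_path_graph} forces $m-1\leq p-1$ when $g=0$, $p\geq 5$, and $m-1\leq 2g+1$ when $g\geq 2$, $p=0$. These inequalities are exactly the claimed bounds $m\leq p$ and $m\leq 2g+2$.

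For sufficiency, it is enough (by Koberda's theorem, noting $\chi(S_{g,p})<0$ in both cases) to realize $C_m$ as an induced subgraph of $\bar{\mathcal{C}}(S_{g,p})$ for each $m$ in the admissible range, i.e.\ to produce $m$ pairwise non-isotopic essential simple closed curves $\alpha_1,\dots,\alpha_m$ on $S_{g,p}$ such that $i(\alpha_i,\alpha_{i\pm 1})>0$ (cyclically) and $\alpha_i,\alpha_j$ are disjoint whenever $i,j$ are non-consecutive. For $g=0$ and $p\geq 5$ I would pick $m$ of the punctures, cyclically label them $q_1,\dots,q_m$, and let $\alpha_i$ bound a disk whose enclosed punctures are exactly the pair $\{q_i,q_{i+1}\}$: consecutive $\alpha_i$ share a puncture and must intersect minimally in two points, non-consecutive $\alpha_i,\alpha_j$ enclose disjoint pairs and can be drawn disjointly, and the enclosed pairs are all distinct so the $\alpha_i$ are pairwise non-isotopic.

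For $g\geq 2$ and $p=0$, in the maximal case $m=2g+2$ I would use the hyperelliptic picture: viewing $S_{g,0}$ as the double cover of $S^2$ branched over $2g+2$ marked points and lifting a cyclic system of arcs joining consecutive branch points produces the required cyclic configuration of $2g+2$ curves. For $3\leq m\leq 2g+1$, I would take a linear chain $\alpha_1,\dots,\alpha_{m-1}$ of length $m-1$ (which exists by Theorem~\ref{main_thm_linear_chain} since $m-1\leq 2g+1=\ell(S_{g,0})$) and close it by a single essential simple closed curve $\alpha_m$ meeting only the two endpoints of the chain. The main obstacle is this last step: verifying, for every $m$ in the range, that the closing curve can be routed to intersect precisely $\alpha_1$ and $\alpha_{m-1}$ minimally while avoiding $\alpha_2,\dots,\alpha_{m-2}$. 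This should reduce to a direct topological check inside a regular neighbourhood of the chain, using that for $m\leq 2g+2$ the chain can be arranged inside a proper subsurface of $S_{g,0}$ so that the complementary handles leave enough room to draw $\alpha_m$.
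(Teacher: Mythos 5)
Your argument is essentially the paper's: the ``only if'' direction is verbatim the paper's proof (delete a vertex of $C_m$ to get an induced $P_{m-1}$, so $G(P_{m-1})\leq \mathrm{Mod}(S_{g,p})$, and Theorem~\ref{mcg_raag_for_path_graph} gives $m-1\leq p-1$, resp.\ $m-1\leq 2g+1$), and the ``if'' direction is the same strategy the paper uses, namely exhibiting $C_m$ as an induced subgraph of $\bar{\mathcal{C}}(S_{g,p})$ and invoking Koberda's theorem; the paper does this by pointing to Figure~\ref{cc_gn}, while you spell out the curve configurations. Your punctured-sphere configuration is fine for every $3\leq m\leq p$. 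The one step you flag but do not carry out --- closing a length-$(m-1)$ linear chain on $S_{g,0}$ by a single curve for $3\leq m\leq 2g+1$ --- is indeed the only soft spot, and note that an abstract chain supplied by Theorem~\ref{main_thm_linear_chain} may fill the surface, so ``enough room in the complementary handles'' needs an explicit model rather than a general principle. The cleanest fix is to make your own hyperelliptic construction uniform in $m$: for any $3\leq m\leq 2g+2$ choose $m$ of the $2g+2$ branch points and an embedded circle in the sphere passing through exactly these points and missing the others; the preimages of its $m$ subarcs under the branched double cover are $m$ essential simple closed curves in which consecutive ones meet in a single point (the fiber over the shared branch point, hence minimal position and non-isotopic) and non-consecutive ones are disjoint, giving an induced $C_m$ in $\bar{\mathcal{C}}(S_{g,0})$ for all admissible $m$ at once and making the chain-closing argument unnecessary.
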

\begin{proof}
Koberda's embedding theorem \cite[Theorem 1.1]{Koberda12} together with Figure \ref{cc_gn}
shows that ``if part" of the assertion. 
\begin{figure}
\centering
\includegraphics[clip, scale=0.35]{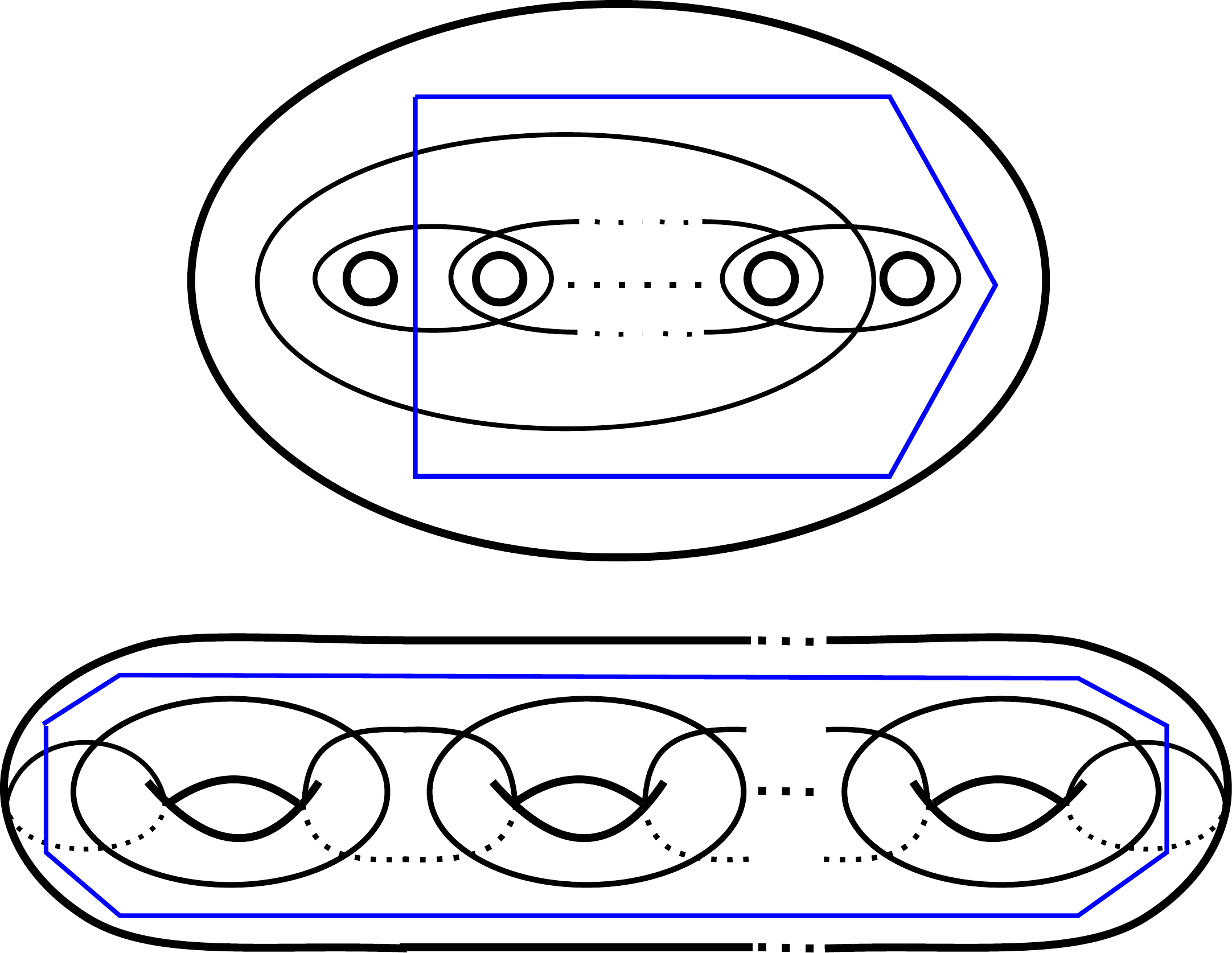}
\caption{$C_{p} \leq \bar{\mathcal{C}}(S_{0}^{p}) = \bar{\mathcal{C}}(S_{0, p})$ and $C_{2g+2} \leq \bar{\mathcal{C}}(S_{g, 0})$. }
\label{cc_gn}
\end{figure}
Let us prove ``only if part". 
Suppose that $G(C_m) \leq \mathrm{Mod}(S_{g, p})$. 
Since $P_{m-1} \leq C_m$, we have $G(P_{m-1}) \leq G(C_m)$. 
Hence, $G(P_{m-1}) \leq \mathrm{Mod}(S_{g, p})$. 
By Theorem \ref{main_thm_linear_chain}, we have either $m-1 \leq p-1$ (if $g=0, \ p\geq 5$) or $m-1 \leq 2g + 1$ (if $g \geq 2, p=0$). 
In either case, we obtain the required result. 
\end{proof}

By $\mathrm{PMod}(S_{g, p})$, we denote the {\it pure mapping class group} of $S_{g, p}$, which is the kernel of the natural homomorphism $\mathrm{Mod}(S_{g, p}) \rightarrow \mathfrak{S}_{p}$. 
Here, $\mathfrak{S}_{p}$ is the permutation group on the set of $p$ punctures $P= \{ \infty_1, \infty_2, \ldots, \infty_p \}$ and the natural homomorphism is induced by the action of $\mathrm{Mod}(S_{g, p})$ to $P$, $\mathrm{Mod}(S_{g, p}) \times P \ni ([f], \infty_i) \mapsto f(\infty_i) \in P$. 
By the definition, the pure mapping class group is a finite index subgroup of the mapping class group. 
We now turn to consider embeddings of $G(P_m)$ and $G(C_m)$ into braid groups.

\begin{lemma}
Let $G$ be a centerless right-angled Artin group. 
Then $G \leq B_p$ if and only if $G \leq \mathrm{Mod}(S_{0, p+1})$. 
\label{braid_mcg_lemma}
\end{lemma}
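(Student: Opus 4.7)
The plan is to exploit the capping-off surjection
$$\pi\colon B_p = \mathrm{Mod}(S_{0,p}^1) \twoheadrightarrow \mathrm{Mod}(S_{0,p+1})$$
obtained by gluing a once-punctured disk to the boundary of $S_{0,p}^1$. Its kernel is generated by the Dehn twist along a boundary-parallel curve, which is the full twist generating $Z(B_p)\cong \mathbb{Z}$.

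For the forward direction, assume $G \leq B_p$. Any $z \in G\cap Z(B_p)$ commutes with every element of $B_p$, in particular with every element of $G$, and so lies in $Z(G)=\{1\}$ by the centerless hypothesis. Hence $\pi$ restricts to an injection on $G$, yielding $G \leq \mathrm{Mod}(S_{0,p+1})$.

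For the backward direction, I would first dispose of the cases $p\leq 2$ trivially: there $\mathrm{Mod}(S_{0,p+1})$ is finite or infinite cyclic, so the only centerless right-angled Artin subgroup is the trivial one. Assuming $p\geq 3$, so that $\chi(S_{0,p+1})<0$, Theorem~\ref{Kim-Koberda_obst} produces an embedding $\psi\colon G\hookrightarrow \mathrm{Mod}(S_{0,p+1})$ whose vertex images are products of commuting Dehn twists. Each essential simple closed curve $\alpha\subset S_{0,p+1}$ is isotopic to a curve $\tilde\alpha\subset S_{0,p}^1$ (push it off the capped puncture), and $\tilde\alpha$ is essential and not boundary-parallel in $S_{0,p}^1$ since essentiality of $\alpha$ in $S_{0,p+1}$ rules out bounding a once-punctured disk around the capped puncture. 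Disjoint curves on $S_{0,p+1}$ admit simultaneously disjoint lifts, so commuting Dehn twists lift to commuting Dehn twists in $B_p$. Replacing each Dehn twist appearing in $\psi$ by its lift yields a homomorphism $\tilde\psi\colon G\to B_p$ satisfying $\pi\circ \tilde\psi=\psi$, which forces $\tilde\psi$ to be injective.

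The main obstacle will be confirming that $\tilde\psi$ is a well-defined homomorphism; this reduces to checking that commutation relations are preserved when lifting, i.e.\ that a disjoint family of essential curves on $S_{0,p+1}$ admits a simultaneously disjoint essential representative in $S_{0,p}^1$. This is a routine isotopy argument using the inclusion $S_{0,p}^1\subset S_{0,p+1}$, and once it is in place the rest of the proof is formal.
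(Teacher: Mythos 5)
Your argument is correct in outline, but it takes a genuinely different route from the paper's. The paper's proof is purely group-theoretic and very short: it invokes the Clay--Leininger--Margalit isomorphism $PB_p \cong \mathrm{PMod}(S_{0, p+1}) \times \mathbb{Z}$, uses Lemma~\ref{power-homomorphism_injective}~(2) to pass from $B_p$ to $PB_p$ (resp.\ from $\mathrm{Mod}(S_{0,p+1})$ to $\mathrm{PMod}(S_{0,p+1})$), and then the forward direction is "kill the central $\mathbb{Z}$ factor using centerlessness" while the backward direction is immediate because $\mathrm{PMod}(S_{0,p+1})$ is literally a direct factor of $PB_p$, hence a subgroup of $B_p$ --- no topology and no use of condition (KK) at all. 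Your forward direction is the same idea in different clothing (the capping kernel is central, so it meets $G$ trivially), and your backward direction replaces the direct-factor observation by a heavier but valid lifting argument: normalize via Theorem~\ref{Kim-Koberda_obst}, push the curve system off a disk around the capped puncture, and lift twists through the capping map. What your route buys is independence from the Clay--Leininger--Margalit splitting; what it costs is the extra verification you defer, which is not quite as "routine" as stated: you need that when two generators commute, the \emph{entire} curve systems of their (KK)-images can be realized pairwise disjointly, which rests on the fact that nonzero powers of Dehn twists (multitwists) commute only if the underlying curves are disjoint --- exactly the content of Proposition~\ref{hom_induces_mv}, which you should cite rather than treat as immediate. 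Two harmless misstatements to fix: the capping map $\pi\colon B_p \to \mathrm{Mod}(S_{0,p+1})$ is not surjective (its image is the stabilizer of the capped puncture), but you never use surjectivity; and its kernel equals $Z(B_p)$ only for $p\geq 3$ (for $p=2$ it is the index-two subgroup generated by the full twist), but your argument only needs the kernel to be \emph{contained} in the center, which holds for all $p$.
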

\begin{proof}
By a result due to Clay--Leininger--Margalit \cite[Theorem 10]{Clay-Leininger-Margalit14}, we have $PB_p \cong \mathrm{PMod}(S_{0, p+1}) \times \mathbb{Z}$. 
Suppose that $G \leq B_p$. 
Then, since $PB_p$ is a finite index subgroup of $B_p$, we have $G \leq PB_p$ by Lemma \ref{power-homomorphism_injective} (2). 
Moreover, since $G$ is centerless, the natural homomorphism $G \rightarrow PB_p / \mathbb{Z} \cong \mathrm{PMod}(S_{0, p+1})$ is an embedding. 
Hence, $G \leq \mathrm{Mod}(S_{0, p+1})$. 

Suppose that $G \leq \mathrm{Mod}(S_{0, p+1})$. 
Then, since $\mathrm{PMod}(S_{0, p+1})$ is a finite index subgroup of $\mathrm{Mod}(S_{0, p+1})$, we have $G \leq \mathrm{PMod}(S_{0, p+1}) \leq PB_p \leq B_p$ by Lemma \ref{power-homomorphism_injective} (2). 
\end{proof}

We now turn to prove Theorem \ref{braid_group_raag}.

\begin{proof}[Proof of Theorem \ref{braid_group_raag}]

(1) Case $p=1$. 
Since $B_1 = 1$, embedded right-angled Artin group must be $G(\emptyset) = 1$. 

Case $p=2$. 
Since $B_2 \cong \mathbb{Z}$, embedded right-angled Artin group must be infinite cyclic. 

Case $p \geq 3$. 
Since the defining graph of any right-angled Artin group with non-trivial center contains an isolated vertex (a vertex non-adjacent to the other vertices) \cite[Lemma 5.1]{Behrstock-Charney12}, the right-angled Artin group $G(P_{m})$ ($m \geq 2$) is centerless. 
By Theorem \ref{mcg_raag_for_path_graph} and Lemma \ref{braid_mcg_lemma}, we obtain the desired result. 

(2) Case $p \leq 2$. 
$B_p$ does not contain a free group of rank $\geq 2$. 
On the other hand, $G(C_m)$ ($m \geq 3$) contains a free group of rank $\geq 2$. 

Case $p=3$. 
$B_3$ is isomorphic to the trefoil knot group. 
\cite[Theorem 1.4 (2)]{Katayama16} gives a classification of right-angled Artin groups embedded in the torus knot groups. 
According to this result, $G(C_3) \cong \langle v_1, v_2, v_3 \mid - \rangle \cong A(V_3)$ is embedded in the trefoil knot group, though $G(C_m)$ ($m \geq 4$) is not embedded in $B_3$. 

Case $p \geq 4$. 
Note that $G(C_m)$ ($m \geq 3$) is centerless. 
An argument similar as in the proof of (1) together with Proposition \ref{mcg_for_raag_cyclic} shows the assertion. 

\end{proof}

\section{Virtual embeddability between mapping class groups \label{virtual_emb_mcg_section}}
In this last section, we introduce some applications of Theorem \ref{mcg_raag_for_path_graph} and \ref{braid_group_raag} to embeddings between finite index subgroups of mapping class groups. 
Recall that $\xi(S_{g, p})$ is the topological complexity of $S_{g, p}$, which is the maximum rank of  the free abelian subgroups in $\mathrm{Mod}(S_{g, p})$ and the quantity $\ell(S_{g, p}) = \ell(S_{g}^{p})$ is the maximum length of the linear chains on $S_{g, p}$, which is computed in Section \ref{linear_chain_section}.

\begin{proof}[Proof of Theorem \ref{for_Birman-Hilden}] 
(1) Suppose that the braid group $B_{2g+1}$ is virtually embedded into $\mathrm{Mod}(S_{g', 0})$. 

Case $g=0$ and $g=1$. 
If $g=0$, then $g'$ always satisfies the desired inequality $g' \geq g$. 
Hence, we assume that $g = 1$. 
Then $B_3$ contains the infinite cyclic subgroup, and hence no finite index subgroup of $B_3$ is embedded in $\mathrm{Mod}(S_{0, 0})$. 
Thus $g' \geq 1$. 

Case $g \geq 2$. 
Let $H$ be a finite index subgroup of $B_{2g+1}$ which is contained in $\mathrm{Mod}(S_{g', 0})$. 
By Theorem \ref{braid_group_raag} (1) and Lemma \ref{power-homomorphism_injective}, $B_{2g+1}$ contains $G(P_{2g+1})$ and therefore $H$ also contains $G(P_{2g+1})$. 
Hence, $G(P_{2g+1}) \leq \mathrm{Mod}(S_{g', 0})$. 
By Theorem \ref{mcg_raag_for_path_graph}, the maximum $m$ such that $G(P_{m}) \leq \mathrm{Mod}(S_{g', 0})$ is equal to one of the following: $0$ (if $g'=0$), $2$ (if $g'= 1$) and $2g'+1$ (if $g' \geq 2$). 
Hence, $g \geq 2$ implies $g' \geq 2$. 
Thus we have $g' \geq g$, as desired.

(2) Suppose that $B_{2g+1}$ is virtually embedded into  $\mathrm{Mod}(S_{g', 0}^{1})$. 

Case $g=0$. 
The inequality $0 \leq g'$ always holds. 

Case $g=1$. 
By Lemma \ref{power-homomorphism_injective}, any finite index subgroup $H$ of $B_3$ contains a free group of rank $2$, and hence $H$ is not embedded into $\mathrm{Mod}(S_{0, 0}^{1}) \cong \mathbb{Z}$. 
Hence, $1 \leq g'$. 

Case $g \geq 2$. 
Suppose that $B_{2g+1}$ is virtually embedded in $\mathrm{Mod}(S_{g', 0}^{1})$. 
By Theorem~\ref{braid_group_raag} (1), we see $G(P_{2g+1}) \leq B_{2g+1}$. 
Hence, $G(P_{2g+1}) \leq \mathrm{Mod}(S_{g', 0}^{1})$ by Lemma \ref{power-homomorphism_injective}. 
Since the kernel of the capping homomorphism 
$$\mathrm{Mod}(S_{g', 0}^{1}) \rightarrow \mathrm{Mod}(S_{g', 1})$$
is the center generated by the Dehn twist along a closed curve which is isotopic into the boundary \cite[Proposition 3.19]{Farb-Margalit12}, and since $G(P_{2g+1})$ is centerless, we have that $G(P_{2g+1}) \leq \mathrm{Mod}(S_{g', 1})$. 
By Theorem \ref{mcg_raag_for_path_graph}, the maximum $m$ such that $G(P_{2g+1}) \leq \mathrm{Mod}(S_{g', 1})$ is one of the following: $0$ (if $g'=0$), $2$ (if $g'= 1$) and $2g'+2$ (if $g' \geq 2$). 
Hence, our assumption $g \geq 2$ implies $g' \geq 2$, and so we have $2g+1 \leq 2g' + 2$. 
Thus we obtain $g \leq g'$, as desired. 

(3) and (4) can be treated similarly. 
\end{proof}

\begin{remark}\label{for_Birman-Hilden_remark}
Note that $B_{2g+1}$ contains $\mathbb{Z}^{2g}$ as a subgroup. 
On the other hand, the maximum rank of the free abelian subgroup of $\mathrm{Mod}(S_{1, 0})$ (resp.\ $\mathrm{Mod}(S_{2,0})$) is $1$ (resp.\ $3$). 
Therefore no finite index subgroup of $B_{3}$ (resp.\ $B_{5}$) is embedded in $\mathrm{Mod}(S_{1, 0})$ (resp.\ $\mathrm{Mod}(S_{2, 0})$). 
Thus the inequality $g' \geq g+1$ holds in the assertion of Theorem \ref{for_Birman-Hilden} (1) when $g=1$ or $2$. 
The authors anticipate that no finite index subgroup of $B_{2g+1}$ is embedded in $\mathrm{Mod}(S_{g', 0})$ for all $g$ and $g'$ with $1 \leq g' \leq g$. 
\end{remark}

Now, Corollary \ref{rigid_mcg} can be deduced from the following theorem.

\begin{theorem}
Suppose that $\chi(S_{g, p})<0$. 
If a finite index subgroup of $\mathrm{Mod}(S_{g, p})$ is embedded in $\mathrm{Mod}(S_{g', p'})$, then the following inequalities hold: 
\begin{enumerate}
 \item[(1)] $\xi(S_{g, p}) \leq \xi(S_{g', p'})$, 
 \item[(2)] $\ell(S_{g, p}) \leq \ell(S_{g', p'})$. 
\end{enumerate}
\label{rigid_mcg_main}
\end{theorem}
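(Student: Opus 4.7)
The plan is to extract, from $\mathrm{Mod}(S_{g,p})$, a witness subgroup for each invariant, thin it so that it lands in the finite index subgroup $H \leq \mathrm{Mod}(S_{g,p})$, transport it via the embedding $H \hookrightarrow \mathrm{Mod}(S_{g',p'})$, and then read off the desired bound from the corresponding invariant of $S_{g',p'}$. Both inequalities will follow this template, the only difference being the choice of witness subgroup and of the external bound used at the end.

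For (1) the plan is to apply Birman--Lubotzky--McCarthy's theorem twice. First I would fix a pants decomposition of $S_{g,p}$ with $\xi(S_{g,p})$ curves; the Dehn twists along these curves generate a free abelian group $A \cong \mathbb{Z}^{\xi(S_{g,p})}$ in $\mathrm{Mod}(S_{g,p})$. Since $H$ has finite index in $\mathrm{Mod}(S_{g,p})$, the intersection $A \cap H$ has finite index in $A$ and is therefore again free abelian of rank $\xi(S_{g,p})$. Transporting it through the embedding exhibits a copy of $\mathbb{Z}^{\xi(S_{g,p})}$ inside $\mathrm{Mod}(S_{g',p'})$, and the Birman--Lubotzky--McCarthy upper bound applied at $(g',p')$ then gives $\xi(S_{g,p}) \leq \xi(S_{g',p'})$.

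For (2) I would use Theorem \ref{mcg_raag_for_path_graph} together with Lemma \ref{power-homomorphism_injective}(2) in place of Birman--Lubotzky--McCarthy. A direct case-by-case comparison of the piecewise formula in Theorem \ref{main_thm_linear_chain} with the threshold in Theorem \ref{mcg_raag_for_path_graph} shows that, on every orientable surface with $\chi < 0$, the maximal $m$ with $G(P_m) \leq \mathrm{Mod}(S_{g,p})$ is exactly $\ell(S_{g,p})$. In particular $G(P_{\ell(S_{g,p})}) \leq \mathrm{Mod}(S_{g,p})$, and Lemma \ref{power-homomorphism_injective}(2) promotes this to $G(P_{\ell(S_{g,p})}) \leq H$. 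Composing with the embedding $H \hookrightarrow \mathrm{Mod}(S_{g',p'})$ and reading off the threshold at $(g',p')$ from Theorem \ref{mcg_raag_for_path_graph} yields $\ell(S_{g,p}) \leq \ell(S_{g',p'})$.

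I do not anticipate a substantial obstacle: the proof is essentially a packaging of material already developed in the paper. The only point needing a little care is the tabulation that identifies $\ell(S_{g,p})$ with the threshold in Theorem \ref{mcg_raag_for_path_graph} across all the piecewise cases, including the exceptional surfaces $S_{0,4}$, $S_{1,0}$, $S_{1,1}$ and the spheres with few punctures; once that identification is recorded, both bounds fall out of the same finite-index transfer argument.
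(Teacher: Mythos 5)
Your proposal is correct and follows essentially the same route as the paper: exhibit $G(P_{\ell(S_{g,p})})$ (resp.\ $\mathbb{Z}^{\xi(S_{g,p})}$) in $\mathrm{Mod}(S_{g,p})$, pass to the finite index subgroup, transport through the embedding, and read off the bound from Theorem \ref{mcg_raag_for_path_graph} (resp.\ Birman--Lubotzky--McCarthy). The only cosmetic difference is in (1), where you intersect the twist group on a pants decomposition with $H$ instead of invoking Lemma \ref{power-homomorphism_injective} (2) as the paper does; both steps are valid and equivalent.
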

\begin{proof}
Suppose that a finite index subgroup $H$ of $\mathrm{Mod}(S_{g, p})$ is embedded in $\mathrm{Mod}(S_{g', p'})$. 
By Theorem \ref{mcg_raag_for_path_graph} (resp.~Birman--Lubotzky--McCarty's result), we can see that $G(P_{\ell(S_{g, p})}) \leq \mathrm{Mod}(S_{g, p})$ (resp.~$\mathbb{Z}^{\xi(S_{g, p})} \leq \mathrm{Mod}(S_{g, p})$). 
Since $H$ is of finite index, we have that $G(P_{\ell(S_{g, p})}) \leq H$ (resp.~$\mathbb{Z}^{\xi(S_{g, p})} \leq H$) by Lemma \ref{power-homomorphism_injective}. 
Hence, $G(P_{\ell(S_{g, p})}) \leq H' \leq \mathrm{Mod}(S_{g', p'})$ (resp.~$\mathbb{Z}^{\xi(S_{g, p})} \leq H' \leq  \mathrm{Mod}(S_{g', p'})$). 
Thus, by Theorem \ref{mcg_raag_for_path_graph} (resp.~Birman--Lubotzky--McCarty's theorem), the desired inequality (2) (resp.~(1)) holds. 
\end{proof}

We finish this paper by proving Theorem \ref{virtual_emb_sphere_closed_surf}.

\begin{proof}[Proof of Theorem \ref{virtual_emb_sphere_closed_surf}] 
Suppose that $p \leq 2g+2$. 
We will prove that $\mathrm{Mod}(S_{0, p})$ is virtually embedded in $\mathrm{Mod}(S_{g, 0})$. 

Case $p \leq 2g+1$. 
We first observe that $B_{2g}$ is embedded in $\mathrm{Mod}(S_{g, 0})$. 
By a theorem due to Birman--Hilden, we can show that $B_{2g}$ is embedded in $\mathrm{Mod}(S_{g-1, 0}^{2})$ as the symmetric subgroup $\mathrm{SMod}(S_{g-1, 0}^{2})$ with respect to a hyper-elliptic involution (see \cite[Chapter 9.4]{Farb-Margalit12}). 
Moreover, the natural surface embedding $j \colon S_{g-1, 0}^{2} \rightarrow S_{g, 0}$, which is obtained by gluing a cylinder to $S_{g-1, 0}^{2}$ along the boundary, induces a homomorphism $j_{*} \colon \mathrm{Mod}(S_{g-1, 0}^{2}) \rightarrow \mathrm{Mod}(S_{g, 0})$.  
A result due to Paris--Rolfsen \cite[Theorem 4.1]{Paris-Rolfsen00} implies that the kernel of $j_{*}$ is generated by $[T_{\beta_1}] [T_{\beta_2}]^{-1}$, where $\beta_1$ and $\beta_2$ are the non-isotopic closed curves parallel to the boundary components of  $S_{g-1, 0}^{2}$ that co-bound a cylinder in $S_{g, 0}$. 
Since $\langle [T_{\beta_1}] [T_{\beta_2}]^{-1} \rangle \cap \mathrm{SMod}(S_{g-1, 0}^{2}) = 1$, we have an embedding $\mathrm{SMod}(S_{g-1, 0}^{2}) \hookrightarrow \mathrm{Mod}(S_{g, 0})$ by restricting $j_{*}$. 
Hence, $B_{2g} \cong \mathrm{SMod}(S_{g-1, 0}^{2}) \hookrightarrow \mathrm{Mod}(S_{g, 0})$. 
Since $B_{2g}$ contains $\mathrm{PMod}(S_{0, p})$, the mapping class group $\mathrm{Mod}(S_{0, p})$ is virtually embedded in $\mathrm{Mod}(S_{g, 0})$.  

Case $p=2g+2$. 
In this case, the symmetric subgroup $\mathrm{SMod}(S_{g, 0})$ of $\mathrm{Mod}(S_{g, 0})$ with respect to a hyper-elliptic involution $\iota$ has a projection onto $\mathrm{Mod}(S_{0, 2g+2})$ with the kernel $\langle \iota \rangle$ (see \cite[Chapter 9.4]{Farb-Margalit12}). 
Consider a finite index subgroup $H$ of $\mathrm{Mod}(S_{g, 0})$ which does not contain $\iota$. 
Residual finiteness of $\mathrm{Mod}(S_{g, 0})$ guarantees the existence of such $H$. 
Then $H \cap \mathrm{SMod}(S_{g, 0})$ is a finite index subgroup of $\mathrm{SMod}(S_{g, 0})$ avoiding $\iota$. 
Hence, $H \cap \mathrm{SMod}(S_{g, 0})$ is embedded in  $\mathrm{Mod}(S_{0, 2g+2})$ as a finite index subgroup. 
Consequently, we have that $\mathrm{Mod}(S_{0, 2g+2})$ is virtually embedded in $\mathrm{Mod}(S_{g, 0})$. 

We next suppose that $\mathrm{Mod}(S_{0, p})$ is virtually embedded in $\mathrm{Mod}(S_{g, 0})$. 
Then, since $\mathrm{Mod}(S_{0, p})$ contains the right-angled Artin group $G(P_{p-1})$ by Theorem \ref{mcg_raag_for_path_graph}, the mapping class group $\mathrm{Mod}(S_{g, 0})$ also contains $G(P_{p-1})$. 
This fact together with Theorem \ref{mcg_raag_for_path_graph} implies that $p-1 \leq 2g+1$, and therefore we have the desired inequality $p \leq 2g+2$. 
\end{proof}


\end{document}